\definecolor{UBCblue}{rgb}{0.04706, 0.13725, 0.26667}
\definecolor{bistre}{rgb}{0.24, 0.17, 0.12}
\definecolor{bulgarianrose}{rgb}{0.28, 0.02, 0.03}
\tikzstyle{vertex}=[circle, draw, inner sep=0pt, minimum size=6pt]
\newcommand{\K} {{\mathcal{K}}}
\newcommand{\X}{\mathcal{X}}
\newcommand{\I}{{I}}
\newcommand{\norm}[1]{\left\lVert #1\right\rVert}
\newcommand{\inp}[2]{\left\langle {#1} ~ ,\,{#2} \right\rangle}
\numberwithin{equation}{section}
\newtheorem{theorem}{\bf Theorem}[section]
\newtheorem{proposition}{\bf Proposition}[section]
\newtheorem{lemma}{\bf Lemma}[section]
\newcommand\Item[1][]{%
	\ifx\relax#1\relax  \item \else \item[#1] \fi
	\abovedisplayskip=0pt\abovedisplayshortskip=0pt~\vspace*{-\baselineskip}}
\definecolor{black}{rgb}{0.0, 0.0, 0.0}
\titleformat{\section}{\color{bulgarianrose}\normalfont\Large\bfseries \filcenter}{}{0em}{\thesection\hspace*{4.3mm}}
\titleformat{\subsection}{\color{UBCblue}\normalfont\large\bfseries}{}{0em}{\thesubsection\hspace*{3.3mm}}
\titleformat*{\subsubsection}{\bfseries}
\theoremstyle{plain}
\numberwithin{equation}{section}
\newtheorem{Corollary}{\bf Corollary}[section]
\begin{document}
	
	%
	%
	%
	%
	%
	%
	%
	%
	%
	
	\title[{\small Asymptotic Error Analysis - Discrete Iterated Galerkin Solution}]{Asymptotic error analysis for the discrete iterated Galerkin solution of Urysohn integral equations with Green's kernels}
	\author[G. Rakshit]{Gobinda Rakshit}
	
	\address{%
		Department of Mathematical Sciences\\
		Rajiv Gandhi Institute of Petroleum Technology\\
		Jais Campus, Uttar Pradesh 229304\\
		India.\\
		ORCID iD : 0000-0002-5813-4656}
	\email{g.rakshit@rgipt.ac.in}
	%
	
	\subjclass{45G10, 65B05, 65J15, 65R20}
	\keywords{Urysohn integral operator, Green's kernel, Galerkin method, Nystr\"om approximation, Richardson extrapolation}
	\date{\today}
	\begin{abstract}
		Consider a Urysohn integral equation $x - \mathcal{K} (x) = f$, where $f$ and the integral operator $\mathcal{K}$ with kernel of the type of Green's function are given. In the computation of approximate solutions of the given integral equation by Galerkin method, all the integrals are needed to be evaluated by some numerical integration formula. This gives rise to the discrete version of the Galerkin method. For $r \geq 1$, a space of piecewise polynomials of degree $\leq r-1$ with respect to a uniform partition is chosen to be the approximating space. For the appropriate choice of a numerical integration formula, an asymptotic series expansion of the discrete iterated Galerkin solution is obtained at the above partition points. Richardson extrapolation is used to improve the order of convergence. Using this method we can restore the rate of convergence when the error is measured in the continuous case. A numerical example is given to illustrate this theory. 
	\end{abstract}
	\label{page:firstblob}
	\maketitle
	
	\section{Introduction}\label{sec1}
	
	Let $\X = L^{\infty}[0, 1]$. Consider the problem of solving Urysohn integral equation  
	\begin{align}\label{eq:main}
		x(s)- \int_{0}^{1} \kappa(s, t, x(t)) \: dt = f(s), \hspace{0.7cm} ~ s\in [0,1],
	\end{align}
	where $f \in \X$ and $\kappa \in C\left([0, 1] \times [0, 1] \times \mathbb{R}\right)$ are given. Let the Urysohn integral operator $\mathcal{K} :  L^{\infty}[0,1] \rightarrow C[0,1]$ be defined by
	\begin{equation}\label{eq:1.1}
		\mathcal{K}({x})(s)= \int_{0}^{1} \kappa(s, t, x(t)) ~ dt, \hspace{0.7cm} x \in \X, ~ s\in [0,1].
	\end{equation}
	Since the kernel $\kappa$ is continuous, $\mathcal{K}$ is compact operator on $\X$. Denoting the equation \eqref{eq:main} by
	\begin{equation}\label{eqn:main}
		x - \mathcal{K} x = f.
	\end{equation}
	We assume that the above equation has a solution, say $\varphi$. We also assume that $\mathcal{K}$ is twice Frech\'et differentiable and $ 1$ is not an eigenvalue of the compact linear operator $\mathcal{K}'{(\varphi)}.$ This gives us that $\varphi$ is an isolated solution of \eqref{eqn:main}. See \cite{Kra}, \cite{KraZ}. We are looking for Galerkin approximations of $\varphi$.
	
	For $ r \geq 1$, consider the approximating space $ \mathcal{X}_n$ as a 
	space of piecewise polynomials of degree $ \leq r-1 $ with respect to a 
	uniform partition, say $\Delta^{(n)}$, of $ [0, 1]$ with $n$ subintervals each of length $ {h = \frac {1} {n} }.$ Let $\pi_n$ be the restriction to $L^\infty [0, 1]$ of the orthogonal projection from $L^2 [0, 1]$ to $\mathcal{X}_n.$ Then the Galerkin solution $\varphi_n^G$ satisfies the following integral equation
	\begin{equation*}
		\varphi_n^G - \pi_n \mathcal {K} (\varphi_n^G)=\pi_n f. 
	\end{equation*}
	Galerkin method for Urysohn integral equation has been studied extensively in research literature. See \cite{Atk-Pot}, \cite{Kra}, \cite{KraV}, \cite{KraZ}. The iterated Galerkin solution is defined by
	$$\varphi_n^S = \mathcal{K} (\varphi_n^G) + f. $$
	In \cite{Atk-Pot}, the following orders of convergence are also obtained. 
	\begin{equation}\nonumber
		\| \varphi_n^G - \varphi \|_\infty = O \left( h \right), \;\;\; 
		\| \varphi_n^S  - \varphi \|_\infty = O \left( h^{2} \right), \quad \text{ if } r = 1, \end{equation} 
	and
	\begin{equation}\nonumber
		\| \varphi_n^G - \varphi \|_\infty = O \left( h^{  r} \right), \;\;\; 
		\| \varphi_n^S  - \varphi \|_\infty = O \left(h^{ r+2 }\right), \quad \text{if } r \geq 2.
	\end{equation}
	It is also shown that the order of convergence of $\varphi_n^S$ at the points of partition $\Delta^{(n)}$, is $h^{2r}$.
	
	If an asymptotic expansion for the error exists, one can apply a well-known techniques to obtain more accurate approximations. Richardson extrapolation one such method for application. In \cite{rakshit2021richardson}, an asymptotic expansion for the iterated Galerkin solution of Urysohn integral equation with Green's function type of kernel, is obtained at the above mentioned partition points. Then, by \cite{ford2000asymptotic} and using Richardson extrapolation, an approximate solution with order of convergence $h^{2r+2}$ can be obtained.

	In the computation of  of above approximations, various integrals are involved. There is an integral in the definition of the Urysohn integral operator $\mathcal{K}$. In the definition of the orthogonal projection $\pi_n$, the standard inner product on $L^2[0, 1]$ comes into picture. In practice, it is necessary to replace all these integrals by a numerical quadrature formula. This gives rise to the discrete versions of the projection methods. The discrete versions of the Galerkin methods for Urysohn integral with Green's kernel, are investigated in \cite{Atk-Pot3}, \cite{Atk-Pot2}. Whereas, in \cite{RPK-GR3}, a different version of discrete projection method is discussed. 
	
	In this article, we consider the Urysohn integral equation with Green's kernel, and discrete Galerkin method is applied for approximations. Then, an asymptotic expansion for the discrete iterated Galerkin solution is obtained.
	
	We choose a fine partition of $[0, 1]$ with $m$ subintervals each of length $\tilde{h} = \frac{1}{m}$ and define a composite  numerical quadrature formula. Replacing the integrals in the definition of $\mathcal{K}$ and $\pi_n$, we define the Nystr\"om operator $\mathcal{K}_m$ and the discrete orthogonal projection $P_n$. Then the discrete Galerkin and the discrete iterated Galerkin equations are given by $$z_n^G - P_n \mathcal{K}_m (z_n^G) = P_n f  ~~~ \text{ and }~~~ z_n^S - \mathcal{K}_m(P_n z_n^S) = f$$ respectively. 
	If $\varphi \in C^{r+2}[0, 1]$, then from \cite{Atk-Pot3} and \cite{RPK-GR3}, we have
	\begin{equation}\label{eq:discrete_Gal}
		\norm{z_n^G - \varphi}_\infty = O\left( \max\left\{ h^{r}, \tilde{h}^2 \right\}  \right)		
	\end{equation}
	\begin{equation}\label{eq:discrete_it_Gal}
		\norm{z_n^S - \varphi}_\infty = \left\{ {\begin{array}{ll}
				O \left(\max\left\{ h^{2}, \tilde{h}^2 \right\}\right), ~\;\;\;  r = 1,   \vspace*{1.5mm}\\
				O \left(\max\left\{ h^{r+2}, \tilde{h}^2 \right\}\right), ~  r \geq 2.
		\end{array}}\right.
	\end{equation}
	
	In this article, first we find an asymptotic error expansion due to the discrete orthogonal projection. Then using this, the following asymptotic expansion is obtained:
	\begin{equation}\label{eq:final}
		z_n^S(t_i) = \varphi(t_i) + \gamma(t_i) h^{2r} +  O \left(\max\left\{ h^{2r+2}, \tilde{h}^2 \right\}\right),
	\end{equation}
	where the function $\gamma$ is independent of $h$. If we choose $m$ such that $\tilde{h} \leq h^{2r+2}$, then using the Richardson extrapolation, an approximation of $\varphi$ of the order of $h^{2r+2}$ could be obtained. See \cite{ford2000asymptotic}. 
	
	This article is organized as follows. Definitions, notations and some preliminary results are given in section 2. In Section 3, a quadrature rule is defined, and using it the discrete orthogonal projection and the Nystr\"om approximations of the integral operators are defined. Section 4 contains the asymptotic error analysis for the approximations. Numerical example is given in Section 5.
	
	\section{Preliminaries}
	
	For an integer $\alpha \geq 0$, let $ C^{\alpha}[0, 1]$ denotes the space of all real valued $\alpha$-times continuously differentiable functions on $[0, 1]$ with the norm 
	$$\norm{x}_{\alpha, \infty} = \max_{0 \leq j \leq \alpha} \norm{x^{(j)}}_\infty,$$ where $x^{(j)}$ is the $j^{\text{th}}$ derivative of the function $x$, and {\small $\displaystyle{\norm{x^{(j)}}_\infty = \sup_{0 \leq t \leq 1} \lvert x^{(j)}(t) \rvert}$}. 
	Define
	{\small \begin{equation*}
			\norm{\kappa}_{\alpha, \infty} = \max_{0 \leq i+j+k \leq \alpha} \norm{D^{(i, j, k)}\kappa(s, t, u)}_\infty,
		\end{equation*}
		where
		$$D^{(i, j, k)}\kappa(s, t, u) = \frac{\partial^{i+j+k}  \kappa}{\partial s^i \partial t^j \partial u^k}(s, t, u).$$}
	
	\subsection*{Green's function type kernel}\label{subsection:2.1}

	Let $r \geq 1$   be an integer and assume that the kernel $\kappa$ has the following properties.
	
	\begin{enumerate}
		\item For $i = 1, 2, 3, 4$, the functions $\kappa, \frac { \partial^i \kappa} {\partial u^i} \in C ( \Omega),$
		where $C ( \Omega)$ denotes the space of all real valued continuous function on $\Omega = [0, 1] \times [0, 1] \times \mathbb{R}$.
		
		\item Let
		$ \Omega_1 = \{ (s, t, u): 0 \leq t \leq s \leq 1, \; u \in \mathbb{R} \}$ and
		$\Omega_2 = \{ (s, t, u): 0 \leq s \leq t \leq 1, \; u \in \mathbb{R} \}.$
		There are two functions $\kappa_j \in C^{r} ( \Omega_j ), j = 1, 2, $ such that
		\begin{equation*}
			\kappa (s,t, u) = \left\{ {\begin{array}{ll}
					\kappa_1 (s, t, u), \;\;\; (s, t, u) \in \Omega_1,   \\
					\kappa_2 (s, t, u), \;\;\; (s, t, u) \in \Omega_2.
			\end{array}}\right.
		\end{equation*}
		\item Denote
		$ \ell (s, t, u) = \frac {\partial \kappa  } { \partial u}( s, t, u) $ and $ \lambda (s, t, u) = \frac {\partial^2 \kappa } { \partial u^2}( s, t, u), \:(s, t, u) \in \Omega.$ The partial derivatives of $\ell (s, t, u)$ and $\lambda (s, t, u)$ with respect to $s$ and $t$ have jump discontinuities on $s = t$.
		
		\item There are functions $\ell_j, \lambda_j \in C^{r}  ( \Omega_j ), j = 1, 2, $ with
		{\footnotesize	\begin{equation*}
				\ell (s,t, u) = \left\{ {\begin{array}{ll}
						\ell_1 (s, t, u), \;\;\; (s, t, u) \in \Omega_1,   \\
						\ell_2 (s, t, u), \;\;\; (s, t, u) \in \Omega_2,
				\end{array}}\right. 
				\hspace*{0.6mm}	\lambda (s,t, u) = \left\{ {\begin{array}{ll}
						\lambda_1 (s, t, u), \;\;\; (s, t, u) \in \Omega_1,   \\
						\lambda_2 (s, t, u), \;\;\; (s, t, u) \in \Omega_2.
				\end{array}}\right.
		\end{equation*}}
		\end {enumerate}

		Under the above assumptions, the operator $\mathcal {K}$ is four times Fr\'echet differentiable, and its Fr\'echet derivatives at $x \in \mathcal{X}$ are given by 
		$$ \mathcal {K}'(x) v_1 (s) = \int_0^1 \frac {\partial \kappa } {\partial u} \left(s,t, x(t)\right) \:
		v_1(t) \: dt, $$
		\begin{equation*}
			\mathcal {K}^{(i)}(x) (v_1,\ldots, v_i) (s) = \int_0^1 \frac {\partial^i \kappa } {\partial u^i} \left(s,t,x(t)\right) \:
			v_1(t) \cdots v_i(t) \: dt, \quad i = 2, 3, 4,
		\end{equation*}
		where 
		\begin{align*}
			\frac {\partial^i \kappa } {\partial u^i} \left(s,t,x(t)\right) = \frac {\partial^i \kappa } {\partial u^i} \left(s,t,u\right)\rvert_{u = x(t)}, \quad i = 1, 2, 3, 4
		\end{align*} and $v_1, v_2, v_3, v_4 \in \mathcal{X}$. Note that $\mathcal{K}' (x) : \mathcal{X} \rightarrow \mathcal{X}$ is linear and $ \mathcal{K}^{(i)}(x) : \mathcal{X}^i \rightarrow \mathcal{X} $ are multi-linear operators, where $\mathcal{X}^i$ is the cartesian product of $i$ copies of $\mathcal{X}$. See \cite{Rall}. The norms of these operators are defined by $	\norm{\mathcal {K}^{(i)}(x) } = \sup \left\{ \norm{\mathcal {K}^{(i)}(x) (v_1, \ldots, v_i)}_\infty :   \norm{v_j}_\infty \leq 1, j = 1, \ldots, i \right\} $ for $i = 1, 2, 3, 4$. It follows that
		\begin{eqnarray}\nonumber
			\norm{\mathcal {K}^{(i)}(x) } &\leq& \sup_{0 \leq s, t \leq 1} \left\lvert \frac {\partial^i \kappa } {\partial u^i} \left(s, t, x(t)\right) \right\rvert, \qquad i = 1, 2, 3, 4.
		\end{eqnarray}
		Note that, if $ f \in C^{\alpha} [0, 1]$ for any positive integer $\alpha$, then $\varphi \in C^{\alpha} [0, 1]$. See \cite[Corollary 3.2]{Atk-Pot}, \cite[Corollary 4.2]{Atk-Pot3}.
		
		\section{Discretization of Integrals by numerical quadrature Rule}	
		In this section, first we consider a numerical integration formula. We replace the integral in the standard inner product of $L^2[0, 1]$ (i.e. $ \inp{x}{y} = \int_{0}^{1} x(t) y(t) \:dt $) by the quadrature rule and define a discrete inner product. Subsequently, the corresponding discrete orthogonal projection is defined. After that, an asymptotic error expansion for the discrete orthogonal projection is obtained. Next we define the Nystr\"om approximations of the integral operator $\mathcal{K}$ and its Fr\'echet derivatives.

		Consider a basic numerical integration formula by
		\begin{equation}\label{eq:Num_int}
			\int_0^1 x (t) d t \approx \sum_{q=1}^{\rho} w_q \: x(\mu_q),
		\end{equation}
		which is exact at least for polynomials of degree $\leq 3r. $ 
		If $r = 0,$ then it is assumed that the quadrature rule is exact atleast for linear polynomials. It follows that $\sum_{q=1}^{\rho} w_q = 1$.

		Let $n \in \mathbb{N}$ and consider the following uniform partition of $[0, 1]:$
		\begin{equation}\label{eq:partition1}
			\Delta^{(n)} ~ : \qquad 0  <  \frac{1} {n}  < \cdots <   \frac{n-1} {n}   <  1.
		\end{equation}
		Define
		$ 	t_j = \frac {j} {n}, \; \Delta_j = [t_{j-1}, t_j] $ and $h = t_{j} - t_{j-1} = \frac {1} {n}, \; j = 1, \ldots, n.$ Define the subspace $C^{\alpha}_{\Delta^{(n)}}[0, 1] = \left\{ x \in \mathcal{X} : x \in C^{\alpha}[t_{j-1}, t_j], ~ j = 1, 2, 3, \dots, n \right\}.$
		For $r \geq 1,$ the approximating space $$\mathcal{X}_n = \left\{ x \in \X : x\vert_{\Delta_j} \text{ is a polynomial of degree } \leq r-1 \right\}.$$ Let $p$ be a positive integer and $m = p n$. Consider the following uniform partition of $[0, 1]:$
		\begin{equation}\label{eq:fine_part}
			\Delta^{(m)} ~ : \qquad	0  <  \frac{1} {m}  < \cdots <   \frac{m-1} {m}   <  1.
		\end{equation}
		Let 
		$\displaystyle { \tilde{h} = \frac {1} {m} \;\;\; \mbox {and} \;\;\; s_i = \frac {i} {m},  \;\;\; i = 0, \ldots, m.}$ \\
		Note : As our goal to find the equation \eqref{eq:final}, where the higher order term is $\max\left\{ {h^{2r+2}, \tilde{h}^2} \right\}$, we choose the partition $\Delta^{(m)}$ such that $\tilde{h}^2 \leq h^r$.
		
		A composite integration rule with respect to the partition  (\ref{eq:fine_part}) 
		is then defined as
		\begin{eqnarray*}
			\int_0^1 x(t) \: d t &=& \sum_{i =1}^m \int_{s_{i -1}}^{s_i} x (t) \: d t 
			\approx  \tilde h \sum_{i=1}^m  \sum_{q =1}^{\rho} w_q \: x (s_{i -1} + \mu_q \tilde{h} ).
		\end{eqnarray*}
		Thus,
		\begin{eqnarray}\nonumber
			\int_{t_{j -1}}^{t_j} x (t) \: d t  = 	\int_{(j-1)h}^{jh} x(t) \:  d t   =   \sum_{\nu =1}^p 	\int_{(j-1)h + (\nu-1)\tilde{h}}^{(j-1)h + \nu \tilde{h}} x (t) \: d t.
		\end{eqnarray}
		Since $h = p \tilde{h}$,
		\begin{eqnarray*}
			\int_{t_{j -1}}^{t_j} x (t) d t  & = & \sum_{\nu =1}^p 	\int_{\frac{(j-1)p + \nu-1}{p}h}^{\frac{(j-1)p + \nu}{p}h} x(t) \: dt.
		\end{eqnarray*}
		Substituting $	t  \: = \:   \frac{(j-1)p + \nu-1}{p}h + \tilde{h} \sigma \: = \: \frac{(j-1)p + \nu-1 + \sigma}{p} h $ in the above equation, we obtain
		\begin{eqnarray*}
			\int_{t_{j -1}}^{t_j}  x(t) \: d t  & = &  \frac{h}{p} \: \sum_{\nu =1}^p 	\int_{0}^{1}  x {\left( \frac{(j-1)p + \nu-1 + \sigma}{p} h \right)} \: d \sigma \nonumber\\
			& = &  \frac{h}{p} \: \sum_{\nu =1}^p 	\int_{0}^{1} x{ \left( t_{j-1} +\frac{\nu - 1 + \sigma}{p} h \right)} \: d \sigma.
		\end{eqnarray*}
		Note that $\frac{\nu - 1 + \sigma}{p} \in [0, 1].$
		Now using the numerical quadrature formula \eqref{eq:Num_int}, we obtain
		\begin{eqnarray*}
			\int_{t_{j -1}}^{t_j}  x(t) \: d t  & \approx &  \frac{h}{p} \: \sum_{\nu =1}^p 	\sum_{q =1}^\rho w_q \:  x{\left( t_{j-1} + \frac{\nu-1 + \mu_q}{p} h \right)}.
		\end{eqnarray*}
		Let $$ \mu_{q \nu} = \frac{\nu-1 + \mu_q}{p}, \quad q = 1, 2, \ldots, \rho; ~ \nu = 1, 2, \ldots, p. $$
		Then,
		\begin{eqnarray}\label{eq:Comp_Num_int}
			\int_{t_{j -1}}^{t_j}  x(t) \: d t  & \approx &  \frac{h}{p} \: \sum_{\nu =1}^p 	\sum_{q =1}^\rho w_q \:  x{\left( t_{j-1} + \mu_{q \nu} h \right)}.
		\end{eqnarray}
		We prove the following lemma which will be used to find an asymptotic error expansion for the discrete orthogonal projection.
		\begin{lemma}\label{lem:1}
			Let $L_{\eta}$ be the Legendre polynomial of degree $\eta \in \left\{ 0, 1, \dots, r-1 \right\}$ defined on $[0, 1]$. Then for any $k = 1, 2, \dots, 2r-1$,
			\begin{eqnarray}\nonumber
				\frac{1}{p}  \: \sum_{\eta = 0}^{r-1} \sum_{\nu =1}^p 	\sum_{q =1}^\rho w_q \: L_{\eta}(\mu_{q \nu})  L_{\eta}(\tau) \frac{\left( \mu_{q \nu} - \tau  \right)^{k}}{k!} \: = \: \int_{0}^{1} \Lambda_{r}(\tau, s) \frac{\left( s - \tau  \right)^{k}}{k!} \: ds,
			\end{eqnarray}
			where $\displaystyle{ \sum_{\eta = 0}^{r-1} L_{\eta}(\tau) L_{\eta}(s) \: = \: \Lambda_r(\tau, s)}$, for $\tau, s \in [0, 1]$.
		\end{lemma}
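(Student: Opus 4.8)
\emph{Proof proposal.} The whole identity should follow from one structural fact, so the plan is to isolate that fact first: the \emph{composite} quadrature rule associated with the fine partition $\Delta^{(m)}$ is exact for every polynomial of degree $\leq 3r$, and the right-hand side of the asserted equality is precisely the exact integral over $[0,1]$ of a polynomial of degree $\leq 3r$ in the variable $s$. Once both observations are in place the lemma is immediate.

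First I would write down the composite rule on $[0,1]$ itself. Repeating on $[0,1]$ the substitutions that produced \eqref{eq:Comp_Num_int} — now with $p$ subintervals of length $1/p$ — gives
\begin{equation*}
	\int_0^1 x(s)\,ds \;=\; \frac{1}{p}\sum_{\nu=1}^p \int_0^1 x\!\left(\frac{\nu-1+\sigma}{p}\right)d\sigma .
\end{equation*}
For each $\nu$ the inner integrand is an affine reparametrization of $x$, hence a polynomial in $\sigma$ of the same degree as $x$; so if $\deg x \leq 3r$ the basic rule \eqref{eq:Num_int} integrates it exactly, and summing over $\nu$ (recall $\mu_{q\nu}=(\nu-1+\mu_q)/p$) yields
\begin{equation*}
	\int_0^1 x(s)\,ds \;=\; \frac{1}{p}\sum_{\nu=1}^p\sum_{q=1}^\rho w_q\, x(\mu_{q\nu}), \qquad \deg x \leq 3r .
\end{equation*}

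Next I would fix $\tau\in[0,1]$, fix $k\in\{1,\dots,2r-1\}$, and apply the displayed exactness to $g(s) = \Lambda_r(\tau,s)\,(s-\tau)^k/k! = \sum_{\eta=0}^{r-1} L_\eta(\tau)L_\eta(s)(s-\tau)^k/k!$. Since $\deg L_\eta \leq r-1$ and $k \leq 2r-1$, we have $\deg_s g \leq (r-1)+(2r-1)=3r-2\leq 3r$; applying exactness to $g$ and then interchanging the finite sum over $\eta$ with the sums over $\nu$ and $q$ (pulling the factors $L_\eta(\tau)$ out of both) produces exactly the claimed equation.

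The argument is short, and I do not expect a genuine obstacle; the one place to be careful is the degree bookkeeping $\deg_s g \le 3r-2$, which is precisely where the hypotheses $\deg L_\eta\le r-1$ and $k\le 2r-1$ enter, and which shows the range of $k$ in the statement is the largest for which this proof works against a $3r$-exact basic rule. A secondary routine point is verifying that exactness up to degree $3r$ survives the change of variables $s\mapsto(\nu-1+\sigma)/p$ used to pass from the basic rule to the composite rule — but this is automatic, since affine substitutions preserve polynomial degree.
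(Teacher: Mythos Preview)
Your proposal is correct and follows essentially the same approach as the paper: both arguments rest on the observation that the composite rule $\frac{1}{p}\sum_{\nu,q} w_q\, x(\mu_{q\nu})$ integrates polynomials of degree $\leq 3r$ exactly (via the affine substitution $s\mapsto(\nu-1+\sigma)/p$ and exactness of the basic rule), and then apply this to the polynomial $L_\eta(s)(s-\tau)^k$ of degree $\leq 3r-2$. The only cosmetic difference is that the paper works term-by-term in $\eta$ and sums at the end, whereas you apply exactness directly to the full integrand $g(s)=\Lambda_r(\tau,s)(s-\tau)^k/k!$ and then unpack the sums.
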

		\begin{proof}
			Since $L_{\eta}$ is a polynomial of degree $0 \leq \eta \leq r-1$, 
			\begin{align*}
				\frac{1}{p}  \: \sum_{\nu =1}^p 	\sum_{q =1}^\rho w_q \:  L_\eta {\left(  \frac{\nu-1 + \mu_q}{p} \right)} & ~ = ~	\frac{1}{p}  \: \sum_{\nu =1}^p \int_{0}^{1} L_\eta {\left(  \frac{\nu-1 + t}{p} \right)} \: dt \\
				& = ~ \sum_{\nu =1}^p \int_{\frac{\nu-1}{p}}^{\frac{\nu}{p}} L_\eta(s) \: ds \\
				& = ~ \int_{0}^{1} L_\eta(s) \: ds.
			\end{align*}
			Since the basic quadrature formula \eqref{eq:Num_int} is exact for polynomials of degree $\leq 3r$,
			\begin{align*}
				\frac{1}{p}  \: \sum_{\nu =1}^p 	\sum_{q =1}^\rho w_q \:  L_\eta {\left(  \frac{\nu-1 + \mu_q}{p} \right)}  \left(  \frac{\nu-1 + \mu_q}{p} - \tau \right)^k ~ = ~ \int_{0}^{1} L_\eta(s) \left(  s - \tau \right)^k \: ds.
			\end{align*}
			It follows that
			\begin{align*}
				\frac{1}{p}  \: \sum_{\nu =1}^p 	\sum_{q =1}^\rho w_q \:  L_\eta {(\mu_{q \nu})} \: \frac{\left( \mu_{q \nu} - \tau \right)^k}{k!}  ~ = ~ \int_{0}^{1} L_\eta(s) \: \frac{\left(  s - \tau \right)^k}{k!}  \: ds,
			\end{align*}
			where $\displaystyle{\mu_{q \nu} = \frac{\nu-1 + \mu_q}{p}}.$ This gives
			{\small \begin{align*}
					\frac{1}{p}  \: \sum_{\eta = 0}^{r-1} \sum_{\nu =1}^p 	\sum_{q =1}^\rho w_q \: L_\eta(\tau) L_\eta(\mu_{q \nu}) \: \frac{\left( \mu_{q \nu} - \tau \right)^k}{k!} ~ = ~  \sum_{\eta = 0}^{r-1} L_\eta(\tau) \int_{0}^{1} L_\eta(s) \: \frac{\left(  s - \tau \right)^k}{k!} \: ds.
			\end{align*}}
			Let $$ \sum_{\eta = 0}^{r-1} L_{\eta}(\tau) L_{\eta}(s) \: = \: \Lambda_r(\tau, s), \quad \tau, s \in [0, 1].$$
			Then,
			\begin{align*}
				\frac{1}{p}  \: \sum_{\eta = 0}^{r-1} \sum_{\nu =1}^p 	\sum_{q =1}^\rho w_q \: L_\eta(\tau) L_\eta(\mu_{q \nu}) \: \frac{\left( \mu_{q \nu} - \tau \right)^k}{k!} ~ = ~   \int_{0}^{1} \Lambda_r(\tau, s) \: \frac{\left(  s - \tau \right)^k}{k!} \: ds.
			\end{align*}
			Hence the required result follows.
		\end{proof}
		
		\subsection{Discrete Orthogonal Projection}
		Let $j \in \left\{ 1, 2, \dots, n\right\}$ and $x, y \in C (\Delta_j)$. Define a discrete inner product on $\Delta_j$ by
		\begin{equation}\label{eq:dis_ip}
			\inp {x} {y}_{\Delta_j, m} = \tilde {h} \sum_{\nu = 1}^p \sum_{q = 1}^{\rho}   w_q \: x{\left (t_{j-1} + \mu_{q \nu} h \right )} \: y{\left( t_{j-1} + \mu_{q \nu} h \right )}.
		\end{equation}
		Note that, this is an indefinite inner product. For more details on indefinite inner product spaces, see \cite{Bognar}. However, the properties which we need to define a discrete orthogonal projection, hold true for \eqref{eq:dis_ip}.
		For $\eta = 0, 1, \ldots, r-1,$ let $L_\eta $ denote the Legendre polynomial of degree $\eta$ on $[0, 1].$ 
		For $j = 2, \ldots, n,$ and for  $\eta = 0, 1, \ldots, r-1,$ define
		\begin{eqnarray}\nonumber
			\varphi_{j, \eta} (t) &=& \left\{ {\begin{array}{ll}
					\sqrt {\frac {1} { h}} L_\eta \left ( \frac { t - t_{j-1}} {h} \right ), \;\;\;  t \in (t_{j-1}, t_j],   \\
					0, \;\;\;  \mbox{otherwise} 
			\end{array}}\right. 
		\end{eqnarray}
		and, $\varphi_{1, \eta} (t) = \sqrt {\frac {1} { h}} L_\eta \left ( \frac {t  - t_0} {h} \right ) $ if $t \in [t_{0}, t_1]$ and  
		$0$ otherwise. 
		Note that
		\begin{equation}\label{eqn:6.10}
			\varphi_{j, \eta} \left( t_{j-1} + \mu_{q \nu} h \right) = h^{- \frac{1}{2}} L_\eta(\mu_{q \nu}) \quad \text{ for all } j = 1, 2, \dots, n.
		\end{equation}
		Note that $\displaystyle {\{\varphi_{j, \eta} : ~  j = 1, \ldots, n, \; \eta = 0, 1,  \ldots, r-1 \}}$ be a set of orthonormal basis for  $\mathcal{X}_n,$ where $\varphi_{j, \eta}$ is the Legendre polynomial of degree $\eta$ defined on $[t_{j-1}, t_j]$. Since the basic numerical integration \eqref{eq:Num_int} has degree of precision $3r$, the set $\left\{\varphi_{j, \eta} \right\}$ is also orthonormal with respect to the discrete inner product \eqref{eq:dis_ip}. Let $\mathcal{P}_{r, \Delta_j}$ be the space of polynomials of degree $\leq r-1$ on $\Delta_j$.	Define the discrete orthogonal projection $P_{n,j}: C [t_{j-1}, t_j] \rightarrow \mathcal{P}_{r, \Delta_j}$ as follows:
		\begin{equation}\label{eq:discrete_proj1}
			P_{n, j} x = \sum_{\eta = 0}^{r-1} \inp {x} {\varphi_{j, \eta}}_{\Delta_j} \varphi_{j, \eta}. 
		\end{equation}
		See \cite{Atk-Pot2}, \cite{Atk-Pot3} for more details.
		A discrete orthogonal projection $P_n:  C[0, 1] \rightarrow \mathcal{X}_n$ is defined by
		\begin{eqnarray}\label{dop}
			P_n x = \sum_{j=1}^n P_{n, j} x.
		\end{eqnarray}
		It follows that $P_n x(t) = P_{n, j} x(t),$ for all $t \in [t_{j-1}, t_j]$. We also have the following error bound:\\
		$\norm{P_n} < \infty$ and also, if $x \in C^{r}[t_{j-1}, t_j]$, then
		\begin{equation}\label{proj:error1}
			\norm{x - P_{n, j}x}_{\Delta_j, \infty} \leq C_1 \norm{x^{(r)}}_{\Delta_j, \infty} h^r,
		\end{equation}
		if $x \in C^{r}[0, 1]$, then
		\begin{equation}\label{proj:error}
			\norm{x - P_{n}x}_{\Delta_j, \infty} \leq C_1 \norm{x^{(r)}}_{\infty} h^r,
		\end{equation}
		where $\displaystyle{\norm{x}_{\Delta_j, \infty} = \sup_{t \in [t_{j-1}, t_j]} \vert  x(t) \vert}$ and, $C_1$ is a constant independent of $h$. For details see \cite{RPK-GR3}. 
		
		In \eqref{proj:error} we have a error bound for the discrete orthogonal projection. But, by the following lemma we obtain an asymptotic error expansion for the discrete orthogonal projection, which is more stronger result than \eqref{proj:error}.

		\begin{lemma}\label{lem:2}
			Let $P_n$ be the discrete orthogonal projection defined by \eqref{eq:discrete_proj1} - \eqref{dop}. Let $\displaystyle{x \in C^{2r+2}_{\Delta^{(n)}}[0, 1]}$ and $t = t_{j-1} + \tau h$ with $\tau \in [0, 1]$. Then
			\begin{equation}\nonumber
				P_n x (t) - x(t) = \: \sum_{k = 1}^{2r+1} \: J_k(\tau) \: x^{(k)}(t_{j-1}+\tau h) \: h^k ~ + O\left( h^{2r+2} \right),
			\end{equation}
			where
			$\displaystyle{J_k(\tau) = \int_{0}^{1} \Lambda_{r}(\tau, s) \frac{\left( s - \tau  \right)^{k}}{k!} \: ds}, \quad k = 1, 2, \dots, 2r+1.$
		\end{lemma}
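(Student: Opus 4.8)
The plan is to localise the estimate to one subinterval $\Delta_j$, expand the nodal values of $x$ that occur in $P_n x$ into a Taylor polynomial centred at the point $t$ itself, and then read off the coefficients with the help of Lemma~\ref{lem:1}. Fix $j$ and write $t = t_{j-1}+\tau h$, $\tau\in[0,1]$. Since $P_n x(t) = P_{n,j}x(t)$ on $[t_{j-1},t_j]$, I would combine \eqref{eq:discrete_proj1}, the definition \eqref{eq:dis_ip} of the discrete inner product, the identity \eqref{eqn:6.10}, and $\tilde h = h/p$; the $\sqrt h$ factors then cancel and one is left with
\[
P_n x(t) \;=\; \frac{1}{p}\sum_{\eta=0}^{r-1}\sum_{\nu=1}^{p}\sum_{q=1}^{\rho} w_q\, L_\eta(\mu_{q\nu})\, L_\eta(\tau)\; x\!\left(t_{j-1}+\mu_{q\nu}h\right),
\]
i.e.\ a finite weighted average of nodal values of $x$ on $\Delta_j$.

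Next I would Taylor-expand each nodal value about $t$. Because $x\in C^{2r+2}_{\Delta^{(n)}}[0,1]$ it is $C^{2r+2}$ on the closed interval $[t_{j-1},t_j]$, and both $t_{j-1}+\mu_{q\nu}h$ and $t$ lie in that interval since $\mu_{q\nu},\tau\in[0,1]$; hence
\[
x\!\left(t_{j-1}+\mu_{q\nu}h\right) \;=\; \sum_{k=0}^{2r+1}\frac{x^{(k)}(t)}{k!}\,(\mu_{q\nu}-\tau)^k\,h^k \;+\; \rho_{q\nu},
\]
with $|\rho_{q\nu}| \le \bigl((2r+2)!\bigr)^{-1}\norm{x^{(2r+2)}}_{\Delta_j,\infty}\,h^{2r+2}$ uniformly in $q,\nu$ (using $|\mu_{q\nu}-\tau|\le 1$). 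Substituting this into the display above and interchanging the finite sums, the remainder terms contribute $\tfrac1p\sum_{\eta,\nu,q} w_q L_\eta(\mu_{q\nu})L_\eta(\tau)\rho_{q\nu}$, which is $O(h^{2r+2})$: the factor $1/p$ absorbs the $\nu$-sum, while $\sum_q|w_q|$, $\sup_{[0,1]}|L_\eta|$ and $\max_j\norm{x^{(2r+2)}}_{\Delta_j,\infty}$ are finite constants independent of $n$ (hence of $p$).

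What remains is to identify, for $0\le k\le 2r+1$, the coefficient
\[
c_k(\tau) \;=\; \frac1p\sum_{\eta=0}^{r-1}\sum_{\nu=1}^{p}\sum_{q=1}^{\rho} w_q\, L_\eta(\mu_{q\nu})\, L_\eta(\tau)\,\frac{(\mu_{q\nu}-\tau)^k}{k!}.
\]
For $k=0$ the first computation in the proof of Lemma~\ref{lem:1} gives $\tfrac1p\sum_{\nu,q} w_q L_\eta(\mu_{q\nu}) = \int_0^1 L_\eta(s)\,ds = \delta_{\eta 0}$, so $c_0(\tau)=L_0(\tau)=1$, which cancels $x(t)$ on the left. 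For $1\le k\le 2r+1$ the polynomial $L_\eta(s)(s-\tau)^k$ has degree $\eta+k\le (r-1)+(2r+1)=3r$, hence is integrated exactly by the basic rule \eqref{eq:Num_int}; the argument of Lemma~\ref{lem:1} then goes through verbatim (its stated restriction to $k\le 2r-1$ being only what is needed later) and yields $c_k(\tau)=\int_0^1\Lambda_r(\tau,s)\frac{(s-\tau)^k}{k!}\,ds = J_k(\tau)$. Collecting the powers of $h$ gives the asserted expansion.

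I expect the main obstacle to be the bookkeeping in that last step: one must verify that the degree of precision $3r$ of the basic quadrature \eqref{eq:Num_int} --- rather than merely $2r$ --- is exactly what forces $c_1,\dots,c_{2r+1}$ to equal $J_1,\dots,J_{2r+1}$ and not just some approximation of them, and one must check that the Taylor remainder is controlled uniformly over all $p\rho$ quadrature nodes of $\Delta_j$ and over all $j$ --- which is precisely where the hypothesis $x\in C^{2r+2}_{\Delta^{(n)}}[0,1]$ (and not merely $C^{2r}$) is used. The rest is routine manipulation of finite sums.
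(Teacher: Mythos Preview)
Your proposal is correct and follows essentially the same route as the paper: localise to $\Delta_j$, Taylor-expand the nodal values about $t$, and invoke Lemma~\ref{lem:1} to identify the coefficients. The paper handles the $k=0$ cancellation by first observing $P_{n,j}\mathbf{1}=\mathbf{1}$ and hence writing $P_{n,j}x(t)-x(t)$ directly as a weighted sum of the differences $x(t_{j-1}+\mu_{q\nu}h)-x(t)$, whereas you obtain the same thing by computing $c_0(\tau)=1$ explicitly; these are equivalent. Your remark that Lemma~\ref{lem:1} as stated only covers $k\le 2r-1$ but its proof extends verbatim to $k\le 2r+1$ because $\eta+k\le 3r$ is a point the paper leaves implicit, so you are in fact slightly more careful there.
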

		
		\begin{proof}
			
			Define a function $v_j : [t_{j-1}, t_j] \rightarrow \mathbb{R}$ by
			$$v_j(t) = 1 , \quad t \in [t_{j-1}, t_j].$$
			For $\tau \in [0, 1]$, let $t = t_{j-1} + h \tau$ $\in [t_{j-1}, t_j].$ From \eqref{eq:discrete_proj1} it is easy to see that 
			$$P_{n, j} v_j = v_j.$$
			It follows that
			\begin{equation}\nonumber
				\sum_{\eta = 0}^{r-1} \inp {v_j} {\varphi_{j, \eta}}_{\Delta_j} \varphi_{j, \eta}(t) = 1.
			\end{equation}
			Since $\varphi_{j, \eta}$ is a polynomial of degree $0 \leq \eta \leq r-1$ on $[t_{j-1}, t_j],$
			\begin{eqnarray}\nonumber
				\inp {v_j} {\varphi_{j, \eta}}_{\Delta_j} =	\int_{t_{j -1}}^{t_j}  \varphi_{j, \eta}(s) \: d s  & = &  \frac{h}{p} \: \sum_{\nu =1}^p 	\sum_{q =1}^\rho w_q \:  \varphi_{j, \eta}{\left( t_{j-1} + \mu_{q \nu} h \right)}.
			\end{eqnarray}
			Thus for any function $x : [t_{j-1}, t_j] \rightarrow \mathbb{R}$, we have
			\begin{equation}\nonumber
				x(t) =  x(t)\:	\frac{h}{p} \: \sum_{\eta = 0}^{r-1} \sum_{\nu =1}^p 	\sum_{q =1}^\rho w_q \:  \varphi_{j, \eta}{\left( t_{j-1} + \mu_{q \nu} h \right)} \:  \varphi_{j, \eta}(t). 
			\end{equation}
			It follows that
			\begin{multline}\nonumber
				P_{n, j}x(t) - x(t) \\ = \frac{h}{p} \: \sum_{\eta = 0}^{r-1} \sum_{\nu =1}^p 	\sum_{q =1}^\rho w_q \:  \varphi_{j, \eta}{\left( t_{j-1} + \mu_{q \nu} h \right)} \left[ x(t_{j-1} + \mu_{q \nu} h) - x(t) \right]\:  \varphi_{j, \eta}(t),
			\end{multline}
			where $t = t_{j-1} + h \tau$ $\in [t_{j-1}, t_j]$ and $\tau \in [0, 1]$. From \eqref{eqn:6.10}, we have
			\begin{equation}\label{eq:3.15}
				P_{n, j}x(t) - x(t)  = \frac{1}{p} \: \sum_{\eta = 0}^{r-1} \sum_{\nu =1}^p 	\sum_{q =1}^\rho w_q \:  L_\eta(\mu_{q \nu})  \left[ x(t_{j-1} + \mu_{q \nu} h) - x(t) \right] \: L_\eta(\tau).
			\end{equation}
			Since $x \in C^{2r+2}[t_{j-1}, t_j]$, using Taylor series expansion we obtain 
			\begin{equation*}
				x(t_{j-1} + \mu_{q \nu} h) - x(t_{j-1} + h \tau) = \sum_{k = 1}^{2r+1} x^{(k)}(t_{j-1}+\tau h) \: \frac{\left( \mu_{q \nu} - \tau \right)^k}{k!} \: h^k ~ + O\left( h^{2r+2} \right).
			\end{equation*}
			Thus
			\begin{align*}
				P_{n} & x(t) -  x(t) = P_{n, j}x(t) - x(t)  \\ & =   \sum_{k = 1}^{2r+1} \: x^{(k)}(t_{j-1}+\tau h) \: h^k  \left\{ \frac{1}{p}  \sum_{\eta = 0}^{r-1} \sum_{\nu =1}^p 	\sum_{q =1}^\rho  w_q   L_\eta(\mu_{q \nu})   L_\eta(\tau)    \frac{\left( \mu_{q \nu} - \tau \right)^k}{k!}  \right\}  \\ & ~ +  O\left( h^{2r+2} \right).
			\end{align*}
			Let $$J_k(\tau) = \frac{1}{p} \: \sum_{\eta = 0}^{r-1} \sum_{\nu =1}^p 	\sum_{q =1}^\rho  w_q \:  L_\eta(\mu_{q \nu})  \: L_\eta(\tau)   \: \frac{\left( \mu_{q \nu} - \tau \right)^k}{k!}, \quad \tau \in [0, 1].$$  
			By Lemma \ref{lem:1}, we can write
			$J_k(\tau) = \int_{0}^{1} \Lambda_{r}(\tau, s) \frac{\left( s - \tau  \right)^{k}}{k!} \: ds. $ 
			Hence
			\begin{equation}\nonumber
				P_n x (t) - x(t) = \: \sum_{k = 1}^{2r+1} \: J_k(\tau) \: x^{(k)}(t_{j-1}+\tau h) \: h^k ~ + O\left( h^{2r+2} \right).
			\end{equation}
			The result follows.
		\end{proof}
		Let
		$$ \mathcal{L} = \left( I - \K'(\varphi) \right)^{-1} \K'(\varphi).$$
		Then $\mathcal{L}$ is a compact linear integral operator with kernel $\tilde{\ell}$. Note that the smoothness of $\tilde{\ell}$ is same as the kernel $\ell$. See \cite{riesz2012functional}, \cite[Lemma 5.1]{Atk-Pot} for details. It follows that
		\begin{equation*}
			\mathcal{L} P_n x (s) -  \mathcal{L} x (s)  = \sum_{j=1}^{n} \int_{t_{j-1}}^{t_j} \tilde{\ell}(s, t) \left( P_n x(t) - x(t) \right) dt \quad \forall x \in \mathcal{X}.	
		\end{equation*}
		Then using Lemma \ref{lem:2}, and following the proofs of \cite[Theorem 5.1]{McLean} and \cite[Theorem 3.2]{rakshit2020asymptotic}, it can be shown that
		\begin{equation}\label{asy_exp1}
			\mathcal{L} (I - P_n) \varphi (t_i) = \mathcal{E}_{2r}{\left( \varphi \right)}(t_i) h^{2r} + O \left( h^{2r+2} \right), \quad i = 0, 1, \ldots, n,
		\end{equation}
		where
		{\small \begin{multline*}
				\mathcal{E}_{2r}{\left( \varphi \right)}(t_i)=\bar{b}_{2r,2r} \int_0^1 \tilde{\ell}(t_i,t) (t) ~ \varphi^{(2r)}(t) \: dt \\ +\sum_{p=1}^{2r-1}\bar{b}_{2r,p}  \Bigg\{
				\left[ \left( \frac{\partial}{\partial t}\right) ^{2r-p - 1}\left( \tilde{\ell}(t_i,t) \varphi^{(p)}(t)\right) \right]_{t=0}^{t=1}  \\
				- \left[ \left( \frac{\partial}{\partial t}\right) ^{2r-p - 1}\left( \tilde{\ell}(t_i,t) \varphi^{(p)}(t)\right) \right]_{t=t_i-}^{t=t_i+} \Bigg\}
		\end{multline*}}
		with
		\begin{equation}\nonumber
			\bar{b}_{2r,p}= \int_{0}^{1} \int_{0}^{1} 
			\Lambda_{r}(\tau,s)\frac{(\tau-s)^{p}}{p!}\frac{B_{2r-p}(s)}{(2r-p)!} \: d\tau \: ds
		\end{equation}
		and $B_{k}$ is the Bernoulli polynomial of degree $k \geq 0$.

		\subsection{Approximation of the Integral Operator}
		Let $x \in \X$. Recall that
		\begin{equation*}
			\mathcal{K}({x})(s)= \int_{0}^{1} \kappa(s, t, x(t)) ~ dt, \quad s\in [0,1].
		\end{equation*}
		Replacing the above integral by the numerical quadrature rule (\ref{eq:Comp_Num_int}), we define the Nystr\"{o}m approximation of $\mathcal{K}$ by
		\begin{equation}\nonumber
			\mathcal{K}_m (x) (s)  =  \frac{h}{p} \sum_{j=1}^n  \sum_{q=1}^\rho \sum_{\nu =1}^p w_q \;  \kappa { \left( s,   t_{j-1} + \mu_{q \nu} h, x{\left( t_{j-1} + \mu_{q \nu} h \right)} \right) },  \quad  s \in [0, 1].
		\end{equation}
		Let $\{ \mu_{q \nu}^j = t_{j-1} + \mu_{q \nu} h : j = 1, 2, \dots, n; q = 1, 2, \dots, \rho; \nu = 1, 2, \dots, p \}$ denotes the set of all quadrature nodes in $[0, 1]$. Then
		\begin{equation}\nonumber
			\mathcal{K}_m (x) (s)  =  \frac{h}{p} \sum_{j=1}^n  \sum_{q=1}^\rho \sum_{\nu =1}^p w_q \;  \kappa { \left( s,   \mu_{q \nu}^j, x{\left( \mu_{q \nu}^j \right)} \right) },  \quad  s \in [0, 1].
		\end{equation}
		The Nystr\"{o}m method for solving \eqref{eq:main} is to find the element $x_m$ for which
		\begin{equation*}
			x_m - \mathcal{K}_m (x_m) = f.
		\end{equation*}
		For sufficiently large $m$, the above equation has a unique solution $\varphi_m$ in a neighborhood $B(\varphi, \epsilon)$ of $\varphi$, and
		\begin{eqnarray}\label{Gr_Ny_error}
			\|\varphi - \varphi_m \|_\infty   &\leq & C_2 \norm{\mathcal{K} (\varphi) - \mathcal{K}_m (\varphi)}_\infty \: = \: O \left (\tilde{h}^2 \right ),
		\end{eqnarray}
		where $C_2$ is a constant independent of $m$. See \cite[Theorem 4]{Atk_Ny}. We write
		\begin{align*}
			\left(I - P_n \right)\varphi_m = \left(I - P_n \right)\left( \varphi_m - \varphi \right) + \left(I - P_n \right)\varphi.
		\end{align*}
		Then from \eqref{proj:error}, \eqref{Gr_Ny_error}, we have
		\begin{equation}\label{eq:proj_error1}
			\left(I - P_n \right)\varphi_m =  O\left( \max\left\{ h^{r}, \tilde{h}^2 \right\}  \right).
		\end{equation}
		Let  $v_1, v_2 \in \X$ and $x \in B(\varphi, \epsilon)$. Then the Fr\'echet derivatives of $\mathcal{K}_m$ at $x$ are given by
		\begin{equation}\nonumber
			\mathcal{K}_m' (x) v_1(s)  =  \frac{h}{p} \sum_{j=1}^n  \sum_{q=1}^\rho \sum_{\nu =1}^p w_q \;  D^{(0,0,1)}\kappa{ \left( s,   \mu_{q \nu}^j, x{\left( \mu_{q \nu}^j \right)}\right)} v_1{\left( \mu_{q \nu}^j \right)},  \quad s \in [0, 1],
		\end{equation}
		\begin{equation}\nonumber
			\mathcal{K}_m'' (x)\left( v_1, v_2 \right)(s)  =  \frac{h}{p} \sum_{j=1}^n  \sum_{q=1}^\rho \sum_{\nu =1}^p w_q \;  \frac{\partial^2 \kappa}{\partial u^2} { \left( s,   \mu_{q \nu}^j, x{\left( \mu_{q \nu}^j \right)}\right)} v_1{\left( \mu_{q \nu}^j \right)} v_2{\left( \mu_{q \nu}^j \right)}.
		\end{equation}
		It follows that
		\begin{equation*}
			\norm{\mathcal{K}_m'' (x)\left( v_1, v_2 \right)}_\infty \leq \left( \sup_{\stackrel {s, t \in [0, 1]}{\lvert u \rvert \leq \|\varphi \|_\infty + \epsilon}} 	\left\lvert  \frac{\partial^2 \kappa}{\partial u^2}(s, t, u) \right\rvert \right) \norm{v_1}_\infty \norm{v_2}_\infty.
		\end{equation*}
		This implies 
		\begin{equation*}
			\norm{\mathcal{K}_m'' (x)}  < \infty
		\end{equation*}
		Similarly, it can be shown that
		{\small \begin{equation*}
				\norm{\mathcal{K}_m^{(3)} (x)} \leq \left( \sup_{\stackrel {s, t \in [0, 1]}{\lvert u \rvert \leq \|\varphi \|_\infty + \epsilon}} 	\left\lvert  \frac{\partial^3 \kappa}{\partial u^3}(s, t, u) \right\rvert \right) = C_3 < \infty
		\end{equation*}}
		\begin{lemma}\label{lip_2}
			Let $x_1, x_2 \in B(\varphi, \epsilon)$. If $D^{(0, 0, 3)}\kappa \in C(\Omega)$ then
			\begin{equation*}
				\norm{\mathcal{K}_m'' (x_1) - \mathcal{K}_m'' (x_2)} \leq C_3 \norm{x_1 - x_2}_\infty,
			\end{equation*}
			where $C_3$ is constant independent of $n$.
			\begin{proof}
				For $v_1, v_2 \in \X$, we have
				{\footnotesize \begin{multline*}
						\left[ \mathcal{K}_m'' (x_1) - \mathcal{K}_m'' (x_2) \right](v_1, v_2)(s) \\ 
						= \frac{h}{p} \sum_{j=1}^n  \sum_{q=1}^\rho \sum_{\nu =1}^p w_q \left[ \frac{\partial^2 \kappa}{\partial u^2}{ \left( s,   \mu_{q \nu}^j, x_1{\left( \mu_{q \nu}^j \right)}\right)} - \frac{\partial^2 \kappa}{\partial u^2}{ \left( s,   \mu_{q \nu}^j, x_2{\left( \mu_{q \nu}^j \right)}\right)} \right] v_1{\left( \mu_{q \nu}^j \right)}  v_2{\left( \mu_{q \nu}^j \right)}
				\end{multline*}}
				for all $s \in [0, 1]$. Since $D^{(0, 0, 3)}\kappa \in C(\Omega)$, applying mean value theorem on $\frac{\partial^2 \kappa}{\partial u^2}$ with respect to its third variable $u$, we obtain
				\begin{multline*}
					\frac{\partial^2 \kappa}{\partial u^2}{ \left( s,   \mu_{q \nu}^j, x_1{\left( \mu_{q \nu}^j \right)}\right)} - \frac{\partial^2 \kappa}{\partial u^2}{ \left( s,   \mu_{q \nu}^j, x_2{\left( \mu_{q \nu}^j \right)}\right)}  = \left[ x_1{\left( \mu_{q \nu}^j \right)} -x_2{\left( \mu_{q \nu}^j \right)} \right] \frac{\partial^3 \kappa}{\partial u^3}{ \left( s,   \mu_{q \nu}^j,  \zeta_{q \nu}^j \right)},
				\end{multline*} 
				where $\zeta_{q \nu}^j$ lies in the line segment joining the points $x_1{\left( \mu_{q \nu}^j \right)}$ and $x_2{\left( \mu_{q \nu}^j \right)}$. Then
				\begin{eqnarray*}
					\left \lvert  \frac{\partial^2 \kappa}{\partial u^2}{ \left( s,   \mu_{q \nu}^j, x_1{\left( \mu_{q \nu}^j \right)}\right)} - \frac{\partial^2 \kappa}{\partial u^2}{ \left( s,   \mu_{q \nu}^j, x_2{\left( \mu_{q \nu}^j \right)}\right)}  \right \rvert  \leq C_3 \norm{x_1 - x_2}_\infty.
				\end{eqnarray*}
				Hence
				\begin{align*}
					\norm{\left[ \mathcal{K}_m'' (x_1) - \mathcal{K}_m'' (x_2) \right](v_1, v_2)}_\infty \leq C_3 \norm{x_1 - x_2}_\infty \norm{v_1}_\infty \norm{v_2}_\infty,
				\end{align*}
				which follows the result.
			\end{proof}
		\end{lemma}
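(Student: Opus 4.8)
The plan is to estimate the operator norm directly from the explicit quadrature formula for the second Fr\'echet derivative of the Nystr\"om operator. First I would fix arbitrary $v_1, v_2 \in \X$ with $\norm{v_1}_\infty \leq 1$ and $\norm{v_2}_\infty \leq 1$, and write the difference $\left[ \mathcal{K}_m'' (x_1) - \mathcal{K}_m'' (x_2) \right](v_1, v_2)(s)$ as the single quadrature sum
\[
\frac{h}{p} \sum_{j=1}^n \sum_{q=1}^\rho \sum_{\nu =1}^p w_q \left[ \frac{\partial^2 \kappa}{\partial u^2}\!\left( s, \mu_{q \nu}^j, x_1(\mu_{q \nu}^j)\right) - \frac{\partial^2 \kappa}{\partial u^2}\!\left( s, \mu_{q \nu}^j, x_2(\mu_{q \nu}^j)\right) \right] v_1(\mu_{q \nu}^j)\, v_2(\mu_{q \nu}^j),
\]
for $s \in [0,1]$. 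The point is that $x_1$ and $x_2$ enter only through the third slot of $\partial^2\kappa/\partial u^2$, so the whole estimate reduces to bounding that one difference at each quadrature node.

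Next, since $D^{(0,0,3)}\kappa \in C(\Omega)$, the map $\partial^2\kappa/\partial u^2$ is continuously differentiable in its third variable, and the mean value theorem gives a point $\zeta_{q\nu}^j$ between $x_1(\mu_{q\nu}^j)$ and $x_2(\mu_{q\nu}^j)$ with
\[
\frac{\partial^2 \kappa}{\partial u^2}\!\left( s, \mu_{q \nu}^j, x_1(\mu_{q \nu}^j)\right) - \frac{\partial^2 \kappa}{\partial u^2}\!\left( s, \mu_{q \nu}^j, x_2(\mu_{q \nu}^j)\right) = \left[ x_1(\mu_{q\nu}^j) - x_2(\mu_{q\nu}^j) \right] \frac{\partial^3 \kappa}{\partial u^3}\!\left( s, \mu_{q \nu}^j, \zeta_{q\nu}^j \right).
\]
Because $x_1, x_2 \in B(\varphi,\epsilon)$ we have $|\zeta_{q\nu}^j| \leq \norm{\varphi}_\infty + \epsilon$, so $|\partial^3\kappa/\partial u^3|$ at these arguments is at most the supremum of $|D^{(0,0,3)}\kappa|$ over the fixed compact set $[0,1]\times[0,1]\times[-\norm{\varphi}_\infty-\epsilon,\ \norm{\varphi}_\infty+\epsilon]$; calling this $C_3$, it depends on neither $n$ nor $m$. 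Thus the bracketed difference is bounded by $C_3 \norm{x_1 - x_2}_\infty$ uniformly in $s, j, q, \nu$.

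Finally I would substitute this bound back, pull the constant $C_3 \norm{x_1 - x_2}_\infty \norm{v_1}_\infty \norm{v_2}_\infty$ out of the sum, and use $\frac{h}{p}\sum_{j=1}^n\sum_{q=1}^\rho\sum_{\nu=1}^p w_q = hn\sum_{q=1}^\rho w_q = 1$ (or $\sum_q |w_q|$, still a fixed constant, if negative weights are allowed) to obtain $\norm{\left[\mathcal{K}_m''(x_1) - \mathcal{K}_m''(x_2)\right](v_1,v_2)}_\infty \leq C_3 \norm{x_1-x_2}_\infty\norm{v_1}_\infty\norm{v_2}_\infty$; taking the supremum over $s \in [0,1]$ and then over $\norm{v_1}_\infty, \norm{v_2}_\infty \leq 1$ gives the claimed operator-norm estimate. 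There is no genuinely hard step here: it is a routine Lipschitz estimate, and the only point deserving care is checking that $C_3$ is independent of $n$ (and $m$), which is precisely why one isolates the node-wise difference before summing and invokes the bounded total mass of the quadrature weights.
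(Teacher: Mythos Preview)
Your proposal is correct and follows essentially the same route as the paper's proof: write the difference as the quadrature sum, apply the mean value theorem in the third variable at each node, bound $|\partial^3\kappa/\partial u^3|$ by the constant $C_3$, and conclude. Your extra remark about the total mass $\frac{h}{p}\sum_{j,q,\nu} w_q = 1$ makes explicit a step the paper leaves implicit, but otherwise the arguments coincide.
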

		
		We will now quote some error estimates for the Nystr\"om approximations. 
		
		For $\alpha \geq 0$, if $v_1, v_2 \in C_{\Delta^{(m)}}^{\alpha}[0, 1]$, then from \cite{schumaker} or \cite[Corollary 1]{Atk-Pot2}, we obtain the following errors for numerical integration.
		\begin{eqnarray}\label{eq:Ny2}
			\norm{\left[ {\mathcal{K}}_m'(\varphi) - {\mathcal{K}}'(\varphi) \right]v_1}_\infty = O\left(\tilde{h}^2\right),
		\end{eqnarray}
		\begin{eqnarray*}
			\norm{\left[ {\mathcal{K}}_m''(\varphi) - {\mathcal{K}}''(\varphi) \right]\left( v_1, v_2 \right)}_\infty = O\left(\tilde{h}^2\right).
		\end{eqnarray*}
		Also from \cite[Proposition 3.3]{RPK-GR1}, we have
		\begin{eqnarray}\label{eq:Ny3}
			\norm{{\mathcal{K}}_m'(\varphi_m) - {\mathcal{K}}_m'(\varphi)} \leq C_4 \norm{\varphi_m - \varphi}_{\infty} = O\left(\tilde{h}^2\right).
		\end{eqnarray}
		Therefore combining \eqref{eq:Ny2} and the above equation, we obtain
		\begin{eqnarray}\label{eq:Ny4}
			\norm{\left[ {\mathcal{K}}_m'(\varphi_m) - {\mathcal{K}}'(\varphi) \right]v}_\infty = O\left(\tilde{h}^2\right), \quad \text{ for all } v \in C_{\Delta_m}^\nu[0, 1].
		\end{eqnarray}
		Similarly,
		\begin{eqnarray}\label{eq:Ny5}
			\norm{\left[ {\mathcal{K}}_m''(\varphi_m) - {\mathcal{K}}''(\varphi) \right]\left(v_1, v_2\right)}_\infty = O\left(\tilde{h}^2\right), \quad \forall v_1, v_2 \in C_{\Delta_m}^\nu[0, 1],
		\end{eqnarray}
		for all $ v_1, v_2, v_3 \in C_{\Delta_m}^\nu[0, 1]$ implies
		\begin{eqnarray}\label{eq:Ny35}
			\norm{\left[ {\mathcal{K}}_m^{(3)}(\varphi_m) - {\mathcal{K}}^{(3)}(\varphi) \right]\left(v_1, v_2, v_3\right)}_\infty = O\left(\tilde{h}^2\right).
		\end{eqnarray}	
		\section{Asymptotic Error Analysis}
		Replacing $\mathcal{K}$ by $\mathcal{K}_m$ and $\pi_n$ by $P_n$ in the Galerkin equation $x - \pi_n \mathcal{K}(x) = \pi_n f$, the discrete Galerkin equation is defined by $z_n^G - P_n\mathcal{K}_m (z_n^G) = P_n f$, where $z_n^G$ is the discrete Galerkin solution. Then the discrete iterated Galerkin solution is defined by
		\begin{equation*}
			z_n^S = \mathcal{K}_m (z_n^G) +f.
		\end{equation*} 
		Note that $P_n z_n^S = z_n^G$.
		From the equations
		$ \varphi_m - \mathcal{K}_m(\varphi_m) = f$ and $z_n^S - \mathcal{K}_m(P_n z_n^S) = f$,
		we obtain the following error term.
		\begin{align}\label{equation:1}
			z_n^S - \varphi_m = & \left[ I - {\mathcal{K}}_m'(\varphi_m) \right]^{-1} \left[\mathcal{K}_m(z_n^G) -  \mathcal{K}_m(\varphi_m) - {\mathcal{K}}_m'(\varphi_m) (z_n^G - \varphi_m) \right] \nonumber\\
			& - \mathcal{L}_m (I -P_n) \left[\mathcal{K}_m(z_n^G) -  \mathcal{K}_m(\varphi_m) - {\mathcal{K}}_m'(\varphi_m) (z_n^G - \varphi_m) \right] \nonumber\\
			& - \mathcal{L}_m (I -P_n){\mathcal{K}}_m'(\varphi_m) (z_n^G - \varphi_m) \nonumber\\
			& - \mathcal{L}_m (I -P_n) \varphi_m,
		\end{align}
		where$$\mathcal{L}_m = \left[ I - {\mathcal{K}}_m'(\varphi_m) \right]^{-1} {\mathcal{K}}_m'(\varphi_m).$$	
		
		Using the Resolvent Identity, we get
		\begin{multline*}
			\left( I - {\mathcal{K}}_m'(\varphi_m) \right)^{-1}  -  \left( I - {\mathcal{K}}'(\varphi) \right)^{-1} \\
			=  \left( I - {\mathcal{K}}'(\varphi) \right)^{-1} \left[  {\mathcal{K}}_m'(\varphi_m) - {\mathcal{K}}'(\varphi) \right] \left( I - {\mathcal{K}}_m'(\varphi_m)  \right)^{-1}
		\end{multline*}
		Therefore
		\begin{align}\label{eq:res-id}
			\left( I - {\mathcal{K}}_m'(\varphi_m) \right)^{-1} & = \left( I - {\mathcal{K}}'(\varphi) \right)^{-1} \nonumber \\ 
			& ~~~ + \left( I - {\mathcal{K}}'(\varphi) \right)^{-1} \left[  {\mathcal{K}}_m'(\varphi_m) - {\mathcal{K}}_m'(\varphi)  \right] \left( I - {\mathcal{K}}_m'(\varphi_m)  \right)^{-1} \nonumber \\ 
			& ~~~ + \left( I - {\mathcal{K}}'(\varphi) \right)^{-1} \left[ {\mathcal{K}}_m'(\varphi) - {\mathcal{K}}'(\varphi) \right] \left( I - {\mathcal{K}}_m'(\varphi_m)  \right)^{-1}
		\end{align}	
		
		Now, we will analyze each of the terms appearing in the RHS of the equation \eqref{equation:1}.	Error estimates for each of the said terms will be obtained by the following propositions. 
		
		\begin{proposition} \label{prop:3} Let $\left\{t_i : i = 0, 1, \ldots, n \right\}$ be the set of partition points of $[0, 1]$ defined by \eqref{eq:partition1}, then
			\begin{equation*}
				\mathcal{L}_m (I -P_n) \varphi_m (t_i) = \mathcal{E}_{2r}{\left( \varphi \right)}(t_i) h^{2r} + O \left( \max\left\{ h^{2r+2}, \tilde{h}^2 \right\} \right),
			\end{equation*}
			where $\mathcal{E}_{2r}$ is defined by \eqref{asy_exp1}.
		\end{proposition}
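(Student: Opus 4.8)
The plan is to treat \eqref{asy_exp1} as the backbone and to show that passing from the exact operator $\mathcal{L}=\left(I-\mathcal{K}'(\varphi)\right)^{-1}\mathcal{K}'(\varphi)$ acting on $(I-P_n)\varphi$ to the discrete operator $\mathcal{L}_m$ acting on $(I-P_n)\varphi_m$ only perturbs things at the $O(\tilde h^2)$ level. I would begin with the splitting
\begin{align*}
\mathcal{L}_m (I-P_n)\varphi_m
&= \mathcal{L}(I-P_n)\varphi
 \;+\; \bigl(\mathcal{L}_m-\mathcal{L}\bigr)(I-P_n)\varphi
 \;+\; \mathcal{L}_m (I-P_n)(\varphi_m-\varphi).
\end{align*}
Evaluated at a partition point $t_i$, the first term equals $\mathcal{E}_{2r}(\varphi)(t_i)\,h^{2r}+O(h^{2r+2})$ by \eqref{asy_exp1}, so the proposition follows once the last two terms are shown to be $O\left(\max\{h^{2r+2},\tilde h^2\}\right)$ in $\norm{\cdot}_\infty$; since $\tilde h^2\le h^r\le 1$ (the standing hypothesis on $\Delta^{(m)}$), any bound of the form $O(\tilde h^2)$ is acceptable for these two terms.

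The third term is immediate. By collectively compact operator theory, $1\notin\sigma\bigl(\mathcal{K}'(\varphi)\bigr)$ forces $\left(I-\mathcal{K}_m'(\varphi_m)\right)^{-1}$ to exist with $\sup_m\norm{\left(I-\mathcal{K}_m'(\varphi_m)\right)^{-1}}<\infty$ for all large $m$, hence $\sup_m\norm{\mathcal{L}_m}<\infty$; together with $\norm{I-P_n}\le 1+\norm{P_n}<\infty$ (uniformly in $n$) and $\norm{\varphi_m-\varphi}_\infty=O(\tilde h^2)$ from \eqref{Gr_Ny_error}, this gives $\norm{\mathcal{L}_m(I-P_n)(\varphi_m-\varphi)}_\infty=O(\tilde h^2)$.

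For the middle term I would use the resolvent identity in the form
\[
\mathcal{L}_m-\mathcal{L}
=\left(I-\mathcal{K}_m'(\varphi_m)\right)^{-1}-\left(I-\mathcal{K}'(\varphi)\right)^{-1}
=\left(I-\mathcal{K}_m'(\varphi_m)\right)^{-1}\bigl[\mathcal{K}_m'(\varphi_m)-\mathcal{K}'(\varphi)\bigr]\left(I-\mathcal{K}'(\varphi)\right)^{-1},
\]
which is \eqref{eq:res-id} with the factors regrouped and the identity $\left(I-A\right)^{-1}A=\left(I-A\right)^{-1}-I$ applied to $A=\mathcal{K}'(\varphi)$ and $A=\mathcal{K}_m'(\varphi_m)$. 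Put $\psi:=\left(I-\mathcal{K}'(\varphi)\right)^{-1}(I-P_n)\varphi=(I+\mathcal{L})(I-P_n)\varphi$. Since $\Delta^{(m)}$ refines $\Delta^{(n)}$, since $(I-P_n)\varphi$ together with its low-order derivatives is bounded on each subinterval of $\Delta^{(n)}$ by the usual $L^2$-projection estimates (and in fact $\norm{(I-P_n)\varphi}_\infty=O(h^r)$), and since $\mathcal{K}'(\varphi)$ — whose kernel $\ell$ is piecewise $C^r$ with its only break along $s=t$ — creates no breakpoints outside $\Delta^{(n)}$, the function $\psi$ lies in the class $C^{\nu}_{\Delta^{(m)}}[0,1]$ required by \eqref{eq:Ny2}--\eqref{eq:Ny4}, with norms independent of $n$. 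Then \eqref{eq:Ny4} gives $\norm{\bigl[\mathcal{K}_m'(\varphi_m)-\mathcal{K}'(\varphi)\bigr]\psi}_\infty=O(\tilde h^2)$, and one further application of the uniformly bounded operator $\left(I-\mathcal{K}_m'(\varphi_m)\right)^{-1}$ yields $\norm{\bigl(\mathcal{L}_m-\mathcal{L}\bigr)(I-P_n)\varphi}_\infty=O(\tilde h^2)$. Adding the three estimates and evaluating at $t_i$ proves the claim.

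The step I expect to cost the most care is this last one: one must check that $\psi=(I+\mathcal{L})(I-P_n)\varphi$ genuinely belongs to the piecewise-smoothness class on the \emph{fine} partition $\Delta^{(m)}$ for which the Nyström quadrature estimates \eqref{eq:Ny2}--\eqref{eq:Ny4} are valid, and — crucially — that the constants hidden in the resulting $O(\tilde h^2)$ do not silently depend on $n$. The delicate point is that derivatives of the projection error $(I-P_n)\varphi$ are only $O(h^{r-k})$, so one either verifies that the orders $k$ entering the quadrature error satisfy $r-k\ge 0$ or else uses the extra factor $\norm{(I-P_n)\varphi}_\infty=O(h^r)$ to absorb any negative power of $h$ (the case $r=1$, where $\mathcal{X}_n$ consists of piecewise constants and $(I-P_n)\varphi$ is as smooth as $\varphi$ on each subinterval, is in fact the most transparent one). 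Everything else — the resolvent identity, the uniform invertibility of $I-\mathcal{K}_m'(\varphi_m)$, and the cited Nyström error estimates — is standard.
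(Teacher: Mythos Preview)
Your proposal is correct and follows essentially the same route as the paper: isolate $\mathcal{L}(I-P_n)\varphi$ as the leading term via \eqref{asy_exp1}, then control the passage from $(\mathcal{L},\varphi)$ to $(\mathcal{L}_m,\varphi_m)$ at the $O(\tilde h^{2})$ level using the resolvent identity together with \eqref{Gr_Ny_error} and \eqref{eq:Ny2}--\eqref{eq:Ny4}. The paper merely organizes the splitting differently---it expands $(I-\mathcal{K}_m'(\varphi_m))^{-1}$ through \eqref{eq:res-id} first and applies \eqref{eq:Ny4} to $(I-P_n)\varphi_m$ rather than to your $\psi$---and it leaves the piecewise-regularity verification you flag just as implicit as you do.
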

		\begin{proof} It can be easily verified that (using \eqref{eq:res-id})
			{\small	\begin{align}\label{eq:9.25}
					\mathcal{L}_m& (I -P_n) \varphi_m   =  \left[ I - {\mathcal{K}}_m'(\varphi_m) \right]^{-1} {\mathcal{K}}_m'(\varphi_m) (I -P_n) \varphi_m \nonumber\\
					& = \left( I - {\mathcal{K}}'(\varphi) \right)^{-1} {\mathcal{K}}_m'(\varphi_m) (I -P_n) \varphi_m \nonumber \\
					& ~~~ +  \left( I - {\mathcal{K}}'(\varphi) \right)^{-1} \left[  {\mathcal{K}}_m'(\varphi_m) - {\mathcal{K}}_m'(\varphi)  \right] \left( I - {\mathcal{K}}_m'(\varphi_m)  \right)^{-1} {\mathcal{K}}_m'(\varphi_m) (I -P_n) \varphi_m \nonumber \\
					& ~~~ + \left( I - {\mathcal{K}}'(\varphi) \right)^{-1} \left[ {\mathcal{K}}_m'(\varphi) - {\mathcal{K}}'(\varphi) \right] \left( I - {\mathcal{K}}_m'(\varphi_m)  \right)^{-1} {\mathcal{K}}_m'(\varphi_m) (I -P_n) \varphi_m.
			\end{align}}
			Consider the first term of the above equation, we have
			\begin{align*}
				\left( I - {\mathcal{K}}'(\varphi) \right)^{-1} & {\mathcal{K}}_m'(\varphi_m) (I -P_n) \varphi_m \\
				& = \left( I - {\mathcal{K}}'(\varphi) \right)^{-1} {\mathcal{K}}'(\varphi) (I -P_n) \varphi \\
				& ~~~ + \left( I - {\mathcal{K}}'(\varphi) \right)^{-1} {\mathcal{K}}'(\varphi) (I -P_n) (\varphi_m - \varphi) \\
				& ~~~ + \left( I - {\mathcal{K}}'(\varphi) \right)^{-1} \left[{\mathcal{K}}_m'(\varphi_m) - {\mathcal{K}}'(\varphi)\right] (I -P_n) \varphi_m.
			\end{align*}
			Using \eqref{asy_exp1}, \eqref{Gr_Ny_error} and \eqref{eq:Ny4}, we obtain
			\begin{multline}\label{eq:9.26}
				\left( I - {\mathcal{K}}'(\varphi) \right)^{-1}  {\mathcal{K}}_m'(\varphi_m) (I -P_n) \varphi_m (t_i) = \mathcal{E}_{2r}{\left( \varphi \right)}(t_i) h^{2r} + O \left( \max\left\{ h^{2r+2}, \tilde{h}^2 \right\} \right).
			\end{multline}
			Note that
			\begin{eqnarray*}
				\norm{	\mathcal {K}_m'  (\varphi_m)} \leq \sup_{\stackrel {s, t \in [0, 1]}{\lvert u \rvert \leq \|\varphi \|_\infty + \epsilon}}
				\lvert \kappa_u (s, t, u) \rvert
			\end{eqnarray*}
			and from \cite[Proposition 4.2]{RPK-GR1}, we have $\norm{ \left( I - {\mathcal{K}}_m'(\varphi_m)  \right)^{-1}} < \infty.$ Thus, from \eqref{eq:Ny2} and \eqref{eq:Ny3}, we have the followings
			{\small \begin{multline*}
					\norm{\left( I - {\mathcal{K}}'(\varphi) \right)^{-1} \left[  {\mathcal{K}}_m'(\varphi_m) - {\mathcal{K}}_m'(\varphi)  \right] \left( I - {\mathcal{K}}_m'(\varphi_m)  \right)^{-1} {\mathcal{K}}_m'(\varphi_m) (I -P_n) \varphi_m}_\infty   = O\left( \tilde{h}^2\right),
			\end{multline*}}
			{\small	\begin{multline*}
					\norm{\left( I - {\mathcal{K}}'(\varphi) \right)^{-1} \left[  {\mathcal{K}}_m'(\varphi_m) - {\mathcal{K}}_m'(\varphi)  \right] \left( I - {\mathcal{K}}_m'(\varphi_m)  \right)^{-1} {\mathcal{K}}_m'(\varphi_m) (I -P_n) \varphi_m}_\infty   = O\left( \tilde{h}^2\right).
			\end{multline*}}
			\noindent
			Hence the required result follows from \eqref{eq:9.25}, \eqref{eq:9.26} and the above two estimates.
		\end{proof}
		
		Before each of the following propositions, we prove lemmas and its corollaries which are used to prove next propositions.
		
		\begin{lemma}\label{lem:4}
			Let $P_n$ be the discrete orthogonal projection defined by \eqref{dop}. If $ D^{(0, 0, 3)} \kappa \in C(\Omega)$ and $v \in C^{r + 2 } ( [0, 1]), $ then for $r \geq 1$
			\begin{align} \label{eq:exp2}
				\mathcal {K}'' (\varphi)  ( P_n v - v)^2   =  {T} ( v ) h^{ 2 r } + O (h^{2 r + 2}), 
			\end{align}
			where
			\begin{equation*}
				{T} ( v ) = \left ( \int_0^1 J_r (\tau)^2 \: d \tau \right ) \mathcal {K}'' (\varphi)   
				\left (v^{(r)} \right )^2.
			\end{equation*}
			Furthermore, when $r = 1$, then 
			\begin{equation}\label{eq:exp4}
				\mathcal {K}^{(3)} (\varphi)  ( P_n v - v)^3   =
				O \left( h^{4} \right).
			\end{equation}
		\end{lemma}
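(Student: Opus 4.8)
The plan is to feed the pointwise expansion of $P_n v-v$ from Lemma~\ref{lem:2} into the integral operator and then run the same periodic Euler–Maclaurin argument that produced \eqref{asy_exp1}. Since $v\in C^{r+2}[0,1]$, I first truncate the Taylor step in the proof of Lemma~\ref{lem:2} at order $r+1$ (which that regularity allows): for $t=t_{j-1}+\tau h$, $\tau\in[0,1]$,
\[
P_n v(t)-v(t)=\sum_{k=1}^{r+1}J_k(\tau)\,v^{(k)}(t)\,h^k+O\!\left(h^{r+2}\right),\qquad J_k(\tau)=\int_0^1\Lambda_r(\tau,s)\tfrac{(s-\tau)^k}{k!}\,ds .
\]
The key structural fact is that $J_k\equiv 0$ for $1\le k\le r-1$: the kernel $\Lambda_r(\tau,\cdot)$ reproduces every polynomial of degree $\le r-1$, so $J_k(\tau)=\frac{(s-\tau)^k}{k!}\big|_{s=\tau}=0$ for such $k$. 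Hence $P_n v(t)-v(t)=J_r(\tau)\,v^{(r)}(t)\,h^r+J_{r+1}(\tau)\,v^{(r+1)}(t)\,h^{r+1}+O(h^{r+2})$.

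Squaring (all coefficients are bounded) and applying $\mathcal{K}''(\varphi)$, which acts by $w\mapsto\int_0^1\lambda(s,t,\varphi(t))w(t)\,dt$, and splitting $\int_0^1=\sum_{j=1}^n\int_{t_{j-1}}^{t_j}$, I get
\[
\mathcal{K}''(\varphi)(P_n v-v)^2(s)=h^{2r}I_1(s)+2h^{2r+1}I_2(s)+O\!\left(h^{2r+2}\right),
\]
where, with $g_s(t)=\lambda(s,t,\varphi(t))v^{(r)}(t)^2$ and $\tilde g_s(t)=\lambda(s,t,\varphi(t))v^{(r)}(t)v^{(r+1)}(t)$ (both piecewise smooth in $t$ with a single break at $t=s$, under the standing hypotheses on $\kappa$ and $v$),
\[
I_1(s)=\sum_{j=1}^n\int_{t_{j-1}}^{t_j}g_s(t)\,J_r\!\Big(\tfrac{t-t_{j-1}}{h}\Big)^{2}dt,\qquad I_2(s)=\sum_{j=1}^n\int_{t_{j-1}}^{t_j}\tilde g_s(t)\,J_r\!\Big(\tfrac{t-t_{j-1}}{h}\Big)J_{r+1}\!\Big(\tfrac{t-t_{j-1}}{h}\Big)dt .
\]
To both sums I apply the Euler–Maclaurin-type expansion for $\sum_j\int_{t_{j-1}}^{t_j}g(t)\,\Phi\big(\tfrac{t-t_{j-1}}{h}\big)\,dt$ with a fixed profile $\Phi$ on $[0,1]$ and a piecewise-smooth $g$ having a derivative jump at the interior point $t=s$ — exactly the tool used for \eqref{asy_exp1}, following \cite[Theorem~5.1]{McLean} and \cite[Theorem~3.2]{rakshit2020asymptotic}; its $h^\ell$-coefficient is proportional to $\int_0^1\Phi(\tau)\tfrac{B_\ell(\tau)}{\ell!}\,d\tau$ (times boundary/jump data of $g$).

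The decisive input is a parity identity. From $L_\eta(1-\tau)=(-1)^\eta L_\eta(\tau)$ one gets $\Lambda_r(1-\tau,1-s)=\Lambda_r(\tau,s)$, and substituting $s\mapsto 1-s$ in the definition of $J_k$ gives $J_k(1-\tau)=(-1)^k J_k(\tau)$; in particular $J_r^{\,2}$ is symmetric about $\tau=\tfrac12$ while $J_r J_{r+1}$ is antisymmetric. For $I_2$ this forces $\int_0^1 J_r(\tau)J_{r+1}(\tau)\,d\tau=0$, so its zeroth-order term vanishes, $I_2(s)=O(h)$, and $2h^{2r+1}I_2(s)=O(h^{2r+2})$. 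For $I_1$, the symmetry of $\Phi=J_r^{\,2}$ kills every odd-order Euler–Maclaurin term, since $\int_0^1 J_r(\tau)^2 B_{2\ell-1}(\tau)\,d\tau=0$ (the odd Bernoulli polynomials being antisymmetric about $\tfrac12$); hence
\[
I_1(s)=\Big(\int_0^1 J_r(\tau)^2\,d\tau\Big)\int_0^1\lambda(s,t,\varphi(t))v^{(r)}(t)^2\,dt+O(h^2)=\Big(\int_0^1 J_r(\tau)^2\,d\tau\Big)\mathcal{K}''(\varphi)\big(v^{(r)}\big)^2(s)+O(h^2).
\]
Substituting back gives $\mathcal{K}''(\varphi)(P_n v-v)^2=T(v)h^{2r}+O(h^{2r+2})$ with $T(v)=\big(\int_0^1 J_r(\tau)^2\,d\tau\big)\mathcal{K}''(\varphi)(v^{(r)})^2$, which is \eqref{eq:exp2}.

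For \eqref{eq:exp4}, set $r=1$: then $\Lambda_1(\tau,s)\equiv 1$, so $J_1(\tau)=\int_0^1(s-\tau)\,ds=\tfrac12-\tau$, odd about $\tau=\tfrac12$, whence $\int_0^1 J_1(\tau)^3\,d\tau=0$. With $v\in C^3$, Lemma~\ref{lem:2} gives $P_n v(t)-v(t)=J_1(\tau)v'(t)h+O(h^2)$, so $(P_n v-v)^3=J_1(\tau)^3 v'(t)^3 h^3+O(h^4)$; applying $\mathcal{K}^{(3)}(\varphi)$ (integration against $\frac{\partial^3\kappa}{\partial u^3}(s,t,\varphi(t))$, continuous by hypothesis) and splitting over subintervals,
\[
\mathcal{K}^{(3)}(\varphi)(P_n v-v)^3(s)=h^3\sum_{j=1}^n\int_{t_{j-1}}^{t_j}\frac{\partial^3\kappa}{\partial u^3}(s,t,\varphi(t))\,v'(t)^3\,J_1\!\Big(\tfrac{t-t_{j-1}}{h}\Big)^{3}dt+O\!\left(h^4\right),
\]
and since $\int_0^1 J_1(\tau)^3\,d\tau=0$ the $h^3$-order part of the sum vanishes, leaving the sum $O(h)$ and the whole expression $O(h^4)$. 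The main obstacle throughout is the Euler–Maclaurin step at a general $s$: the factor $\lambda(s,\cdot,\varphi(\cdot))$ (resp. $\frac{\partial^3\kappa}{\partial u^3}(s,\cdot,\varphi(\cdot))$) has a jump in its $t$-derivatives at the interior point $t=s$, whose contribution to the expansion coefficients must be tracked exactly as in \cite{McLean} and \cite{rakshit2020asymptotic} — the very situation already behind \eqref{asy_exp1}. The genuinely new ingredient is the parity $J_k(1-\tau)=(-1)^k J_k(\tau)$, which is precisely what annihilates the odd powers of $h$ together with the mixed $h^{2r+1}$ term and isolates the clean leading behaviour $T(v)h^{2r}$ with remainder $O(h^{2r+2})$.
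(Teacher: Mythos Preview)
Your proposal is correct and follows exactly the route the paper has in mind: the paper's own proof is a one-line citation to Lemma~\ref{lem:2} together with \cite[Lemma~2.4 and Remark~2.4]{RPK-Nidhin}, and you have reconstructed precisely that argument --- the pointwise expansion of $P_n v-v$, the vanishing of $J_k$ for $k\le r-1$ via the reproducing property of $\Lambda_r$, the parity identity $J_k(1-\tau)=(-1)^kJ_k(\tau)$, and the Euler--Maclaurin step (with the Green's-kernel discontinuity at $t=s$ handled as in \cite{McLean}, \cite{rakshit2020asymptotic}). One small caution: your claim that ``the sum is $O(h)$'' in the $r=1$ cubic case implicitly uses that $t\mapsto \partial_u^3\kappa(s,t,\varphi(t))\,v'(t)^3$ is piecewise Lipschitz, which needs slightly more than the bare hypothesis $D^{(0,0,3)}\kappa\in C(\Omega)$ stated in the lemma; this is, however, a regularity wrinkle shared with the cited result rather than a defect in your strategy.
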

		\begin{proof}
			The proofs of \eqref{eq:exp2} and \eqref{eq:exp4} follows from Lemma \ref{lem:2}, \cite[Lemma 2.4]{RPK-Nidhin} and \cite[Remark 2.4]{RPK-Nidhin} respectively.
		\end{proof}
		Given that $\left( I - {\mathcal{K}}'(\varphi) \right)^{-1}$ is a bounded linear operator. Let $$\mathcal{M} = \left( I - {\mathcal{K}}'(\varphi) \right)^{-1} \mathcal {K}'' (\varphi).$$ Note that $\mathcal{M}$ is a compact bi-linear integral operator. Also the smoothness of the kernel of $\mathcal{M}$, is same as the kernels of $\mathcal {K}'' (\varphi)$.  See \cite{Atk-Pot2}, \cite{riesz2012functional}.
		
		As a consequence of the above lemma, we get the following result.
		{\small \begin{Corollary} For $r \geq 1$,
				\begin{align} \label{eq:exp21}
					\mathcal{M}( P_n v - v)^2   =  \mathcal{T} ( v ) h^{ 2 r } + O (h^{2 r + 2}), 
				\end{align}
				where
				\begin{equation*}
					\mathcal{T} ( v ) = \left ( \int_0^1 J_r (\tau)^2 \: d \tau \right ) 	\mathcal{M}   \left (v^{(r)} \right )^2.
				\end{equation*}
			\end{Corollary}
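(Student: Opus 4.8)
The plan is to deduce the Corollary directly from Lemma \ref{lem:4} by applying the bounded linear operator $\left( I - {\mathcal{K}}'(\varphi) \right)^{-1}$ to the asymptotic expansion \eqref{eq:exp2}. First I would recall that $\mathcal{M} = \left( I - {\mathcal{K}}'(\varphi) \right)^{-1} \mathcal {K}'' (\varphi)$, so that for any $v \in C^{r+2}[0,1]$ we have the identity $\mathcal{M}(P_n v - v)^2 = \left( I - {\mathcal{K}}'(\varphi) \right)^{-1} \left[ \mathcal {K}'' (\varphi) (P_n v - v)^2 \right]$. Then substituting the expansion from Lemma \ref{lem:4}, namely $\mathcal {K}'' (\varphi)(P_n v - v)^2 = T(v) h^{2r} + O(h^{2r+2})$ with $T(v) = \left( \int_0^1 J_r(\tau)^2 \, d\tau \right) \mathcal {K}'' (\varphi)\left(v^{(r)}\right)^2$, and using linearity, I obtain $\mathcal{M}(P_n v - v)^2 = \left( I - {\mathcal{K}}'(\varphi) \right)^{-1} T(v) \, h^{2r} + \left( I - {\mathcal{K}}'(\varphi) \right)^{-1} O(h^{2r+2})$.

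Next I would identify the leading coefficient. Since $\left( I - {\mathcal{K}}'(\varphi) \right)^{-1}$ is linear and the scalar $\int_0^1 J_r(\tau)^2 \, d\tau$ factors out, we get $\left( I - {\mathcal{K}}'(\varphi) \right)^{-1} T(v) = \left( \int_0^1 J_r(\tau)^2 \, d\tau \right) \left( I - {\mathcal{K}}'(\varphi) \right)^{-1} \mathcal {K}'' (\varphi)\left(v^{(r)}\right)^2 = \left( \int_0^1 J_r(\tau)^2 \, d\tau \right) \mathcal{M}\left(v^{(r)}\right)^2 = \mathcal{T}(v)$, which is exactly the claimed coefficient. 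For the remainder term, boundedness of $\left( I - {\mathcal{K}}'(\varphi) \right)^{-1}$ gives $\norm{\left( I - {\mathcal{K}}'(\varphi) \right)^{-1} g}_\infty \leq \norm{\left( I - {\mathcal{K}}'(\varphi) \right)^{-1}} \norm{g}_\infty$, so an $O(h^{2r+2})$ input in the sup-norm is mapped to an $O(h^{2r+2})$ output, preserving the order estimate. Combining these two observations yields \eqref{eq:exp21}.

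There is essentially no serious obstacle here; the only point requiring a modicum of care is that the $O(h^{2r+2})$ term from Lemma \ref{lem:4} must be understood as a bound in the $L^\infty$-norm uniform in the relevant data (which it is, since it comes from the Taylor remainder in Lemma \ref{lem:2} together with the bounds on $\mathcal {K}'' (\varphi)$ and $\norm{P_n}$), so that applying a fixed bounded operator does not spoil it. One also notes that $\mathcal{M}$ is well-defined as a compact bilinear operator with kernel as smooth as that of $\mathcal {K}'' (\varphi)$, as already remarked before the Corollary, so all the expressions above make sense. Hence the proof is a short linearity-plus-boundedness argument applied to Lemma \ref{lem:4}.

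\begin{proof}
	Recall that $\mathcal{M} = \left( I - {\mathcal{K}}'(\varphi) \right)^{-1} \mathcal {K}'' (\varphi)$ and that $\left( I - {\mathcal{K}}'(\varphi) \right)^{-1}$ is a bounded linear operator on $\X$. Hence for $v \in C^{r+2}[0,1]$,
	\begin{equation*}
		\mathcal{M}( P_n v - v)^2 = \left( I - {\mathcal{K}}'(\varphi) \right)^{-1} \left[ \mathcal {K}'' (\varphi)( P_n v - v)^2 \right].
	\end{equation*}
	By Lemma \ref{lem:4},
	\begin{equation*}
		\mathcal {K}'' (\varphi)( P_n v - v)^2 = \left( \int_0^1 J_r(\tau)^2 \: d\tau \right) \mathcal {K}'' (\varphi)\left( v^{(r)} \right)^2 h^{2r} + O(h^{2r+2}),
	\end{equation*}
	where the remainder is estimated in the $\norm{\cdot}_\infty$-norm. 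Applying the bounded linear operator $\left( I - {\mathcal{K}}'(\varphi) \right)^{-1}$ and using its linearity, the scalar $\int_0^1 J_r(\tau)^2 \: d\tau$ factors out, and
	\begin{equation*}
		\left( I - {\mathcal{K}}'(\varphi) \right)^{-1} \mathcal {K}'' (\varphi)\left( v^{(r)} \right)^2 = \mathcal{M}\left( v^{(r)} \right)^2.
	\end{equation*}
	Moreover, since $\left( I - {\mathcal{K}}'(\varphi) \right)^{-1}$ is bounded, it maps an $O(h^{2r+2})$ term in $\norm{\cdot}_\infty$ to an $O(h^{2r+2})$ term. Therefore
	\begin{equation*}
		\mathcal{M}( P_n v - v)^2 = \left( \int_0^1 J_r(\tau)^2 \: d\tau \right) \mathcal{M}\left( v^{(r)} \right)^2 h^{2r} + O(h^{2r+2}) = \mathcal{T}(v) h^{2r} + O(h^{2r+2}),
	\end{equation*}
	which is \eqref{eq:exp21}.
\end{proof}
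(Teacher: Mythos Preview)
Your proposal is correct and matches the paper's intent: the paper gives no explicit proof, stating only ``As a consequence of the above lemma, we get the following result,'' and your argument---applying the bounded linear operator $(I-\mathcal{K}'(\varphi))^{-1}$ to the expansion \eqref{eq:exp2} from Lemma~\ref{lem:4}---is precisely the immediate consequence being invoked. The paper's remark just before the Corollary, that $\mathcal{M}$ is a compact bilinear integral operator whose kernel has the same smoothness as that of $\mathcal{K}''(\varphi)$, could also be read as suggesting one rerun the proof of Lemma~\ref{lem:4} with $\mathcal{M}$ in place of $\mathcal{K}''(\varphi)$; but your route via post-composition with a bounded inverse is shorter and entirely adequate.
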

		}
		\begin{lemma}\label{lem:5} 
			If $\varphi \in C^{r + 2 } ( [0, 1])$, then for $r \geq 1$, $$\K_m''(\varphi_m)(z_n^G - \varphi_m)^2 = {T} ( \varphi ) h^{ 2 r } + O \left(\max\left\{ h^{2 r + 2}, \tilde{h}^2 \right\}\right),$$
			where $T$ is defined in Lemma \ref{lem:4}.
		\end{lemma}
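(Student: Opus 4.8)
The plan is to reduce the statement about the discrete objects $\K_m''(\varphi_m)$ and $z_n^G-\varphi_m$ to the already-established expansion in Lemma~\ref{lem:4} for $\mathcal{K}''(\varphi)$ and $P_n v - v$, by controlling the three perturbations involved: replacing $\varphi_m$ by $\varphi$ in the argument of $\K_m''$, replacing the operator $\K_m''$ by $\K''$, and replacing $z_n^G-\varphi_m$ by $(P_n\varphi-\varphi)$. First I would write $z_n^G - \varphi_m = P_n z_n^S - \varphi_m = P_n(z_n^S - \varphi_m) + (P_n\varphi_m - \varphi_m)$, and then further split $P_n\varphi_m - \varphi_m = (P_n - I)(\varphi_m - \varphi) + (P_n - I)\varphi$. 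By \eqref{Gr_Ny_error} and \eqref{proj:error} the first piece is $O(\tilde h^2)$ (recall $\tilde h^2 \le h^r$, so this is absorbed), the middle piece $P_n(z_n^S-\varphi_m)$ is controlled once we know $z_n^S-\varphi_m$ is small — from \eqref{eq:discrete_it_Gal} and \eqref{equation:1} together with Proposition~\ref{prop:3} one gets $\|z_n^S - \varphi_m\|_\infty = O(\max\{h^{r+2},\tilde h^2\})$ for $r\ge 2$ and $O(\max\{h^3,\tilde h^2\})$ for $r=1$, in any case $o(h^r)$ — so that $z_n^G - \varphi_m = (P_n - I)\varphi + O(h^{r+1}+\tilde h^2)$ in the sup norm.

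Next I would expand the bilinear form. Writing $e := z_n^G - \varphi_m$ and $g := P_n\varphi - \varphi$, bilinearity gives
\begin{equation*}
\K_m''(\varphi_m)(e,e) = \K_m''(\varphi_m)(g,g) + \K_m''(\varphi_m)(e-g,e+g),
\end{equation*}
and since $\|\K_m''(\varphi_m)\| \le C_3 < \infty$ (established above), $\|e+g\|_\infty = O(h^r)$ and $\|e-g\|_\infty = O(h^{r+1}+\tilde h^2)$, the cross term is $O(h^{2r+1} + \tilde h^2)$, hence absorbed into $O(\max\{h^{2r+2},\tilde h^2\})$. So it remains to handle $\K_m''(\varphi_m)(g,g)$. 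Here I replace the operator: by \eqref{eq:Ny5}, $\|[\K_m''(\varphi_m)-\K''(\varphi)](g,g)\|_\infty = O(\tilde h^2)$ provided $g \in C^\nu_{\Delta_m}[0,1]$, which holds since $g = (P_n-I)\varphi$ is piecewise smooth (on each $\Delta_j$, and a fortiori on each fine subinterval). Thus $\K_m''(\varphi_m)(g,g) = \K''(\varphi)(P_n\varphi - \varphi)^2 + O(\tilde h^2)$, and Lemma~\ref{lem:4} applied with $v = \varphi \in C^{r+2}[0,1]$ gives $\K''(\varphi)(P_n\varphi-\varphi)^2 = T(\varphi)h^{2r} + O(h^{2r+2})$. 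Combining the estimates yields
\begin{equation*}
\K_m''(\varphi_m)(z_n^G - \varphi_m)^2 = T(\varphi)h^{2r} + O\!\left(\max\{h^{2r+2},\tilde h^2\}\right),
\end{equation*}
as claimed.

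The main obstacle I anticipate is establishing the a priori bound $\|z_n^S - \varphi_m\|_\infty = O(\max\{h^{r+2},\tilde h^2\})$ (equivalently, that $z_n^G-\varphi_m$ agrees with $(P_n-I)\varphi$ up to order $h^{r+1}$ rather than merely $h^r$): naively \eqref{eq:proj_error1} only gives $z_n^G - \varphi_m = O(\max\{h^r,\tilde h^2\})$, which would leave the cross term $\K_m''(\varphi_m)(e-g, e+g)$ only $O(h^{2r})$ — not good enough. One needs to go back to \eqref{equation:1}: the first three terms on its right are quadratic in $z_n^G-\varphi_m$ (hence $O(h^{2r})$, which for $r\ge 1$ is $O(h^{r+1})$ unless $r=1$, where one needs the sharper $O(h^3)$ coming from \eqref{eq:discrete_it_Gal}) or involve $\mathcal{L}_m(I-P_n)\K_m'(\varphi_m)(z_n^G-\varphi_m)$, which one estimates using that $\K_m'(\varphi_m)(z_n^G-\varphi_m)$ is smooth enough for $(I-P_n)$ to contribute an $h^r$, while the last term is handled by Proposition~\ref{prop:3}. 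Carefully bookkeeping these orders — in particular checking the $r=1$ case separately where the $h^3$ bound from \eqref{eq:discrete_it_Gal} is exactly what is needed — is the delicate part; everything else is bilinear-form manipulation and direct appeal to the quoted Nyström error estimates.
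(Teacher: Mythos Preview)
Your strategy matches the paper's: write $z_n^G-\varphi_m$ as a dominant $(P_n-I)\varphi$-type piece plus a small remainder controlled by \eqref{eq:discrete_it_Gal}, expand the bilinear form, replace $\K_m''(\varphi_m)$ by $\K''(\varphi)$ via \eqref{eq:Ny5} at cost $O(\tilde h^{\,2})$, and invoke Lemma~\ref{lem:4}. The paper uses the cosmetically different decomposition $z_n^G-\varphi_m = P_n(z_n^S-\varphi_m) - (I-P_n)\varphi_m$ and expands $(a-b)^2=a^2-2ab+b^2$, deferring the replacement $\varphi_m\to\varphi$ to the $b^2$ term; your $e^2=g^2+(e-g)(e+g)$ is an equivalent rearrangement.

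The gap is in your cross-term bookkeeping. You bound $\|e-g\|_\infty=O(h^{r+1}+\tilde h^{\,2})$, obtain a cross term $O(h^{2r+1}+\tilde h^{\,2})$, and then assert this is absorbed into $O(\max\{h^{2r+2},\tilde h^{\,2}\})$. That absorption is false: $h^{2r+1}$ is neither $O(h^{2r+2})$ nor, under only the standing hypothesis $\tilde h^{\,2}\le h^r$, $O(\tilde h^{\,2})$. The paper sidesteps this by using the full strength of \eqref{eq:discrete_it_Gal}: for $r\ge 2$ one has $\|z_n^S-\varphi_m\|_\infty=O(\max\{h^{r+2},\tilde h^{\,2}\})$, so the remainder is $O(h^{r+2}+\tilde h^{\,2})$ rather than $O(h^{r+1}+\tilde h^{\,2})$, and the cross term is $O(h^r)\cdot O(\max\{h^{r+2},\tilde h^{\,2}\})=O(\max\{h^{2r+2},\tilde h^{\,2}\})$ on the nose. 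Your separate $r=1$ discussion also misreads \eqref{eq:discrete_it_Gal}: for $r=1$ it gives $\|z_n^S-\varphi\|_\infty=O(\max\{h^{2},\tilde h^{\,2}\})$, not $O(h^3)$, and the bootstrap you sketch through \eqref{equation:1} does not yield an $O(h^3)$ sup-norm bound on $z_n^S-\varphi_m$ either, since the term $\mathcal{L}_m(I-P_n)\varphi_m$ appearing there is only $O(h^{2})$ in sup norm (Proposition~\ref{prop:3} is a statement at partition points, not globally). The paper does not attempt any such bootstrap; it simply quotes \eqref{eq:discrete_it_Gal} and the projection error directly.
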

		\begin{proof}Note that
			\begin{align}\label{eq:4.32}
				z_n^G - \varphi_m & = P_n z_n^S - P_n \varphi_m - \varphi_m + P_n \varphi_m + \varphi \notag\\
				& = P_n \left(  z_n^S - \varphi_m \right) - \left( I - P_n \right)\varphi_m
			\end{align}
			Thus,
			\begin{align}\label{eq:4.29}
				\K_m''& (\varphi_m)(z_n^G - \varphi_m)^2 \notag\\
				& = \K_m''(\varphi_m)\left( P_n \left(  z_n^S - \varphi_m \right) \right)^2 - 2 \K_m''(\varphi_m)\left( P_n \left(  z_n^S - \varphi_m \right), \left( I - P_n \right)\varphi_m \right) \notag\\ 
				& ~~~ + \K_m''(\varphi_m)\left( \left( I - P_n \right)\varphi_m \right)^2 
			\end{align}
			Since $\norm{\K_m''(\varphi_m)} < \infty $ and $\norm{P_n} < \infty $, from \eqref{eq:discrete_Gal} it is easy to see that 
			\begin{equation}\label{eq:4.30}
				\norm{\K_m''(\varphi_m)\left( P_n \left(  z_n^S - \varphi_m \right) \right)^2}_\infty = O\left( \max\left\{ h^{2r+4}, \tilde{h}^4 \right\}  \right),
			\end{equation}
			\begin{equation}\label{eq:4.31}
				\norm{\K_m''(\varphi_m)\left( P_n \left(  z_n^S - \varphi_m \right), \left( I - P_n \right)\varphi_m \right)}_\infty = O\left( \tilde{h}^2  \max\left\{ h^{r+2}, \tilde{h}^2 \right\}  \right).
			\end{equation}
			Now we write
			{\small	\begin{align*}
					\K_m''(\varphi_m)\left( \left( I - P_n \right)\varphi_m \right)^2  & =  \left[ \K_m''(\varphi_m) -  \K''(\varphi) \right]\left( \left( I - P_n \right)\varphi_m \right)^2 \\
					& ~~~ + \K''(\varphi)\left( \left( I - P_n \right)\varphi_m \right)^2 \\ & =  \K''(\varphi)\left( \left( I - P_n \right)\varphi \right)^2 + \K''(\varphi)\left( \left( I - P_n \right) \left( \varphi - \varphi_m \right) \right)^2 \\
					& ~~~ + \left[ \K_m''(\varphi_m) -  \K''(\varphi) \right]\left( \left( I - P_n \right)\varphi_m \right)^2.
			\end{align*}}
			Since $\norm{\K_m''(\varphi_m)} < \infty $ and $\norm{P_n} < \infty $, from \eqref{Gr_Ny_error}, \eqref{eq:Ny5} and the above estimate, we obtain
			\begin{equation*}
				\K_m''(\varphi_m)\left( \left( I - P_n \right)\varphi_m \right)^2  = \K''(\varphi)\left( \left( I - P_n \right)\varphi \right)^2 + O\left(\tilde{h}^2 \right).
			\end{equation*}
			Therefore, from \eqref{eq:exp2} we obtain
			\begin{align*}
				\K_m''(\varphi_m)\left( \left( I - P_n \right)\varphi_m \right)^2  = {T} ( \varphi ) h^{ 2 r } + O \left(\max\left\{ h^{2 r + 2}, \tilde{h}^2 \right\}\right).
			\end{align*}
			Hence, the required result follows from \eqref{eq:4.29}, \eqref{eq:4.30}, \eqref{eq:4.31} and the above equation.
		\end{proof}
		From the above lemma, we obtain 
		\begin{equation}\label{eq:lem61}
			\left( I - {\mathcal{K}}'(\varphi) \right)^{-1}\K_m''(\varphi_m)(z_n^G - \varphi_m)^2  = \mathcal{T} ( \varphi ) h^{ 2 r } + O \left(\max\left\{ h^{2 r + 2}, \tilde{h}^2 \right\}\right),
		\end{equation}
		where $\mathcal{T}$ is defined by \eqref{eq:exp21}.
		
		\begin{lemma}\label{lem:6}
			If $\varphi \in C^{r + 2 } ( [0, 1])$, then
			\begin{equation*}
				\left[ I - {\mathcal{K}}_m'(\varphi_m) \right]^{-1} \K_m^{(3)}(\varphi_m)(z_n^G - \varphi_m)^3 = \left\{ {\begin{array}{ll}
						O \left(\max\left\{ h^{4}, \tilde{h}^2 \right\}\right), \;\;\;  r = 1,   \vspace*{1.5mm}\\
						O \left(\max\left\{ h^{3r}, \tilde{h}^6 \right\}\right), ~  r \geq 2.
				\end{array}}\right.
			\end{equation*}
		\end{lemma}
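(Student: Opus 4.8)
The plan is as follows. Since $\norm{\left(I - \mathcal{K}_m'(\varphi_m)\right)^{-1}} < \infty$ and $\norm{\mathcal{K}_m^{(3)}(\varphi_m)} \leq C_3 < \infty$, it is enough to estimate $\norm{\mathcal{K}_m^{(3)}(\varphi_m)(z_n^G - \varphi_m)^3}_\infty$. Writing $z_n^G - \varphi_m = (z_n^G - \varphi) + (\varphi - \varphi_m)$ and combining \eqref{eq:discrete_Gal} with \eqref{Gr_Ny_error} gives $\norm{z_n^G - \varphi_m}_\infty = O\!\left(\max\{h^r, \tilde h^2\}\right)$; hence for $r \geq 2$ the crude estimate $\norm{\mathcal{K}_m^{(3)}(\varphi_m)(z_n^G - \varphi_m)^3}_\infty \leq C_3\,\norm{z_n^G - \varphi_m}_\infty^{\,3} = O\!\left(\max\{h^{3r}, \tilde h^6\}\right)$ already gives the claim. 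For $r = 1$ this only yields $O(h^3)$, which is too weak, so a sharper argument is required; throughout the case $r = 1$ I will use the standing assumption $\tilde h^2 \leq h^r$, i.e. $\tilde h^2 \leq h$.

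For $r = 1$ I would start from the identity \eqref{eq:4.32}, $z_n^G - \varphi_m = P_n(z_n^S - \varphi_m) - (I - P_n)\varphi_m$, and expand the symmetric trilinear form $\mathcal{K}_m^{(3)}(\varphi_m)(z_n^G - \varphi_m)^3$ into the four contributions of $(a - b)^3$ with $a = P_n(z_n^S - \varphi_m)$ and $b = (I - P_n)\varphi_m$. Using $\norm{P_n} < \infty$ together with \eqref{eq:discrete_it_Gal} and \eqref{Gr_Ny_error} one has $\norm{a}_\infty = O(\max\{h^2, \tilde h^2\})$, and \eqref{eq:proj_error1} gives $\norm{b}_\infty = O(\max\{h, \tilde h^2\}) = O(h)$. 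Each term carrying at least one factor $a$ is then bounded, up to the constant $C_3$, by one of $\norm{a}_\infty^{3}$, $\norm{a}_\infty^{2}\norm{b}_\infty$, $\norm{a}_\infty\norm{b}_\infty^{2}$, and a short termwise check shows each of these is $O(\max\{h^4, \tilde h^2\})$ once $\tilde h^2 \leq h$ is invoked. The only delicate term is therefore $\mathcal{K}_m^{(3)}(\varphi_m)\big((I - P_n)\varphi_m\big)^3$.

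To handle that term, write $(I - P_n)\varphi_m = w + e$ with $w = (I - P_n)\varphi$ and $e = (I - P_n)(\varphi_m - \varphi)$. Then $\norm{w}_\infty = O(h)$ by \eqref{proj:error} and $\norm{e}_\infty = O(\tilde h^2)$ by \eqref{Gr_Ny_error} and $\norm{P_n} < \infty$, while $w = \varphi - P_n\varphi$ is piecewise $C^\nu$ on $\Delta^{(m)}$ (as $\varphi \in C^{3}[0,1]$ and $P_n\varphi$ is polynomial on each interval of the coarser partition $\Delta^{(n)}$, hence piecewise $C^\nu$ on the refinement), so $w \in C^{\nu}_{\Delta^{(m)}}[0,1]$. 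Expanding $\mathcal{K}_m^{(3)}(\varphi_m)(w + e)^3$, the three terms with a factor $e$ are $O\!\left(C_3\,\norm{w}_\infty^{2}\norm{e}_\infty\right) = O(h^2\tilde h^2) = O(\tilde h^2)$, and for the remaining term I split off the discretization, $\mathcal{K}_m^{(3)}(\varphi_m)w^3 = \big[\mathcal{K}_m^{(3)}(\varphi_m) - \mathcal{K}^{(3)}(\varphi)\big]w^3 + \mathcal{K}^{(3)}(\varphi)w^3$: the first summand is $O(\tilde h^2)$ by \eqref{eq:Ny35} (applicable since $w \in C^{\nu}_{\Delta^{(m)}}[0,1]$), and the second equals $-\,\mathcal{K}^{(3)}(\varphi)(P_n\varphi - \varphi)^3 = O(h^4)$ by \eqref{eq:exp4} of Lemma \ref{lem:4}. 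Collecting everything gives $\mathcal{K}_m^{(3)}(\varphi_m)(z_n^G - \varphi_m)^3 = O(\max\{h^4, \tilde h^2\})$, and applying the bounded operator $\big(I - \mathcal{K}_m'(\varphi_m)\big)^{-1}$ completes the proof.

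The main obstacle is precisely the last term when $r = 1$: the order-$h^4$ cancellation is available only through \eqref{eq:exp4}, which concerns the \emph{exact} operator $\mathcal{K}^{(3)}(\varphi)$ acting on $(P_n\varphi - \varphi)^3$, so the real work is to replace $\mathcal{K}_m^{(3)}(\varphi_m)$ by $\mathcal{K}^{(3)}(\varphi)$ and $\varphi_m$ by $\varphi$ inside a cubic expression while keeping each replacement of size $O(\tilde h^2)$ — this is where \eqref{eq:Ny35} and the $O(\tilde h^2)$ Nyström estimates enter — and then to verify that every mixed power $h^a\tilde h^b$ produced along the way is absorbed into $\max\{h^4, \tilde h^2\}$ using $\tilde h^2 \leq h$. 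For $r \geq 2$ no such care is needed and the operator-norm bound suffices.
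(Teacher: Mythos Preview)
Your proposal is correct and follows essentially the same route as the paper: the crude cubic bound for $r\ge 2$, and for $r=1$ the decomposition $z_n^G-\varphi_m = P_n(z_n^S-\varphi_m) - (I-P_n)\varphi_m$ from \eqref{eq:4.32}, with the critical term $\mathcal{K}_m^{(3)}(\varphi_m)\big((I-P_n)\varphi_m\big)^3$ reduced to $\mathcal{K}^{(3)}(\varphi)\big((I-P_n)\varphi\big)^3$ via \eqref{Gr_Ny_error} and \eqref{eq:Ny35}, and then handled by \eqref{eq:exp4}. The only cosmetic difference is that the paper invokes $\tilde h\le h$ (from $m=pn$) to get cleaner bounds $O(h^6),O(h^5),O(h^4)$ for the three mixed terms, whereas you track the $\tilde h$ contributions separately; both lead to the same conclusion.
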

		\begin{proof}
			First, we consider the case when $r \geq 2$. Since $\norm{\left[ I - {\mathcal{K}}_m'(\varphi_m) \right]^{-1}} < \infty$ and $\norm{\K_m^{(3)}(\varphi_m)} < \infty$, from \eqref{eq:discrete_Gal} we obtain
			\begin{equation*}
				\norm{\left[ I - {\mathcal{K}}_m'(\varphi_m) \right]^{-1} \K_m^{(3)}(\varphi_m)(z_n^G - \varphi_m)^3}_\infty = O \left(\max\left\{ h^{3r}, \tilde{h}^6 \right\}\right).
			\end{equation*}
			Now consider the case, when $r = 1$. We rewrite \eqref{eq:4.32} as $$ (z_n^G - \varphi_m)^3 =  \left[ P_n \left(  z_n^S - \varphi_m \right) - \left( I - P_n \right)\varphi_m \right]^3.$$
			Thus
			{\small	\begin{align}\label{eq:4.36}
					\K_m^{(3)}(\varphi_m)(z_n^G - \varphi_m)^3 & = \K_m^{(3)}(\varphi_m)\left(  P_n \left(  z_n^S - \varphi_m \right) \right)^3 - \K_m^{(3)}(\varphi_m)\left(  \left( I - P_n \right)\varphi_m \right)^3 \nonumber \\
					& ~~~ - \K_m^{(3)}(\varphi_m) \left( \left(  P_n \left(  z_n^S - \varphi_m \right) \right)^2, \left( I - P_n \right)\varphi_m \right) \nonumber\\
					& ~~~ + \K_m^{(3)}(\varphi_m) \left(  P_n \left(  z_n^S - \varphi_m \right) , \left( \left( I - P_n \right)\varphi_m \right)^2 \right).
			\end{align}}
			Since $\norm{\K_m^{(3)}(\varphi_m)} < \infty $ and $\norm{P_n} < \infty $, from \eqref{eq:discrete_it_Gal} and \eqref{eq:proj_error1} we obtain
			\begin{equation*}
				\K_m^{(3)}(\varphi_m)\left(  P_n \left(  z_n^S - \varphi_m \right) \right)^3 = O \left( h^{6} \right),
			\end{equation*}
			\begin{equation*}
				\K_m^{(3)}(\varphi_m) \left( \left(  P_n \left(  z_n^S - \varphi_m \right) \right)^2, \left( I - P_n \right)\varphi_m \right) = O \left( h^{5} \right),
			\end{equation*}
			\begin{equation*}
				\K_m^{(3)}(\varphi_m) \left(  P_n \left(  z_n^S - \varphi_m \right) , \left( \left( I - P_n \right)\varphi_m \right)^2 \right) \\ = O \left( h^{4} \right).
			\end{equation*}
			Note that, we have used the fact $\tilde{h} \leq h$ in the above three expressions. 
			On the other hand, from \eqref{Gr_Ny_error}, \eqref{eq:Ny35} we have
			\begin{equation*}
				\K_m^{(3)}(\varphi_m)\left(  \left( I - P_n \right)\varphi_m \right)^3  = \K^{(3)}(\varphi)\left(\left( I - P_n \right)\varphi \right)^3 + O\left(\tilde{h}^2 \right).
			\end{equation*}
			From \eqref{eq:exp4}, it follows that
			\begin{equation}\label{eq:4.37}
				\K_m^{(3)}(\varphi_m)\left(  \left( I - P_n \right)\varphi_m \right)^3  =  	 O \left(\max\left\{ h^{4}, \tilde{h}^2 \right\}\right).
			\end{equation}
			Now, combining the results \eqref{eq:4.36} - \eqref{eq:4.37}, we obtain
			\begin{equation*}
				\K_m^{(3)}(\varphi_m)(z_n^G - \varphi_m)^3 = O \left(\max\left\{ h^{4}, \tilde{h}^2 \right\}\right).
			\end{equation*}
			Therefore \begin{equation*}
				\left[ I - {\mathcal{K}}_m'(\varphi_m) \right]^{-1}\K_m^{(3)}(\varphi_m)(z_n^G - \varphi_m)^3 = O \left(\max\left\{ h^{4}, \tilde{h}^2 \right\}\right).
			\end{equation*}
			Hence follows the result.
		\end{proof}

		\begin{proposition}\label{prop:4}
			Let $\varphi \in C^{r + 2 } ( [0, 1])$. Then for $r \geq 1$,
			\begin{multline*}
				\left[ I - {\mathcal{K}}_m'(\varphi_m) \right]^{-1}  \left[\mathcal{K}_m(z_n^G) -  \mathcal{K}_m(\varphi_m) - {\mathcal{K}}_m'(\varphi_m) (z_n^G - \varphi_m) \right]	(s)\\
				= \frac{1}{2} \mathcal{T} ( \varphi )(s) h^{ 2 r } + O \left(\max\left\{ h^{2 r + 2}, \tilde{h}^2 \right\}\right),
			\end{multline*}
			for all $s \in [0, 1]$.
		\end{proposition}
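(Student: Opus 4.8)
The plan is to Taylor--expand the Nystr\"om operator $\mathcal{K}_m$ about $\varphi_m$ and to isolate its leading quadratic term. Set $\delta_n = z_n^G - \varphi_m$, so that $\norm{\delta_n}_\infty = O\!\left(\max\{h^r,\tilde h^2\}\right)$ by \eqref{eq:discrete_Gal} and \eqref{Gr_Ny_error}. Since $\mathcal{K}_m$ is four times Fr\'echet differentiable on $B(\varphi,\epsilon)$, applying the integral form of Taylor's theorem twice, together with the Lipschitz estimate of Lemma \ref{lip_2} and its analogue for $\mathcal{K}_m^{(3)}$ (which holds because $D^{(0,0,4)}\kappa \in C(\Omega)$), one obtains
\begin{equation*}
	\mathcal{K}_m(z_n^G) - \mathcal{K}_m(\varphi_m) - \mathcal{K}_m'(\varphi_m)\delta_n = \tfrac{1}{2}\,\mathcal{K}_m''(\varphi_m)\delta_n^2 + \tfrac{1}{6}\,\mathcal{K}_m^{(3)}(\varphi_m)\delta_n^3 + \mathcal{R}_n,
\end{equation*}
with $\norm{\mathcal{R}_n}_\infty \le C\,\norm{\delta_n}_\infty^4 = O\!\left(\max\{h^{4r},\tilde h^8\}\right) = O\!\left(\max\{h^{2r+2},\tilde h^2\}\right)$ for every $r\ge 1$.

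Next I would apply $\left[ I - {\mathcal{K}}_m'(\varphi_m) \right]^{-1}$, which is bounded uniformly in $m$ by \cite[Proposition 4.2]{RPK-GR1}, to both sides and estimate the three resulting contributions. The remainder $\left[ I - {\mathcal{K}}_m'(\varphi_m) \right]^{-1}\mathcal{R}_n$ is $O\!\left(\max\{h^{2r+2},\tilde h^2\}\right)$. For the cubic contribution, Lemma \ref{lem:6} gives $\left[ I - {\mathcal{K}}_m'(\varphi_m) \right]^{-1}\mathcal{K}_m^{(3)}(\varphi_m)\delta_n^3 = O\!\left(\max\{h^4,\tilde h^2\}\right)$ when $r=1$ and $O\!\left(\max\{h^{3r},\tilde h^6\}\right)$ when $r\ge 2$; since $3r\ge 2r+2$ for $r\ge 2$, in either case this is $O\!\left(\max\{h^{2r+2},\tilde h^2\}\right)$.

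It remains to show that the quadratic contribution produces the stated main term. By Lemma \ref{lem:5}, $\mathcal{K}_m''(\varphi_m)\delta_n^2 = T(\varphi)h^{2r} + O\!\left(\max\{h^{2r+2},\tilde h^2\}\right)$, where $T(\varphi) = \big(\int_0^1 J_r(\tau)^2\,d\tau\big)\mathcal{K}''(\varphi)(\varphi^{(r)})^2$; applying the uniformly bounded operator $\left[ I - {\mathcal{K}}_m'(\varphi_m) \right]^{-1}$ it therefore suffices to evaluate $\left[ I - {\mathcal{K}}_m'(\varphi_m) \right]^{-1}\mathcal{K}''(\varphi)(\varphi^{(r)})^2$. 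Using the resolvent identity \eqref{eq:res-id}, this differs from $\mathcal{M}(\varphi^{(r)})^2 = \left[ I - {\mathcal{K}}'(\varphi) \right]^{-1}\mathcal{K}''(\varphi)(\varphi^{(r)})^2$ by the two terms $\left[ I - {\mathcal{K}}'(\varphi) \right]^{-1}\big[{\mathcal{K}}_m'(\varphi_m) - {\mathcal{K}}_m'(\varphi)\big]\left[ I - {\mathcal{K}}_m'(\varphi_m) \right]^{-1}\mathcal{K}''(\varphi)(\varphi^{(r)})^2$ and $\left[ I - {\mathcal{K}}'(\varphi) \right]^{-1}\big[{\mathcal{K}}_m'(\varphi) - {\mathcal{K}}'(\varphi)\big]\left[ I - {\mathcal{K}}_m'(\varphi_m) \right]^{-1}\mathcal{K}''(\varphi)(\varphi^{(r)})^2$, each of which is $O(\tilde h^2)$ by the Nystr\"om consistency estimates \eqref{eq:Ny2}, \eqref{eq:Ny3} applied to the fixed, sufficiently smooth function $\left[ I - {\mathcal{K}}_m'(\varphi_m) \right]^{-1}\mathcal{K}''(\varphi)(\varphi^{(r)})^2$. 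Since $\big(\int_0^1 J_r(\tau)^2\,d\tau\big)\mathcal{M}(\varphi^{(r)})^2 = \mathcal{T}(\varphi)$ by \eqref{eq:exp21}, the quadratic contribution equals $\tfrac{1}{2}\mathcal{T}(\varphi)h^{2r} + O\!\left(\max\{h^{2r+2},\tilde h^2\}\right)$, and summing the three contributions yields the proposition.

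The main obstacle is the cubic term when $r=1$: there the crude bound $\norm{\mathcal{K}_m^{(3)}(\varphi_m)\delta_n^3}_\infty = O(h^3)$ is one power too weak, and one must invoke the finer cancellation of Lemma \ref{lem:6} (which itself rests on $\mathcal{K}^{(3)}(\varphi)(P_n v - v)^3 = O(h^4)$ from Lemma \ref{lem:4}, obtained after rewriting $\delta_n = P_n(z_n^S - \varphi_m) - (I - P_n)\varphi_m$). A secondary technical point is the bookkeeping in the resolvent--identity step, where one must check that the element to which \eqref{eq:Ny2}--\eqref{eq:Ny3} are applied lies in the required piecewise--smooth class, as established in \cite{RPK-GR1}.
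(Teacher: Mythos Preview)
Your proof is correct and follows essentially the same route as the paper: Taylor expand $\mathcal{K}_m$ to fourth order, bound the quartic remainder trivially, invoke Lemma~\ref{lem:6} for the cubic term, and combine Lemma~\ref{lem:5} with the resolvent identity \eqref{eq:res-id} for the quadratic term. The only cosmetic difference is the order of operations on the quadratic piece: the paper first passes from $[I-\mathcal{K}_m'(\varphi_m)]^{-1}$ to $[I-\mathcal{K}'(\varphi)]^{-1}$ via \eqref{eq:res-id} and then applies \eqref{eq:lem61}, whereas you first extract $T(\varphi)h^{2r}$ via Lemma~\ref{lem:5} and then apply the resolvent identity to the fixed leading term---both orderings yield the same estimate.
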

		\begin{proof}
			Applying the generalized Taylor's series expansion of $\K_m$ about $\varphi_m$ in the neighbourhood ${B}(\varphi, \epsilon)$, we obtain
			\begin{multline}\label{eq:4.41}
				\mathcal{K}_m(z_n^G) -  \mathcal{K}_m(\varphi_m) - {\mathcal{K}}_m'(\varphi_m) (z_n^G - \varphi_m)  \\ 
				= \frac{1}{2} \K_m''(\varphi_m)(z_n^G - \varphi_m)^2 + \frac{1}{6} \K_m^{(3)}(\varphi_m)(z_n^G - \varphi_m)^3  + \mathcal{R}_{4, m} \left( z_n^G - \varphi_m \right),
			\end{multline}
			where
			$$\mathcal{R}_{4, m} \left( z_n^G - \varphi_m \right) =  \int_0^1 \frac{(1 - \theta)^3}{3!}\mathcal{K}_m^{(4)} \left(\varphi_m + \theta (z_n^G - \varphi_m) \right) (z_n^G - \varphi_m)^4
			\; d \theta.$$
			Note that for any $x \in {B}(\varphi, \epsilon)$, $v \in \X$,
			\begin{equation}\nonumber
				\mathcal{K}_m^{(4)} (x) v^4(s)  =  \frac{h}{p} \sum_{j=1}^n  \sum_{q=1}^\rho \sum_{\nu =1}^p w_q \;  \frac{\partial^4 \kappa}{\partial u^4} { \left( s,   \mu_{q \nu}^j, x{\left( \mu_{q \nu}^j \right)}\right)} v^4{\left( \mu_{q \nu}^j \right)}, \quad  s \in [0, 1].
			\end{equation}
			It follows that
			\begin{eqnarray*}
				\norm{\mathcal{K}_m^{(4)} (x) v^4}_\infty \leq \left( \sup_{\stackrel {s, t \in [0, 1]}{\lvert u \rvert \leq \|\varphi \|_\infty + \epsilon}} 	\left\lvert  \frac{\partial^4 \kappa}{\partial u^4}(s, t, u) \right\rvert \right) \norm{v}^4_\infty = C_5  \norm{v}^4_\infty.
			\end{eqnarray*}
			Since $\varphi_m$ and $z_n^G \in {B}(\varphi, \epsilon)$, $\varphi_m + \theta (z_n^G - \varphi_m) \in {B}(\varphi, \epsilon)$ and therefore
			\begin{eqnarray*}
				\norm{\mathcal{K}_m^{(4)} \left(\varphi_m + \theta (z_n^G - \varphi_m) \right) (z_n^G - \varphi_m)^4}_\infty \leq C_5 \norm{z_n^G - \varphi_m}_\infty^4 = O\left( h^{4r} \right).
			\end{eqnarray*}
			It follows that 
			\begin{equation*}
				\mathcal{R}_{4, m} \left( z_n^G - \varphi_m \right) = O\left( h^{4r} \right).
			\end{equation*}
			Using the resolvent identity \eqref{eq:Ny4} and \eqref{eq:res-id}, we obtain
			\begin{multline*}
				\left[ I - {\mathcal{K}}_m'(\varphi_m) \right]^{-1} \K_m''(\varphi_m)(z_n^G - \varphi_m)^2   = 	\left[ I - {\mathcal{K}}'(\varphi) \right]^{-1} \K_m''(\varphi_m)(z_n^G - \varphi_m)^2 + O \left(  \tilde{h}^2 \right).
			\end{multline*}
			By \eqref{eq:lem61}, it follows that
			\begin{equation*}
				\left[ I - {\mathcal{K}}_m'(\varphi_m) \right]^{-1} \K_m''(\varphi_m)(z_n^G - \varphi_m)^2  = \mathcal{T} ( \varphi ) h^{ 2 r } + O \left(\max\left\{ h^{2 r + 2}, \tilde{h}^2 \right\}\right).
			\end{equation*}
			From the Lemma \ref{lem:6}, we have
			\begin{equation}\label{eq:4.44}
				\left[ I - {\mathcal{K}}_m'(\varphi_m) \right]^{-1} \K_m^{(3)}(\varphi_m)(z_n^G - \varphi_m)^3 = \left\{ {\begin{array}{ll}
						O \left(\max\left\{ h^{4}, \tilde{h}^2 \right\}\right), \;\;\;  r = 1,   \vspace*{1.5mm}\\
						O \left(\max\left\{ h^{3r}, \tilde{h}^6 \right\}\right), ~  r \geq 2.
				\end{array}}\right.
			\end{equation}
			Combining the results from \eqref{eq:4.41} to \eqref{eq:4.44}, we obtain for $r \geq 1$,
			\begin{multline*}
				\left[ I - {\mathcal{K}}_m'(\varphi_m) \right]^{-1}  \left[\mathcal{K}_m(z_n^G) -  \mathcal{K}_m(\varphi_m) - {\mathcal{K}}_m'(\varphi_m) (z_n^G - \varphi_m) \right]	\\
				= \frac{1}{2} \mathcal{T} ( \varphi ) h^{ 2 r } + O \left(\max\left\{ h^{2 r + 2}, \tilde{h}^2 \right\}\right),
			\end{multline*}
			which completes the proof.
		\end{proof}

		\begin{lemma}\label{lem:4.4}
			If  $v \in \X$, then for $r \geq 1$,
			\begin{eqnarray}\notag
				\max_{0 \leq i \leq n} \left \lvert \mathcal{K}_m'(\varphi_m)  (I - P_n ) v (t_i) \right \rvert
				& \leq &  C_7 \norm{(I - P_n )v}_\infty  h^{r},
			\end{eqnarray}
			where $C_7$ is a constant independent of $h$.
		\end{lemma}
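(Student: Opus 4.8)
The plan is to replace $\mathcal{K}_m'(\varphi_m)$ by $\mathcal{K}_m'(\varphi)$ up to a harmless correction, and then to exploit that, on each subinterval $\Delta_j$ of $\Delta^{(n)}$, the restriction of $(I-P_n)v$ is discretely orthogonal to the polynomials of degree $\le r-1$. First I would write $\mathcal{K}_m'(\varphi_m)=\mathcal{K}_m'(\varphi)+\bigl(\mathcal{K}_m'(\varphi_m)-\mathcal{K}_m'(\varphi)\bigr)$. By \eqref{eq:Ny3} the second operator has norm $O(\tilde h^2)$, so its value at any node $t_i$ applied to $(I-P_n)v$ is bounded by $O(\tilde h^2)\,\norm{(I-P_n)v}_\infty$; since the fine partition is calibrated so that $\tilde h^2\le h^r$ (the Note in Section~3), this is $O(h^r)\,\norm{(I-P_n)v}_\infty$, which is within the target. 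It therefore remains to estimate $\mathcal{K}_m'(\varphi)(I-P_n)v(t_i)$.

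Next I would regroup the Nyström sum defining $\mathcal{K}_m'(\varphi)$ subinterval by subinterval. On $\Delta_j$ the restriction of $(I-P_n)v$ equals $(I-P_{n,j})v$, and since the basic rule has degree of precision $3r$ the family $\{\varphi_{j,\eta}\}_{\eta=0}^{r-1}$ is a discretely orthonormal basis of $\mathcal{P}_{r,\Delta_j}$ for the inner product \eqref{eq:dis_ip}, whence $\inp{(I-P_{n,j})v}{w}_{\Delta_j,m}=0$ for every $w\in\mathcal{P}_{r,\Delta_j}$. Writing $\ell=D^{(0,0,1)}\kappa$, this gives, for any polynomials $q_{i,j}\in\mathcal{P}_{r,\Delta_j}$,
\[
\mathcal{K}_m'(\varphi)(I-P_n)v(t_i)=\sum_{j=1}^{n}\inp{\ell(t_i,\cdot,\varphi(\cdot))-q_{i,j}}{(I-P_{n,j})v}_{\Delta_j,m}.
\]
I would choose $q_{i,j}$ to be a local Taylor polynomial of the kernel: on $\Delta_j$ the map $t\mapsto\ell(t_i,t,\varphi(t))$ coincides with $\ell_1(t_i,t,\varphi(t))$ if $j\le i$ and with $\ell_2(t_i,t,\varphi(t))$ if $j\ge i+1$; since $\ell_1,\ell_2$ are $C^r$ on $\Omega_1,\Omega_2$ and $\varphi\in C^r[0,1]$ (which holds under the standing hypotheses, cf.\ the remark in Section~2), this is a $C^r$ function of $t$ on the closed interval $\Delta_j$ whose $r$-th derivative is bounded, by the chain rule, by a constant independent of $h,m,i,j$. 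Taking $q_{i,j}$ to be its degree-$(r-1)$ Taylor polynomial about $t_{j-1}$, Taylor's theorem yields $\norm{\ell(t_i,\cdot,\varphi(\cdot))-q_{i,j}}_{\Delta_j,\infty}\le C_6 h^r$ with $C_6$ independent of $h,m,i,j$.

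Finally I would estimate the discrete inner product crudely: with $W=\sum_{q=1}^{\rho}\lvert w_q\rvert$ and $\tilde h=h/p$ one has $\lvert\inp{g}{(I-P_{n,j})v}_{\Delta_j,m}\rvert\le Wh\,\norm{g}_{\Delta_j,\infty}\,\norm{(I-P_{n,j})v}_{\Delta_j,\infty}$. Applying this with $g=\ell(t_i,\cdot,\varphi(\cdot))-q_{i,j}$, using $\norm{(I-P_{n,j})v}_{\Delta_j,\infty}\le\norm{(I-P_n)v}_\infty$ and the previous paragraph, and summing over the $n=1/h$ subintervals gives $\lvert\mathcal{K}_m'(\varphi)(I-P_n)v(t_i)\rvert\le WC_6 h^r\norm{(I-P_n)v}_\infty$ (the factor $nh=1$ being what turns $n\cdot h\cdot h^r$ into $h^r$). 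Combining with the first paragraph and taking the maximum over $i$ yields the lemma, with $C_7$ a constant independent of $h$.

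The step I expect to be the real content, rather than bookkeeping, is the very first one: the Nyström solution $\varphi_m$ is only continuous, so $t\mapsto\ell(t_i,t,\varphi_m(t))$ is not smooth on $\Delta_j$ and cannot be Taylor-expanded there, and one genuinely has to pass to the exact solution $\varphi$ first, absorbing the discrepancy through the calibration $\tilde h^2\le h^r$. Once that reduction is made, the rest is the familiar ``superconvergence at the partition points via orthogonality'' argument, the gain of $h^r$ coming from the single local polynomial subtraction together with $nh=1$.
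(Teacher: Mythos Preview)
Your proposal is correct and follows essentially the same route as the paper. The only cosmetic difference is that, after the reduction to $\mathcal{K}_m'(\varphi)$, the paper invokes self-adjointness of $I-P_{n,j}$ with respect to $\inp{\cdot}{\cdot}_{\Delta_j,m}$ to replace $\ell_{*,t_i}$ by $(I-P_{n,j})\ell_{*,t_i}$ and then applies the projection error bound \eqref{proj:error1}, whereas you subtract an explicit Taylor polynomial $q_{i,j}$ using the same orthogonality; both yield the local factor $h^r$ and the global estimate after summing $n\cdot h=1$.
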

		\begin{proof} We write \begin{equation}\label{eq:4.19}
				\mathcal{K}_m'(\varphi_m)  (I - P_n ) v = \mathcal{K}_m'(\varphi)  (I - P_n ) v  + [\mathcal{K}_m'(\varphi_m) - \mathcal{K}_m'(\varphi)]  (I - P_n ) v.
			\end{equation}
			For fixed $ s \in [0, 1]$, let
			\begin{equation}\nonumber
				\ell_{*, s} (t) = \ell_* (s, t) = \ell (s, t, \varphi (t)) = \frac{\partial \kappa}{\partial u} (s, t, \varphi(t)), \quad t \in [0, 1].
			\end{equation} From the definition of $\mathcal{K}_m'(\varphi)$ and the discrete inner product, we have
			\begin{eqnarray}\nonumber
				\mathcal{K}_m'(\varphi)  (I - P_n ) v (s) 
				& = & \sum_{j=1}^n \inp {  \ell_{*, s}} {(I - P_{n,j}  )v}_{\Delta_j, m}. 
			\end{eqnarray} 
			Since $P_{n, j}$ is self-adjoint on $C(\Delta_j)$, so as $I - P_{n, j}$. Therefore
			\begin{eqnarray} \nonumber
				\mathcal{K}_m'(\varphi) (\I - P_n ) v (s) 
				& = & \sum_{j=1}^n \inp {(I - P_{n,j}) \ell_{*, s}} {(I - P_{n,j}) v}_{\Delta_j, m}.
			\end{eqnarray} 
			Note that, if  $ s = t_i$ for some $i \in \left\{ 0, 1, \ldots, n \right\}$, then 
			$\ell_{*, s} \in C^r[t_{j-1}, t_j]$ for all $j = 1, \ldots, n.$
			Hence from \eqref{proj:error1},
			\begin{eqnarray*}
				\|(I - P_{n,j}  ) \ell_{*, t_i}\|_{\Delta_j, \infty} & \leq & C_1 \left ( \sup_{t \in [t_{j-1}, t_j] } \lvert D^{(0, r)} \ell_* (t_i, t)  \rvert \right ) h^{r} 
			\end{eqnarray*}
			Thus,
			\begin{align*}
				\max_{0 \leq i \leq n} \left \lvert \mathcal{K}_m'(\varphi)  (I - P_n ) v (t_i) \right \rvert & \leq   \sum_{j=1}^n \norm{(I - P_{n,j}  ) \ell_{m, s}}_{\Delta_j, \infty} 
				\| (I - P_{n,j}) v\|_{\Delta_j, \infty} h \\
				& \leq   C_6 \norm{(I-P_n)v}_\infty   h^{ r },
			\end{align*}
			where $C_6$ is a constant independent of $h$. Now, from \eqref{eq:Ny3}, \eqref{eq:4.19} and the above estimate, we obtain
			$$ \max_{0 \leq i \leq n} \left \lvert \mathcal{K}_m'(\varphi_m)  (I - P_n ) v (t_i) \right \rvert
			\leq   C_7 \norm{(I - P_n )v}_\infty \max\left\{ h^{r}, \tilde{h}^2\right\},$$
			where $C_7 = C_4 + C_6$. Since $h^{r} \geq  \tilde{h}^2$, the result follows.
		\end{proof}
		Recall that $\mathcal{L}_m = \left[ I - {\mathcal{K}}_m'(\varphi_m) \right]^{-1} {\mathcal{K}}_m'(\varphi_m)$. Therefore, the proof of the following result is similar to that of the above lemma.
		\begin{Corollary}\label{cor:4.2}
			If  $v \in \X$, then for $r \geq 1$,
			\begin{eqnarray}\notag
				\max_{0 \leq i \leq n} \lvert \mathcal{L}_m  (I - P_n ) v (t_i) \rvert
				& \leq &  C_8 \norm{(I - P_n )v}_\infty  h^{r},
			\end{eqnarray}
			where $C_8$ is a constant independent of $h$.
		\end{Corollary}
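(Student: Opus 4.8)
The plan is to imitate the proof of Lemma~\ref{lem:4.4}, the one new ingredient being a ``discrete kernel'' representation of $\mathcal{L}_m$. First I would observe that $\mathcal{L}_m w = \left(I - \mathcal{K}_m'(\varphi_m)\right)^{-1}\mathcal{K}_m'(\varphi_m)w$ depends on $w\in\X$ only through the finitely many values $\left\{ w\!\left(\mu_{q\nu}^j\right)\right\}$, because $\mathcal{K}_m'(\varphi_m)w$ already does. Hence $\mathcal{L}_m$ is a degenerate operator, and there is a family $\tilde{\ell}_{m,s}$, defined on the quadrature nodes, with
\[
\mathcal{L}_m w(s) = \sum_{j=1}^{n}\inp{\tilde{\ell}_{m,s}}{w}_{\Delta_j, m}, \qquad w\in\X,\ s\in[0,1].
\]
Reading off the coefficient of each node value in the identity $\mathcal{L}_m = \mathcal{K}_m'(\varphi_m) + \mathcal{K}_m'(\varphi_m)\mathcal{L}_m$ shows that $\tilde{\ell}_{m,\cdot}(t)$, as a function of the first variable, satisfies $\left(I - \mathcal{K}_m'(\varphi_m)\right)\tilde{\ell}_{m,\cdot}(t) = \ell_{m,\cdot}(t)$, where $\ell_{m,s}(t) = D^{(0,0,1)}\kappa\!\left(s,t,\varphi_m(t)\right)$; that is, $\tilde{\ell}_{m,s}$ is precisely the Nystr\"om approximation, via the composite rule \eqref{eq:Comp_Num_int}, of the continuous resolvent kernel $\tilde{\ell}$ of $\mathcal{L}$.

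The second step, which I expect to be the main obstacle, is to prove that $\tilde{\ell}_{m,t_i}(\mu) = \tilde{\ell}(t_i,\mu) + O\!\left(\tilde{h}^2\right)$ uniformly over the quadrature nodes $\mu$. Subtracting the discrete resolvent relation from its continuous counterpart $\tilde{\ell}(s,t) = \ell_*(s,t) + \int_0^1 \ell_*(s,\sigma)\,\tilde{\ell}(\sigma,t)\,d\sigma$ (with $\ell_*(s,t)=D^{(0,0,1)}\kappa(s,t,\varphi(t))$), one finds that $\tilde{\ell}_{m,\cdot}(t) - \tilde{\ell}(\cdot,t)$ satisfies $\left(I - \mathcal{K}_m'(\varphi_m)\right)\!\left(\tilde{\ell}_{m,\cdot}(t) - \tilde{\ell}(\cdot,t)\right) = r_m(\cdot,t)$, where the residual $r_m$ is the sum of the term $\ell_{m,s}(t)-\ell_*(s,t)$, which is $O(\norm{\varphi_m-\varphi}_\infty)=O(\tilde{h}^2)$ by \eqref{Gr_Ny_error}; the term $\left(\mathcal{K}_m'(\varphi_m)-\mathcal{K}_m'(\varphi)\right)\tilde{\ell}(\cdot,t)$, which is $O(\tilde{h}^2)$ by \eqref{eq:Ny3}; and, for $s=t_i$ a partition point, the composite-quadrature error for $\int_0^1 \ell_*(t_i,\sigma)\tilde{\ell}(\sigma,t)\,d\sigma$, whose integrand is smooth on each subinterval of $\Delta^{(m)}$ except for the single interior corner at $\sigma=t$, so that this error is $O(\tilde{h}^2)$ exactly as in \eqref{eq:Ny2}. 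Since $\left(I - \mathcal{K}_m'(\varphi_m)\right)^{-1}$ is bounded uniformly in $m$, inverting gives $\tilde{\ell}_{m,\cdot}(t) - \tilde{\ell}(\cdot,t) = O(\tilde{h}^2)$, hence the claim. This is a genuine, if by now routine, Nystr\"om convergence estimate, and the bookkeeping of the Green-type integrand near the interior node $\sigma=t$ (and the Gronwall-type inversion) is where the real work sits.

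With these two facts the argument of Lemma~\ref{lem:4.4} carries over. For a partition point $t_i$, writing $(I-P_n)v|_{\Delta_j} = (I-P_{n,j})v$ and using the self-adjointness and idempotency of $P_{n,j}$ with respect to $\inp{\cdot}{\cdot}_{\Delta_j,m}$,
\[
\mathcal{L}_m(I-P_n)v(t_i) = \sum_{j=1}^{n}\inp{\tilde{\ell}_{m,t_i}}{(I-P_{n,j})v}_{\Delta_j,m} = \sum_{j=1}^{n}\inp{(I-P_{n,j})\tilde{\ell}_{m,t_i}}{(I-P_{n,j})v}_{\Delta_j,m}.
\]
The only non-smoothness of $\tilde{\ell}$ being the corner along $s=t$, one has $\tilde{\ell}(t_i,\cdot)\in C^{r}(\Delta_j)$ for every $j$ (this is exactly where $t_i$ being a partition point is used: for a generic $s$ the corner would sit in the interior of some $\Delta_j$ and the projection error there would only be $O(h)$), so \eqref{proj:error1} gives $\norm{(I-P_{n,j})\tilde{\ell}(t_i,\cdot)}_{\Delta_j,\infty}=O(h^r)$; combined with Step~2, $\norm{P_{n,j}}<\infty$, and the standing choice $\tilde{h}^2\le h^{r}$, this yields $\norm{(I-P_{n,j})\tilde{\ell}_{m,t_i}}_{\Delta_j,\infty}=O(h^{r})$ (the norm taken over the nodes of $\Delta_j$). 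Bounding the $j$-th summand by $\norm{(I-P_{n,j})\tilde{\ell}_{m,t_i}}_{\Delta_j,\infty}\,\norm{(I-P_{n,j})v}_{\Delta_j,\infty}$ times the total quadrature weight on $\Delta_j$, which equals $\tilde{h}\sum_{\nu,q}|w_q| = h\sum_q|w_q|$, then using $\norm{(I-P_{n,j})v}_{\Delta_j,\infty}\le\norm{(I-P_n)v}_\infty$ and summing the $n=1/h$ terms, gives $\left\lvert\mathcal{L}_m(I-P_n)v(t_i)\right\rvert\le C_8\,\norm{(I-P_n)v}_\infty\,h^{r}$ uniformly in $i$, which is the assertion.
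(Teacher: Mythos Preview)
Your proposal is correct and follows the same approach the paper intends: the paper's ``proof'' is the single remark that the argument is similar to Lemma~\ref{lem:4.4}, and the later proof of Proposition~\ref{prop:5} explicitly uses the discrete resolvent kernel $\ell_{m,t_i}$ of $\mathcal{L}_m$ exactly as you do, relying on $\norm{(I-P_{n,j})\ell_{m,t_i}}_{\Delta_j,\infty}=O(h^r)$ as an intermediate fact from this corollary. The only slip is in Step~2: since you invert $\bigl(I-\mathcal{K}_m'(\varphi_m)\bigr)$ in the sup norm you actually need the quadrature-error part of $r_m(s,t)$ bounded by $O(\tilde h^{2})$ for \emph{all} $s$, not just $s=t_i$; this holds by the same corner-counting argument (the integrand $\sigma\mapsto\ell_*(s,\sigma)\tilde{\ell}(\sigma,t)$ has Green's-type corners only at $\sigma=s$ and $\sigma=t$), so the fix is immediate.
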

		
		\begin{lemma}\label{lem:4.5} Let $v \in \X$. If $r = 1$, that is, when the range of $P_n$ is the space of piecewise polynomials of degree zero, then
			\begin{equation*}
				\norm{(I -P_n)\mathcal{K}_m''(\varphi) (v, v)}_\infty \leq C_9 h \norm{v}^2_\infty,
			\end{equation*}
			where $C_9$ is a constant independent of $h$.
		\end{lemma}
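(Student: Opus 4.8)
The plan is to reduce the claim to a single observation: the function $w := \mathcal{K}_m''(\varphi)(v,v)$, regarded as a function of the variable $s$, is Lipschitz continuous on $[0,1]$ with a Lipschitz constant of the form $C\norm{v}_\infty^2$, with $C$ \emph{independent of $h$}. Once this is available, the estimate follows at once: when $r=1$ the restriction of $P_n$ to each $\Delta_j$ is nothing but the discrete averaging projection onto the constant functions, and averaging a Lipschitz function against nodes lying in an interval of length $h$ incurs an error of size $O(h)$.

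First I would write out the explicit form of the Nystr\"om second derivative, namely
\[
w(s) \;=\; \frac{h}{p}\sum_{j=1}^{n}\sum_{q=1}^{\rho}\sum_{\nu=1}^{p} w_q\, \lambda\!\left(s,\mu_{q\nu}^j,\varphi(\mu_{q\nu}^j)\right) v(\mu_{q\nu}^j)^2, \qquad s\in[0,1],
\]
where $\lambda=\partial^2\kappa/\partial u^2$ as in Section~2. The essential input is the regularity of $\lambda$ in its first slot: by the Green's-kernel assumption~(1) one has $\lambda\in C(\Omega)$, and by assumption~(4) $\lambda$ agrees on $\Omega_j$ with a function $\lambda_j\in C^{r}(\Omega_j)\subseteq C^{1}(\Omega_j)$, $j=1,2$. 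Hence on the compact set $\{(s,t,u):0\le s,t\le 1,\ |u|\le\norm{\varphi}_\infty\}$ the one-sided $s$-derivatives of $\lambda$ are bounded, and since $\lambda$ is continuous across the diagonal $s=t$, the map $s\mapsto\lambda(s,t,u)$ is Lipschitz on $[0,1]$ with some constant $L$ not depending on $(t,u)$ in that set. Differencing $w$ at two points $s,s'$, bounding $|v(\mu_{q\nu}^j)|\le\norm{v}_\infty$, and using $nh=1$ to collapse the factor $\tfrac{h}{p}\sum_{j,q,\nu}|w_q|$ to $\sum_{q}|w_q|$, one gets $|w(s)-w(s')|\le L\bigl(\sum_q|w_q|\bigr)\norm{v}_\infty^2\,|s-s'|$, i.e.\ $w$ is Lipschitz with constant $C\norm{v}_\infty^2$.

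For the last step I would use that, for $r=1$, the orthonormal basis of $\mathcal{X}_n$ on $\Delta_j$ is the single constant $\varphi_{j,0}\equiv h^{-1/2}$, so $P_{n,j}w$ equals the weighted average $\frac{1}{p}\sum_{\nu,q}w_q\,w(\mu_{q\nu}^j)$ of the values of $w$ at the quadrature nodes $\mu_{q\nu}^j$ contained in $\Delta_j$ (the weights summing to $1$ by exactness of the basic rule on constants). Since any $s\in\Delta_j=[t_{j-1},t_j]$ and every such node differ by at most $h$, the Lipschitz bound yields $|(I-P_{n,j})w(s)|\le C'\norm{v}_\infty^2\,h$ for $s\in\Delta_j$, with $C'$ depending only on $L$ and the quadrature weights. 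Recalling that $(I-P_n)w$ restricted to $\Delta_j$ is $(I-P_{n,j})w$, taking the supremum over $j$ and over $s$ gives the asserted inequality with $C_9=C'$.

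I expect the one genuinely delicate point to be the uniform $s$-Lipschitz estimate for $\lambda$, precisely because $\lambda(\cdot,t,u)$ is only piecewise $C^1$, with a true corner at $s=t$ coming from assumption~(3); it is the global continuity of $\lambda$ that turns this piecewise regularity into honest Lipschitz continuity. It is worth stressing what the argument deliberately sidesteps: it does not invoke the projection bound \eqref{proj:error1}, since $w$ need not belong to $C^1[t_{j-1},t_j]$ — as $s$ sweeps across $\Delta_j$ it may cross quadrature nodes interior to $\Delta_j$, at which $w$ has corners — so only the Lipschitz continuity of $w$ together with the averaging character of the degree-zero projection is used, which is exactly why the hypothesis $r=1$ is imposed.
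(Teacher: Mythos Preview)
Your proof is correct and follows essentially the same line as the paper's: both express $(I-P_{n,j})w(s)$ for $w=\mathcal{K}_m''(\varphi)(v,v)$ as a weighted combination of differences $w(s)-w(\mu_{q\nu}^j)$ with $s,\mu_{q\nu}^j\in\Delta_j$, and bound each difference by $O(h)\,\|v\|_\infty^2$ using the first-variable regularity of $\lambda$. The only cosmetic difference is that the paper splits the inner quadrature sum defining $w$ into the term $k=j$ (bounded crudely by $2h\sup|\lambda_*|\,\|v\|_\infty^2$, since the diagonal corner may lie between $s$ and $\mu_{q\nu}^j$) and the terms $k\neq j$ (where the mean value theorem applies on a single smooth branch of $\lambda_*$), whereas you absorb both cases into a single global Lipschitz estimate for $s\mapsto\lambda(s,t,u)$---which is cleaner and gives a slightly tighter constant.
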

		\begin{proof} Given that $\mathcal{X}_n$ is the space of piecewise constant functions with respect to the partition \eqref{eq:partition1}. 
			Note that
			\begin{eqnarray*}
				\norm{(I -P_n)\mathcal{K}_m''(\varphi) (v, v)}_\infty
				& = & \max _{1 \leq j \leq n} \sup_{s \in [t_{j-1}, t_j]}
				\lvert (I -P_{n, j})\mathcal{K}_m''(\varphi) (v, v) (s) \rvert.
			\end{eqnarray*}
			Let $s \in [t_{j-1}, t_j]$. Since the Legendre polynomial of zero $L_0 (t) = 1$ for all $t \in [0, 1]$, we have from \eqref{eq:3.15}, 
			\begin{multline}\label{eqn:4.28}
				(I -P_n)\mathcal{K}_m''(\varphi) (v, v)(s) \\ = \frac{1}{p} \: \sum_{\nu =1}^p 	\sum_{q =1}^\rho w_q \left[\mathcal{K}_m''(\varphi) (v, v) (s)  - \mathcal{K}_m''(\varphi) (v, v)(t_{j-1} + \mu_{q \nu} h) \right].
			\end{multline}
			We also have
			{\small	\begin{multline*}
					\mathcal{K}_m''(\varphi) (v, v) (s)  - \mathcal{K}_m''(\varphi) (v, v)(t_{j-1} + \mu_{q \nu} h) \\ 
					=  \frac{h}{p} \sum_{k=1}^n  \sum_{q=1}^\rho \sum_{\nu =1}^p w_q \left[ \frac{\partial^2 \kappa}{\partial u^2} { \left( s,   \mu_{q \nu}^k, \varphi{\left( \mu_{q \nu}^k \right)}\right)} - \frac{\partial^2 \kappa}{\partial u^2} { \left( \mu_{q \nu}^j,   \mu_{q \nu}^k, \varphi{\left( \mu_{q \nu}^k \right)}\right)} \right] v^2{\left( \mu_{q \nu}^k \right)},
			\end{multline*}	}
			where $\mu_{q \nu}^j = t_{j-1} + \mu_{q \nu} h$.		
			For fixed $ s \in [0, 1]$, let
			\begin{equation}\nonumber
				\lambda_{*, s} (t) = \lambda_* (s, t) = \lambda (s, t, \varphi (t)) = \frac{\partial^2 \kappa}{\partial u^2} (s, t, \varphi(t)), \quad t \in [0, 1].
			\end{equation}
			Then, by \eqref{eq:dis_ip}
			\begin{align} \label{eqn:4.29}
				\mathcal{K}_m''(\varphi)(v, v)(s) & - \mathcal{K}_m''(\varphi) (v, v)(\mu_{q \nu}^j) 
				=  \sum_{k=1}^n  \left< \lambda_{*, s} - \lambda_{*, \mu_{q \nu}^i}, v^2\right>_{\Delta_k, m} 
				\nonumber \\
				&  = \sum_{ \stackrel {k =1} {k \neq j}}^n
				\inp {\lambda_{*,s} - \lambda_{*,\mu_{q \nu}^j } } {v^2}_{\Delta_k, m}
				+  \inp {\lambda_{*,s} - \lambda_{*, \mu_{q \nu}^j} } { v^2}_{\Delta_j, m}.
			\end{align}
			First consider the case when $k \neq j$. Applying Mean Value Theorem on the first component of $\lambda_{*}(\cdot, \cdot)$ in the interval $[s, \mu_{q \nu}^j]$, we obtain
			\begin{equation*}
				\inp {\lambda_{*,s} - \lambda_{*,\mu_{q \nu}^j} } {v^2}_{\Delta_k, m} = (s - \mu_{q \nu}^j) \inp {D^{(1, 0)}  \lambda_{*}  (\theta_{q \nu}^j, \cdot) } {v^2}_{\Delta_k, m},
			\end{equation*}
			for some $\theta_{q \nu}^j \in (t_{j - 1}, t_j)$, and the function $\displaystyle{D^{(1, 0)} \lambda_{*}(s,t)}, t \in [t_{k-1}, t_k]$ is given by
			\begin{equation*}
				D^{(1, 0)} \lambda_{*}(s,t) = \left\{ {\begin{array}{ll}	\vspace{1.5mm}
						D^{(1, 0)} \lambda_{1, *} (s, t) = \frac{\partial}{\partial s}\lambda_1 (s, t, \varphi (t)), \;\;\; 0 \leq t \leq s \leq 1, \\ 
						D^{(1, 0)} \lambda_{2, *} (s, t) = \frac{\partial}{\partial s}\lambda_2 (s, t, \varphi (t)), \;\;\; 0 \leq s \leq t \leq 1.
				\end{array}}\right.
			\end{equation*}
			Therefore, for $k \neq j$,
			\begin{eqnarray*}
				\left \lvert \inp {\lambda_{*,s} - \lambda_{*,\mu_{q \nu}^j} } {v^2}_{\Delta_k, m} \right \rvert &\leq & \left \lvert s - \mu_{q \nu}^j \right \rvert \left( \sup_{s \neq t} \left \lvert D^{(1, 0)} \lambda_{*} (s, t)
				\right \rvert \right) \norm{v}^2_\infty h \\
				& \leq & \left( \sup_{s \neq t} \left \lvert D^{(1, 0)} \lambda_{*} (s, t)
				\right \rvert \right) \norm{v}^2_\infty h^2,
			\end{eqnarray*}
			where  \begin{align*}
				&  \sup_{s \neq t} \lvert D^{(1, 0)} \lambda_{*} (s, t)
				\rvert \\
				& =\max\left\{ \sup_{0\leq t<s \leq 1} \lvert D^{(1,0)}\lambda_{1,*}{(s,t)}\rvert~,\sup_{0\leq s < t \leq 1} \lvert D^{(1,0)}\lambda_{2,*}{(s,t)}\rvert \right\} \\
				& =\max\left\{ \sup_{\stackrel{0\leq t < s \leq 1}{ |u| \leq \norm{\varphi}_\infty}} \lvert  D^{(1,0, 2)}\kappa_{1}{(s,t,u)} \rvert ~, \sup_{\stackrel{0\leq s < t \leq 1}{ |u| \leq \norm{\varphi}_\infty}} \lvert D^{(1,0,2)}\kappa_{2}{(s,t,u)}\rvert \right\}.
			\end{align*}
			On the other hand,
			\begin{align*}
				\lvert \inp {\lambda_{*,s} - \lambda_{*, \mu_{q \nu}^j} } { v^2}_{\Delta_j, m} \rvert \leq 2 \left( \sup_{0 \leq s, t \leq 1} \lvert \lambda_{*} (s, t)
				\rvert \right) \norm{v}^2_\infty h,
			\end{align*}
			where $ \displaystyle{\sup_{0 \leq s, t \leq 1} \lvert \lambda_{*} (s, t)
				\rvert = \sup_{\stackrel{0\leq s , t \leq 1}{ |u| \leq \norm{\varphi}_\infty}} \lvert D^{(0,0,2)}\kappa{(s,t,u)}\rvert}$.
			Then, from \eqref{eqn:4.29} we obtain
			\begin{multline*}
				\lvert \mathcal{K}_m''(\varphi)(v, v)(s)  - \mathcal{K}_m''(\varphi) (v, v)(\mu_{q \nu}^j)\rvert \\ \leq \left( \sum_{ \stackrel {k =1} {k \neq j}}^n
				\left( \sup_{s \neq t} \lvert D^{(1, 0)} \lambda_{*} (s, t) \vert \right) \norm{v}^2_\infty h^2 \right)
				+   2 \left( \sup_{0 \leq s, t \leq 1} \lvert \lambda_{*} (s, t)
				\rvert \right) \norm{v}^2_\infty h.
			\end{multline*}
			It follows that
			\begin{eqnarray*}
				\lvert \mathcal{K}_m''(\varphi)(v, v)(s)  - \mathcal{K}_m''(\varphi) (v, v)(\mu_{q \nu}^j)\rvert  \leq C_9 \norm{v}^2 h,
			\end{eqnarray*}
			where $\displaystyle{C_9 = \left( \sup_{s \neq t} \vert D^{(1, 0)} \lambda_{*} (s, t)\rvert \right) + 2 \left( \sup_{0 \leq s, t \leq 1} \lvert \lambda_{*} (s, t)
				\rvert \right)}$.\\
			The result now follows from \eqref{eqn:4.28} and the above estimate.
		\end{proof}
		
		\begin{Corollary}\label{cor:4.5}
			Let $v \in \X$. If $r = 1$, that is, when the range of $P_n$ is the space of piecewise polynomials of degree zero, then
			\begin{equation*}
				\norm{(I -P_n)\mathcal{K}_m'(\varphi) v}_\infty \leq C_{10} h \norm{v}_\infty,
			\end{equation*}
			where $C_{10}$ is a constant independent of $h$.
		\end{Corollary}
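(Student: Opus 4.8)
The plan is to follow the proof of Lemma~\ref{lem:4.5} almost verbatim, replacing the bilinear quantity $\mathcal{K}_m''(\varphi)(v,v)$ by the linear one $\mathcal{K}_m'(\varphi)v$ and the function $\lambda_*$ by $\ell_*(s,t)=\ell(s,t,\varphi(t))=\frac{\partial\kappa}{\partial u}(s,t,\varphi(t))$. First I would fix $j\in\{1,\dots,n\}$ and $s\in[t_{j-1},t_j]$. Since $r=1$, the Legendre polynomial of degree zero is $L_0\equiv 1$, so \eqref{eq:3.15} gives
\[
(I-P_n)\mathcal{K}_m'(\varphi)v(s)=\frac{1}{p}\sum_{\nu=1}^p\sum_{q=1}^\rho w_q\Big[\mathcal{K}_m'(\varphi)v(s)-\mathcal{K}_m'(\varphi)v(t_{j-1}+\mu_{q\nu}h)\Big],
\]
and it therefore suffices to bound $\lvert\mathcal{K}_m'(\varphi)v(s)-\mathcal{K}_m'(\varphi)v(\mu_{q\nu}^j)\rvert$ by a constant times $\norm{v}_\infty h$, uniformly in $s,j,\nu,q$, where $\mu_{q\nu}^j=t_{j-1}+\mu_{q\nu}h$.

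Writing the Nystr\"om derivative through the discrete inner product exactly as in the proof of Lemma~\ref{lem:4.4}, so that $\mathcal{K}_m'(\varphi)v(s)=\sum_{k=1}^n\inp{\ell_{*,s}}{v}_{\Delta_k,m}$, one obtains
\[
\mathcal{K}_m'(\varphi)v(s)-\mathcal{K}_m'(\varphi)v(\mu_{q\nu}^j)=\sum_{\substack{k=1\\k\neq j}}^n\inp{\ell_{*,s}-\ell_{*,\mu_{q\nu}^j}}{v}_{\Delta_k,m}+\inp{\ell_{*,s}-\ell_{*,\mu_{q\nu}^j}}{v}_{\Delta_j,m}.
\]
For $k\neq j$ both $s$ and $\mu_{q\nu}^j$ lie in $\Delta_j$ while the quadrature nodes of $\Delta_k$ lie outside $\Delta_j$, so the segment joining $s$ and $\mu_{q\nu}^j$ never meets the diagonal $\{s=t\}$ for a node $t\in\Delta_k$; hence the Mean Value Theorem may be applied to the first argument of $\ell_*$ using one of the smooth branches $\ell_{1,*}$ or $\ell_{2,*}$ (these are $C^r$ on the closures $\overline{\Omega}_j$ by the structural assumptions on $\kappa$, even though $\partial_s\ell$ itself jumps across $s=t$). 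Using $\lvert s-\mu_{q\nu}^j\rvert\leq h$ and the fact that the total weight of $\inp{\cdot}{\cdot}_{\Delta_k,m}$ on each subinterval equals $\tilde h\sum_{\nu}\sum_q w_q=h$, this yields $\lvert\inp{\ell_{*,s}-\ell_{*,\mu_{q\nu}^j}}{v}_{\Delta_k,m}\rvert\leq\big(\sup_{s\neq t}\lvert D^{(1,0)}\ell_*(s,t)\rvert\big)\norm{v}_\infty h^2$, and summing the at most $n=1/h$ such terms gives an $O(\norm{v}_\infty h)$ contribution. For the single remaining term $k=j$ I would just use boundedness of $\ell_*$: $\lvert\inp{\ell_{*,s}-\ell_{*,\mu_{q\nu}^j}}{v}_{\Delta_j,m}\rvert\leq 2\big(\sup_{0\leq s,t\leq1}\lvert\ell_*(s,t)\rvert\big)\norm{v}_\infty h$.

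Adding the two contributions gives $\lvert\mathcal{K}_m'(\varphi)v(s)-\mathcal{K}_m'(\varphi)v(\mu_{q\nu}^j)\rvert\leq C_{10}\norm{v}_\infty h$ with $C_{10}=\sup_{s\neq t}\lvert D^{(1,0)}\ell_*(s,t)\rvert+2\sup_{0\leq s,t\leq1}\lvert\ell_*(s,t)\rvert$, which is finite since these suprema are controlled by $\sup\lvert D^{(1,0,1)}\kappa\rvert$ and $\sup\lvert D^{(0,0,1)}\kappa\rvert$ over the relevant compact set. Substituting this back into the displayed identity for $(I-P_n)\mathcal{K}_m'(\varphi)v(s)$ and taking $\sup_s$ completes the argument. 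The only step that genuinely needs care — the exact analogue of the delicate point in Lemma~\ref{lem:4.5} — is the diagonal jump discontinuity of $\partial_s\ell$: one must verify that for $k\neq j$ the MVT segment stays strictly on one side of $s=t$ so that a smooth branch governs the estimate, while the possibly singular interval $k=j$ is absorbed by the crude bound coming from the interval length alone.
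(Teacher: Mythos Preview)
Your proposal is correct and is exactly the approach the paper intends: the paper's proof of Corollary~\ref{cor:4.5} is simply the one-line remark that it is ``similar to that of Lemma~\ref{lem:4.5}'', and what you have written is precisely that analogue, with $\lambda_*$ replaced by $\ell_*$ and $\mathcal{K}_m''(\varphi)(v,v)$ replaced by $\mathcal{K}_m'(\varphi)v$. Your care about the diagonal jump (handling $k\neq j$ via the Mean Value Theorem on the appropriate smooth branch and absorbing $k=j$ by the crude $O(h)$ bound) mirrors the paper's treatment in Lemma~\ref{lem:4.5} exactly.
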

		\begin{proof}
			The proof is similar to that of Lemma \ref{lem:4.5}.
		\end{proof}

		\begin{proposition}\label{prop:5}Let $t_i$ be any point of the partition $\Delta^{(n)}$ defined by \eqref{eq:partition1}. Then
			\begin{multline*}
				\mathcal{L}_m (I -P_n) \left[\mathcal{K}_m(z_n^G) -  \mathcal{K}_m(\varphi_m) - {\mathcal{K}}_m'(\varphi_m) (z_n^G - \varphi_m) \right](t_i) \\ = \left\{ {\begin{array}{ll}	\vspace{1mm}
						O\left( h^4 \right), \hspace*{2.65cm}r = 1, \\ 
						O\left(\max\left\{ h^{3r}, h^r\tilde{h}^4 \right\}  \right), \quad r \geq 2.
				\end{array}}\right.
			\end{multline*}
		\end{proposition}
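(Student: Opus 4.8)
The plan is to substitute into $E := \mathcal{K}_m(z_n^G) - \mathcal{K}_m(\varphi_m) - \mathcal{K}_m'(\varphi_m)(z_n^G - \varphi_m)$ the generalized Taylor expansion of $\mathcal{K}_m$ about $\varphi_m$ already used in the proof of Proposition \ref{prop:4} (equation \eqref{eq:4.41}),
\begin{multline*}
	E = \frac{1}{2}\,\mathcal{K}_m''(\varphi_m)(z_n^G - \varphi_m)^2 \\
	{}+ \frac{1}{6}\,\mathcal{K}_m^{(3)}(\varphi_m)(z_n^G - \varphi_m)^3 + \mathcal{R}_{4,m}(z_n^G - \varphi_m),
\end{multline*}
with $\norm{\mathcal{R}_{4,m}(z_n^G - \varphi_m)}_\infty = O(h^{4r})$, and then to estimate $\mathcal{L}_m(I - P_n)(\,\cdot\,)(t_i)$ termwise. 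The decisive tool is Corollary \ref{cor:4.2}, which gives, for every $v \in \X$, the bound $|\mathcal{L}_m(I - P_n)v(t_i)| \le C_8\norm{(I - P_n)v}_\infty h^{r}$ --- one extra factor $h^{r}$ beyond a plain operator-norm estimate; this is precisely what keeps the quadratic term, which in norm is only $O(h^{2r})$, from contributing at the order $h^{2r}$ of the leading coefficient $\gamma$ in \eqref{eq:final}. Throughout I would use that, by \eqref{eq:discrete_Gal} and \eqref{Gr_Ny_error}, $\norm{z_n^G - \varphi_m}_\infty \le \norm{z_n^G - \varphi}_\infty + \norm{\varphi - \varphi_m}_\infty = O(\max\{h^{r},\tilde{h}^2\})$.

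The remainder and cubic terms are routine. For the remainder, $|\mathcal{L}_m(I - P_n)\mathcal{R}_{4,m}(z_n^G - \varphi_m)(t_i)| \le \norm{\mathcal{L}_m}\norm{I - P_n}\,O(h^{4r}) = O(h^{4r})$. For the cubic term, $\norm{(I - P_n)\mathcal{K}_m^{(3)}(\varphi_m)(z_n^G - \varphi_m)^3}_\infty \le (1+\norm{P_n})\norm{\mathcal{K}_m^{(3)}(\varphi_m)}\,\norm{z_n^G - \varphi_m}_\infty^3 = O(\max\{h^{3r},\tilde{h}^6\})$, so by Corollary \ref{cor:4.2} its contribution is $O(h^{r}\max\{h^{3r},\tilde{h}^6\})$, which under the standing hypothesis $\tilde{h}^2 \le h^{r}$ is $O(h^{4r})$. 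Both are absorbed in the asserted bound ($O(h^4)$ when $r=1$, $O(\max\{h^{3r},h^{r}\tilde{h}^4\})$ when $r\ge 2$).

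The quadratic term is the heart of the matter. For $r \ge 2$ it is immediate: $\norm{(I - P_n)\mathcal{K}_m''(\varphi_m)(z_n^G - \varphi_m)^2}_\infty = O(\norm{z_n^G - \varphi_m}_\infty^2) = O(\max\{h^{2r},\tilde{h}^4\})$, and Corollary \ref{cor:4.2} turns this into $O(h^{r}\max\{h^{2r},\tilde{h}^4\}) = O(\max\{h^{3r},h^{r}\tilde{h}^4\})$, exactly as claimed. For $r=1$ this direct argument is short by one power of $h$ --- the plain bound being only $O(h^2)$, Corollary \ref{cor:4.2} would yield only $O(h^3)$. The fix is to split $\mathcal{K}_m''(\varphi_m) = \mathcal{K}_m''(\varphi) + [\mathcal{K}_m''(\varphi_m) - \mathcal{K}_m''(\varphi)]$. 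For the first piece, since the range of $P_n$ is then the space of piecewise constants, Lemma \ref{lem:4.5} applies and gives $\norm{(I - P_n)\mathcal{K}_m''(\varphi)(z_n^G - \varphi_m)^2}_\infty \le C_9 h\,\norm{z_n^G - \varphi_m}_\infty^2 = O(h^3)$; for the second piece, Lemma \ref{lip_2} together with boundedness of $I - P_n$ gives $\norm{(I - P_n)[\mathcal{K}_m''(\varphi_m) - \mathcal{K}_m''(\varphi)](z_n^G - \varphi_m)^2}_\infty = O(\tilde{h}^2 h^2) = O(h^3)$. Hence $\norm{(I - P_n)\mathcal{K}_m''(\varphi_m)(z_n^G - \varphi_m)^2}_\infty = O(h^3)$, and Corollary \ref{cor:4.2} finally gives a contribution $O(h^4)$. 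Adding the three contributions proves the proposition.

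The step I expect to be the obstacle is the $r=1$ quadratic term. A direct estimate there loses a power of $h$, and recovering it is not a matter of more bookkeeping but of genuinely using the structure of $\mathcal{K}_m''(\varphi)$ acting on a projection error: for $r=1$ the function $\mathcal{K}_m''(\varphi)(v,v)$ is Lipschitz in its evaluation variable, so that $(I - P_n)\mathcal{K}_m''(\varphi)(v,v)$ is $O(h\norm{v}_\infty^2)$ rather than the naively expected $O(\norm{v}_\infty^2)$ --- this is exactly Lemma \ref{lem:4.5}. Once that power is secured, Corollary \ref{cor:4.2} supplies the remaining $h$, carrying the order from that of $\mathcal{K}_m''(\varphi_m)(z_n^G - \varphi_m)^2$ down to the order claimed; the only residual care is to keep the two discretisation scales $h$ and $\tilde{h}$ apart, invoking $\tilde{h}^2 \le h^{r}$ at each point where they meet.
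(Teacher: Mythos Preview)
Your proposal is correct and follows essentially the same route as the paper: Taylor-expand the bracket, dispose of the higher-order remainder trivially, use Corollary~\ref{cor:4.2} to gain the extra $h^r$ on the quadratic term for $r\ge 2$, and for $r=1$ split $\mathcal{K}_m''(\varphi_m)=\mathcal{K}_m''(\varphi)+[\mathcal{K}_m''(\varphi_m)-\mathcal{K}_m''(\varphi)]$, invoking Lemma~\ref{lem:4.5} on the first piece and Lemma~\ref{lip_2} on the second. The only cosmetic differences are that the paper expands to second order with remainder $\mathcal{R}_{3,m}$ rather than third order with $\mathcal{R}_{4,m}$, and that for the $r=1$ term the paper writes out the discrete inner-product identity underlying Corollary~\ref{cor:4.2} explicitly rather than citing it; neither affects the argument.
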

		\begin{proof}
			Generalized Taylor's series expansion gives
			\begin{multline}\label{eqn:4.30}
				\mathcal{L}_m (I -P_n) \left[\mathcal{K}_m(z_n^G) -  \mathcal{K}_m(\varphi_m) - \mathcal{K}_m'(\varphi_m) (z_n^G - \varphi_m) \right] \\ = \frac{1}{2}\mathcal{L}_m (I -P_n)\mathcal{K}_m''(\varphi_m) (z_n^G - \varphi_m)^2 + \mathcal{L}_m (I -P_n)\mathcal{R}_{3, m} \left( z_n^G - \varphi_m \right),
			\end{multline}	
			where
			$$\mathcal{R}_{3, m} \left( z_n^G - \varphi_m \right) =  \int_0^1 \frac{(1 - \theta)^2}{2!}\mathcal{K}_m^{(3)} \left(\varphi_m + \theta (z_n^G - \varphi_m) \right) (z_n^G - \varphi_m)^3
			\; d \theta.$$		
			It follows that
			\begin{eqnarray*}
				\norm{\mathcal{R}_{3, m} \left( z_n^G - \varphi_m \right)}_\infty \leq \frac{1}{6} \left( \sup_{\stackrel {s, t \in [0, 1]}{\lvert u \rvert \leq \|\varphi \|_\infty + \epsilon}} 	\left\lvert  \frac{\partial^3 \kappa}{\partial u^3}(s, t, u) \right\rvert \right) \norm{z_n^G - \varphi_m}^3_\infty.
			\end{eqnarray*}
			Therefore, by \eqref{eq:discrete_Gal}
			\begin{equation*}
				\norm{\mathcal{R}_{3, m} \left( z_n^G - \varphi_m \right)}_\infty = O\left( \max\left\{ h^{3r}, \tilde{h}^6 \right\}  \right).
			\end{equation*}
			Since $\norm{I - P_n} \leq 1 + \norm{P_n} < \infty$, from Corollary \ref{cor:4.2}, it is easy to see that
			\begin{align}\label{eqn:4.31}
				\mathcal{L}_m (I -P_n)\mathcal{R}_{3, m} \left( z_n^G - \varphi_m \right)(t_i) = O\left(\max\left\{ h^{4r}, h^r\tilde{h}^6 \right\}  \right).
			\end{align}
			First consider the case $r\geq 2$. Since $\norm{\mathcal{K}_m''(\varphi_m)} < \infty$ and $\norm{I - P_n}_\infty < \infty$, by  \eqref{eq:discrete_Gal} and the Corollary \ref{cor:4.2}, we have
			\begin{align}\nonumber
				\frac{1}{2} \mathcal{L}_m (I -P_n)\mathcal{K}_m''(\varphi_m) (z_n^G - \varphi_m)^2 (t_i) = O\left(\max\left\{ h^{3r}, h^r\tilde{h}^4 \right\}  \right), \quad r \geq 2.
			\end{align}
			When $r = 1$, we write
			{\small \begin{multline*}
					\mathcal{L}_m (I -P_n)\mathcal{K}_m''(\varphi_m) (z_n^G - \varphi_m)^2  \\ = \mathcal{L}_m (I -P_n)\left[ \mathcal{K}_m''(\varphi_m) - \mathcal{K}_m''(\varphi) \right] (z_n^G - \varphi_m)^2  + \mathcal{L}_m (I -P_n)\mathcal{K}_m''(\varphi) (z_n^G - \varphi_m)^2 .
			\end{multline*}}
			By \eqref{eq:discrete_Gal}, \eqref{Gr_Ny_error} and the Lemma \ref{lip_2}, we have
			\begin{equation*}
				\mathcal{L}_m (I -P_n)\left[ \mathcal{K}_m''(\varphi_m) - \mathcal{K}_m''(\varphi) \right] (z_n^G - \varphi_m)^2 = O\left( h^4 \right).
			\end{equation*}
			On the other hand 
			{\small	\begin{equation*}
					\mathcal{L}_m (I -P_n)\mathcal{K}_m''(\varphi) (z_n^G - \varphi_m)^2 (t_i)  = \sum_{j= 1}^n \inp { \ell_{m, t_i}} {(I - P_{n,j})\mathcal{K}_m''(\varphi) (z_n^G - \varphi_m)^2}_{\Delta_j, m}.
			\end{equation*}}
			Since $I - P_{n,j}$ is self-adjoint,
			\begin{multline*}
				\mathcal{L}_m (I -P_n)\mathcal{K}_m''(\varphi) (z_n^G - \varphi_m)^2 (t_i) \\ = \sum_{j= 1}^n \inp { (I - P_{n,j})\ell_{m, t_i}} {(I - P_{n,j})\mathcal{K}_m''(\varphi) (z_n^G - \varphi_m)^2}_{\Delta_j, m}.
			\end{multline*}
			It follows that
			\begin{multline*}
				\max_{0 \leq i \leq n} \left \lvert \mathcal{L}_m (I -P_n)\mathcal{K}_m''(\varphi) (z_n^G - \varphi_m)^2 (t_i) \right \rvert  \\ \leq \sum_{j= 1}^n \norm{(I - P_{n,j})\ell_{m, t_i}}_{\Delta_j, \infty} \norm{(I - P_{n,j})\mathcal{K}_m''(\varphi) (z_n^G - \varphi_m)^2}_{\Delta_j, \infty} h.
			\end{multline*}
			By Corollary \ref{cor:4.2} and Lemma \ref{lem:4.5}, we obtain
			\begin{align*}
				\max_{0 \leq i \leq n} \lvert \mathcal{L}_m (I -P_n)\mathcal{K}_m''(\varphi) (z_n^G - \varphi_m)^2 (t_i) \rvert = O\left( h^4 \right), \quad r = 1.
			\end{align*}
			Therefore
			\begin{align}\label{eqn:4.32}
				\frac{1}{2} \mathcal{L}_m (I -P_n)\mathcal{K}_m''(\varphi_m) (z_n^G - \varphi_m)^2 (t_i) = \left\{ {\begin{array}{ll}	\vspace{1mm}
						O\left( h^4 \right), \hspace*{2.62cm}r = 1, \\ 
						O\left(\max\left\{ h^{3r}, h^r\tilde{h}^4 \right\}  \right), \quad r \geq 2.
				\end{array}}\right.
			\end{align}
			Then combing \eqref{eqn:4.30}, \eqref{eqn:4.31} and \eqref{eqn:4.32}, we obtain
			\begin{multline*}
				\mathcal{L}_m (I -P_n) \left[\mathcal{K}_m(z_n^G) -  \mathcal{K}_m(\varphi_m) - {\mathcal{K}}_m'(\varphi_m) (z_n^G - \varphi_m) \right](t_i) \\ = \left\{ {\begin{array}{ll}	\vspace{1mm}
						O\left( h^4 \right), \hspace*{2.62cm}r = 1, \\ 
						O\left(\max\left\{ h^{3r}, h^r\tilde{h}^4 \right\}  \right), \quad r \geq 2.
				\end{array}}\right.
			\end{multline*}
			This follows the result.
		\end{proof}

		We quote the following result from By \cite[Proposition 1, Proposition 6]{RPK-GR3}, which will be used in the next proposition.
		\begin{equation}\label{eqn:4.33}
			\norm{(I -P_n)\mathcal{K}_m'(\varphi)(I -P_n)\varphi}_\infty  = \left\{ {\begin{array}{ll}	\vspace{1mm}
					O\left( h^3 \right), \hspace*{7mm}r = 1, \\ 
					O\left( h^{r+2} \right), \quad r \geq 2.
			\end{array}}\right.
		\end{equation}

		\begin{proposition}\label{prop:6} If $\varphi_m$ and $z_n^G$ are respectively the Nystr\"om and the discrete Galerkin approximation of $\varphi$, then
			\begin{equation*}
				\mathcal{L}_m (I -P_n){\mathcal{K}}_m'(\varphi_m) (z_n^G - \varphi_m)(t_i) = O \left(\max\left\{ h^{2r+2}, \tilde{h}^2 \right\}\right).
			\end{equation*}
		\end{proposition}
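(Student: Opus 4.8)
The plan is to reduce the quantity, via the identity \eqref{eq:4.32}, to expressions of the form $\mathcal{L}_m(I-P_n)(\,\cdot\,)(t_i)$ that are controlled by Corollary \ref{cor:4.2}, and then to bound the remaining sup-norms using the consistency estimates already available; the extra gain needed in the case $r=1$ will come from Corollary \ref{cor:4.5} and from \eqref{eqn:4.33}. Substituting $z_n^G - \varphi_m = P_n(z_n^S - \varphi_m) - (I-P_n)\varphi_m$ from \eqref{eq:4.32} gives
\begin{multline*}
\mathcal{L}_m (I -P_n){\mathcal{K}}_m'(\varphi_m) (z_n^G - \varphi_m) \\ = \mathcal{L}_m (I -P_n){\mathcal{K}}_m'(\varphi_m) P_n(z_n^S - \varphi_m) \; - \; \mathcal{L}_m (I -P_n){\mathcal{K}}_m'(\varphi_m) (I-P_n)\varphi_m ,
\end{multline*}
and the two terms on the right are treated separately. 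In both I would use the splitting ${\mathcal{K}}_m'(\varphi_m) = {\mathcal{K}}_m'(\varphi) + [{\mathcal{K}}_m'(\varphi_m) - {\mathcal{K}}_m'(\varphi)]$, where by \eqref{eq:Ny3} the bracketed operator has norm $O(\tilde{h}^2)$; since Corollary \ref{cor:4.2} bounds $\lvert\mathcal{L}_m(I-P_n)v(t_i)\rvert$ by $C_8 h^r\norm{(I-P_n)v}_\infty$, every contribution containing this bracket is $O(h^r\tilde{h}^2)=O(\tilde{h}^2)$ and may be set aside.

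For the first term, set $g = P_n(z_n^S - \varphi_m)\in\mathcal{X}_n$; combining \eqref{eq:discrete_it_Gal}, \eqref{Gr_Ny_error} and $\norm{P_n}<\infty$ one has $\norm{g}_\infty = O(\max\{h^{r+2},\tilde{h}^2\})$ for $r\ge2$ and $\norm{g}_\infty = O(\max\{h^{2},\tilde{h}^2\})$ for $r=1$. After discarding the ${\mathcal{K}}_m'$-perturbation it remains to estimate $\mathcal{L}_m(I-P_n){\mathcal{K}}_m'(\varphi)g(t_i)$, which Corollary \ref{cor:4.2} bounds by $C_8 h^r\norm{(I-P_n){\mathcal{K}}_m'(\varphi)g}_\infty$. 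For $r\ge2$ the crude estimate $\norm{(I-P_n){\mathcal{K}}_m'(\varphi)g}_\infty = O(\norm{g}_\infty)$ already yields $O(\max\{h^{2r+2},\tilde{h}^2\})$; for $r=1$ this falls short by one power of $h$, so there I would instead invoke Corollary \ref{cor:4.5} to get $\norm{(I-P_n){\mathcal{K}}_m'(\varphi)g}_\infty = O(h\norm{g}_\infty)$, which makes the term $O(\max\{h^4,\tilde{h}^2\})$.

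For the second term, write $(I-P_n)\varphi_m = (I-P_n)\varphi + (I-P_n)(\varphi_m-\varphi)$, the second summand being $O(\tilde{h}^2)$ in sup-norm by \eqref{Gr_Ny_error}; the contributions from $(I-P_n)(\varphi_m-\varphi)$ and from the ${\mathcal{K}}_m'$-perturbation are $O(h^r\tilde{h}^2)=O(\tilde{h}^2)$ after Corollary \ref{cor:4.2}. The one remaining piece is $\mathcal{L}_m(I-P_n){\mathcal{K}}_m'(\varphi)(I-P_n)\varphi(t_i)$, which Corollary \ref{cor:4.2} bounds by $C_8 h^r\norm{(I-P_n){\mathcal{K}}_m'(\varphi)(I-P_n)\varphi}_\infty$, and this is $O(h^{2r+2})$ by \eqref{eqn:4.33}. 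Adding the estimates for the two terms yields $O(\max\{h^{2r+2},\tilde{h}^2\})$ at $t_i$, as claimed (under the standing smoothness assumption $\varphi\in C^{r+2}[0,1]$ needed for \eqref{eq:discrete_it_Gal} and \eqref{eqn:4.33}).

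The main obstacle is the case $r=1$, where the target order $h^{2r+2}=h^4$ is genuinely better than what a plain operator-norm estimate on $(I-P_n){\mathcal{K}}_m'(\varphi)$ provides, so the argument has to exploit the sharpened bounds of Corollary \ref{cor:4.5} and of \eqref{eqn:4.33} to recover the missing power of $h$. The rest is bookkeeping: checking that every cross term and every $\varphi_m$-versus-$\varphi$ correction is absorbed into $O(\tilde{h}^2)$ (using $\tilde{h}^2\le h^r$ and $\tilde{h}\le h$), and that $\mathcal{L}_m$, ${\mathcal{K}}_m'(\varphi_m)$ and $P_n$ are uniformly bounded in $n$ and $m$.
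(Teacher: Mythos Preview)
Your proof is correct and follows essentially the same approach as the paper: both split off the perturbation $[\mathcal{K}_m'(\varphi_m)-\mathcal{K}_m'(\varphi)]$ via \eqref{eq:Ny3}, apply Corollary~\ref{cor:4.2} to gain $h^r$, and then use \eqref{eqn:4.33} for the $(I-P_n)\varphi$ piece and Corollary~\ref{cor:4.5} to recover the missing factor of $h$ in the $r=1$ case. The only cosmetic difference is that the paper decomposes $z_n^G-\varphi_m$ directly into the three pieces $P_n(z_n^S-\varphi)$, $-(I-P_n)\varphi$, $(\varphi-\varphi_m)$, whereas you start from \eqref{eq:4.32} and then split $\varphi_m$ into $\varphi+(\varphi_m-\varphi)$; the resulting estimates are identical.
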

		\begin{proof}
			Adding and subtracting $\mathcal{K}_m'(\varphi)$, we have
			\begin{multline*}
				\mathcal{L}_m (I -P_n){\mathcal{K}}_m'(\varphi_m) (z_n^G - \varphi_m) \\= \mathcal{L}_m (I -P_n)\left[\mathcal{K}_m'(\varphi_m) - \mathcal{K}_m'(\varphi)\right] (z_n^G - \varphi_m) + \mathcal{L}_m (I -P_n)\mathcal{K}_m'(\varphi) (z_n^G - \varphi_m). 
			\end{multline*}
			Then, using \eqref{eq:discrete_Gal}, \eqref{eq:Ny3} and the Corollary \ref{cor:4.2}, we obtain for $r \geq 1$,
			\begin{multline}\label{eqn:4.34}
				\left \lvert \mathcal{L}_m (I -P_n)\left[\mathcal{K}_m'(\varphi_m) - \mathcal{K}_m'(\varphi)\right] (z_n^G - \varphi_m)(t_i) \right \rvert \\ \leq C_4 C_8 \left( 1 + \norm{P_n}\right) h^r \tilde{h}^2 \left(\max\left\{ h^{r}, \tilde{h}^2 \right\}  \right)
			\end{multline}
			Note that 
			\begin{align*}
				z_n^G - \varphi_m  = P_n z_n^S - \varphi_m
				& = P_n z_n^S - P_n \varphi + P_n \varphi - \varphi + \varphi - \varphi_m \\
				& = P_n \left( z_n^S - \varphi \right) - \left( I - P_n \right)\varphi + \left( \varphi - \varphi_m \right).
			\end{align*}
			Then
			\begin{align*}
				\mathcal{L}_m (I -P_n)\mathcal{K}_m'(\varphi) (z_n^G - \varphi_m) & = \mathcal{L}_m (I -P_n)\mathcal{K}_m'(\varphi)P_n \left( z_n^S - \varphi \right) \\ & - \mathcal{L}_m (I -P_n)\mathcal{K}_m'(\varphi)\left( I - P_n \right)\varphi \\ &  + \mathcal{L}_m (I -P_n)\mathcal{K}_m'(\varphi)\left( \varphi - \varphi_m \right).
			\end{align*}
			By the Corollary \ref{cor:4.2}
			\begin{align*}
				\left \lvert \mathcal{L}_m (I -P_n)\mathcal{K}_m'(\varphi)P_n \left( z_n^S - \varphi \right)(t_i)  \right \rvert \leq C_8 \norm{\left( I - P_n \right)\mathcal{K}_m'(\varphi)P_n \left( z_n^S - \varphi \right)} h^r,
			\end{align*}
			then by \eqref{eq:discrete_it_Gal} and Corollary \ref{cor:4.5}, we obtain
			\begin{align*}
				\left \lvert \mathcal{L}_m (I -P_n)\mathcal{K}_m'(\varphi)P_n \left( z_n^S - \varphi \right)(t_i)  \right \rvert  & = O \left(\max\left\{ h^{2r+2}, \tilde{h}^2 \right\}\right), \quad \text{ for } r\geq 1.
			\end{align*}
			Also, the Corollary \ref{cor:4.2} and \eqref{eqn:4.33} implies
			\begin{align*}
				\mathcal{L}_m (I -P_n)\mathcal{K}_m'(\varphi)\left( I - P_n \right)\varphi (t_i) = 	O\left( h^{2r+2} \right), \quad \text{ for } r \geq 1.
			\end{align*}
			It is easy to see (from \eqref{Gr_Ny_error} and Corollary \ref{cor:4.2}) that
			\begin{equation} \nonumber
				\mathcal{L}_m (I -P_n)\mathcal{K}_m'(\varphi)\left( \varphi - \varphi_m \right) = 	O\left( h^{r} \tilde{h}^2 \right), \quad \text{ for } r \geq 1.
			\end{equation}
			Therefore, for $r \geq 1$,
			\begin{align}\nonumber
				\mathcal{L}_m (I -P_n)\mathcal{K}_m'(\varphi) (z_n^G - \varphi_m) (t_i) = O \left(\max\left\{ h^{2r+2}, \tilde{h}^2 \right\}\right).
			\end{align}
			Hence, the result follows from \eqref{eqn:4.34} and the above equation.
		\end{proof}

		We prove the main theorem as follows.
		
		\begin{theorem}\label{thm:1}
			Let $\mathcal{K}$ be the Urysohn integral operator with Green's function type kernel $\kappa$, defined by \eqref{eq:1.1}.   
			Let $\varphi$ be the unique solution of the equation (\ref{eq:main}). Assume that $1$ is not an eigenvalue of $\mathcal{K}' (\varphi).$  Let $\mathcal{X}_n$ be the space of piecewise polynomials of degree $\leq r -1 $ with respect to the partition $\Delta^{(n)} : = 0 = t_0 < t_1 < \cdots < t_n = 1 $ defined by \eqref{eq:partition1}. Let $P_n: L^\infty [0, 1] \rightarrow \mathcal{X}_n$ be the discrete orthogonal projection defined by \eqref{dop} and $\tilde{z}_n^S$ be the discrete iterated Galerkin approximation of $\varphi$. Then
			\begin{eqnarray}\nonumber
				\left( z_n^S - \varphi \right)(t_i) = \left[  \mathcal{E}_{2r}(\varphi)(t_i) + \frac{1}{2} \mathcal{T}(\varphi)(t_i) \right] h^{2r} + O \left(\max\left\{ h^{2r+2}, \tilde{h}^2 \right\}\right),
			\end{eqnarray}
			where the operators $\mathcal{E}_{2r}$ and $\mathcal{T}$ are respectively defined by \eqref{asy_exp1} and \eqref{eq:exp2}.
		\end{theorem}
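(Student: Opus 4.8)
The plan is to assemble the expansion by feeding the four propositions proved above into the exact decomposition \eqref{equation:1} of $z_n^S - \varphi_m$, evaluated at a partition point $t_i$. First I would write $z_n^S - \varphi = (z_n^S - \varphi_m) + (\varphi_m - \varphi)$ and observe that, by \eqref{Gr_Ny_error}, $\varphi_m - \varphi = O(\tilde h^2)$, which is absorbed into the claimed remainder $O(\max\{h^{2r+2},\tilde h^2\})$; thus it suffices to expand $(z_n^S - \varphi_m)(t_i)$.

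Next I would treat the four summands on the right-hand side of \eqref{equation:1} one by one. The first summand is handled by Proposition \ref{prop:4}, which gives $\frac{1}{2}\mathcal{T}(\varphi)(t_i)\,h^{2r} + O(\max\{h^{2r+2},\tilde h^2\})$. The last summand, $\mathcal{L}_m(I-P_n)\varphi_m$, is handled by Proposition \ref{prop:3}, which contributes the $\mathcal{E}_{2r}(\varphi)(t_i)\,h^{2r}$ term together with a remainder of the same admissible order; these are the only two summands producing a contribution at order $h^{2r}$. The second summand is controlled by Proposition \ref{prop:5}: it is $O(h^4)$ when $r=1$ and $O(\max\{h^{3r}, h^r\tilde h^4\})$ when $r\geq 2$, and invoking the standing hypothesis $\tilde h^2 \leq h^r$ one checks that $h^r\tilde h^4 \leq h^{3r} \leq h^{2r+2}$ for $r\geq 2$ while $h^4 = h^{2r+2}$ for $r=1$, so this summand is $O(\max\{h^{2r+2},\tilde h^2\})$ in every case. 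The third summand is $O(\max\{h^{2r+2},\tilde h^2\})$ directly by Proposition \ref{prop:6}. Collecting the two leading contributions and all the remainders, together with the $O(\tilde h^2)$ term coming from $\varphi_m-\varphi$, yields the asserted expansion $(z_n^S-\varphi)(t_i) = \left[\mathcal{E}_{2r}(\varphi)(t_i) + \frac{1}{2}\mathcal{T}(\varphi)(t_i)\right]h^{2r} + O(\max\{h^{2r+2},\tilde h^2\})$.

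Since every individual estimate has already been isolated in the lemmas, corollaries, and propositions above, there is no genuine obstacle remaining; the only points that require care are the arithmetic of the error exponents --- in particular verifying that the ``mixed'' terms appearing in the proofs of Propositions \ref{prop:5} and \ref{prop:6}, which pair the discrete Galerkin error $z_n^G - \varphi_m$ with $(I-P_n)\varphi_m$, are truly of order higher than $h^{2r}$ (this is where the superconvergence bound \eqref{eq:discrete_it_Gal} at the partition points and the hypothesis $\tilde h^2 \leq h^r$ enter) --- and keeping careful track of the signs carried by the four summands in \eqref{equation:1} when the $h^{2r}$-coefficient is collected.
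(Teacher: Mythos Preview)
Your proposal is correct and follows essentially the same route as the paper's own proof: write $z_n^S-\varphi=(z_n^S-\varphi_m)+(\varphi_m-\varphi)$, invoke \eqref{Gr_Ny_error} for the second piece, and then read off the four summands of \eqref{equation:1} via Propositions \ref{prop:3}--\ref{prop:6}. Your explicit check that the bound from Proposition \ref{prop:5} is absorbed into $O(\max\{h^{2r+2},\tilde h^2\})$ under the standing hypothesis $\tilde h^2\le h^r$ is a detail the paper leaves implicit, and your caution about the signs in \eqref{equation:1} is well placed.
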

		\begin{proof}
			We have from \eqref{equation:1}
			\begin{align*}
				z_n^S - \varphi = & \left[ I - {\mathcal{K}}_m'(\varphi_m) \right]^{-1} \left[\mathcal{K}_m(z_n^G) -  \mathcal{K}_m(\varphi_m) - {\mathcal{K}}_m'(\varphi_m) (z_n^G - \varphi_m) \right] \\
				& - \mathcal{L}_m (I -P_n) \left[\mathcal{K}_m(z_n^G) -  \mathcal{K}_m(\varphi_m) - {\mathcal{K}}_m'(\varphi_m) (z_n^G - \varphi_m) \right] \\
				& - \mathcal{L}_m (I -P_n){\mathcal{K}}_m'(\varphi_m) (z_n^G - \varphi_m) \\
				& - \mathcal{L}_m (I -P_n) \varphi_m \\
				& + \varphi_m - \varphi.
			\end{align*}
			The result now follows from \eqref{Gr_Ny_error}, Proposition \ref{prop:3}, Proposition \ref{prop:4}, Proposition \ref{prop:5} and Proposition \ref{prop:6}.
		\end{proof}
		
		We now apply Richardson extrapolation to obtain an approximation of $\varphi$ with higher order of convergence. Define
		$$ z_n^{EX} = \frac{2^{4r}z_{2n}^S - z_n^S}{2^{4r} - 1}.$$
		We choose the partitions $\Delta^{(m)}$ and $\Delta^{(n)}$ such that $m^2 \geq n^{2r+2}$. Then, it is easy to see from the Theorem \ref{thm:1}, that
		\begin{equation}\label{Ex_order}
			\left(z_n^{EX} - \varphi\right)(t_i) =  O\left(h^{2r+2}\right), \quad \text{ for all } i =1, 2, \dots, n.
		\end{equation}

		\section{Numerical results}
		
		For the numerical results, we consider the following example from \cite{Rpk-Aks}. 
		\noindent
		Consider
		\begin{equation}\label{eq:4.1}
			\varphi (s) - \int_0^1 \kappa (s, t) \left[ \psi  \left(t, \varphi (t) \right) \right] \: dt  = f(s), \;\;\; 0 \leq s \leq 1,
		\end{equation}
		where 
		$$\kappa (s,t) =\frac{1}{\gamma \sinh \gamma} \left\{ {\begin{array}{ll}
				\sinh \gamma s \: \sinh \gamma(1-t), & ~ 0 \leq t \leq s \leq 1, \\
				\gamma (1-s) \sinh \gamma t, & ~ 0 \leq s \leq t \leq 1,
		\end{array}}\right. $$
		with $\gamma = \sqrt{12},$
		and $$  \psi(t, \varphi(t))= \gamma^2 \varphi(t) - 2 \left( \varphi(t) \right)^3, \quad t \in [0,1].$$
		We have $f(s) =\frac{1}{ \sinh \gamma} \left \lbrace 2 \sinh \gamma(1-s) + \frac{2}{3} \sinh \gamma s \right \rbrace. $ The exact solution of \eqref{eq:4.1} is given by $$\varphi(s) =\frac{2}{2s+1}, \quad s \in [0, 1].$$

		Let $\mathcal{X}_n$ be the space of piecewise constant functions with respect to the uniform partition $\Delta^{(n)}$ of the interval $[0,1]$. Let $P_n: L^\infty[0,1] \rightarrow \mathcal{X}_n$ be the discrete orthogonal projection defined by \eqref{dop}. 
		
		Let $t_i=\frac{i-1}{20}, i=1,2,\ldots,21$ be the partition points with step size $h=\frac{1}{20}.$ The numerical quadrature is chosen to be the composite 2 point Gaussian quadrature rule with respect to partition $\Delta^{(m)}$ with $m = n^2$ subintervals. Then $\tilde{h} = h^2$. Therefore, it is expected from the Theorem \ref{thm:1} and equation \eqref{Ex_order}, that
		\begin{equation*}
			\epsilon_n^S(t_i) =\lvert \varphi(t_i) - z_n^S(t_i)\rvert = O\left( h^2 \right) ~ \text{ and } ~ \epsilon_n^{EX}(t_i) = \lvert \varphi(t_i) - z_n^{EX}(t_i) \rvert = O\left( h^4 \right),
		\end{equation*}
		where  $$z_n^{EX}(t_i) = \frac{4 z_{2n}^S(t_i) -z_n^S(t_i)}{3}.$$
		Let $\delta^S$ and $\delta^{EX}$ be respectively the orders of convergence of $z_n^S$ and $z_n^{EX}$ at the partition points. We expect $\delta^S = 2$ and $\delta^{EX} = 4.$
		
		\begin{center}
			
			Table 1
			
			\begin{tabular} {|c|c|c|c|c|c|}\hline
				$t_i$ & $\epsilon_n^S(t_i): n=20$ & ~~~ $\delta^S$ & $\epsilon_n^{EX}(t_i): n=20$ & ~~~ $\delta^{EX}$
				\\
				\hline
				$0.05$&  $  8.6 \times 10^{-3}	 $ &  $2.00$ &  $  2.98 \times 10^{-6}   $& $3.99$               \\
				$0.1$&  $  7.56 \times 10^{-3}	 $ &  $2.00$ &  $  2.23 \times 10^{-6}	 $& $3.99$               \\
				$0.15$&  $  6.79 \times 10^{-3}	 $ &  $2.00$ &  $  1.59 \times 10^{-6}	 $& $3.99$               \\
				$0.2$&  $  6.22 \times 10^{-3}	 $ &  $2.00$ &  $  1.09 \times 10^{-6}	 $& $3.97$               \\
				$0.25$&  $  5.78 \times 10^{-3}	 $ &  $2.00$ &  $  7.13 \times 10^{-7}	 $& $3.96$               \\
				$0.3$&  $  5.45 \times 10^{-3}	 $ &  $2.00$ &  $  4.46 \times 10^{-7}	 $& $3.94$               \\
				$0.35$&  $  5.19 \times 10^{-3}	 $ &  $2.00$ &  $  2.7 \times 10^{-7}	 $& $3.91$               \\
				$0.4$&  $  4.98 \times 10^{-3}	 $ &  $2.00$ &  $  1.69 \times 10^{-7}	 $& $3.86$               \\
				$0.45$&  $  4.82 \times 10^{-3}	 $ &  $2.00$ &  $  1.3 \times 10^{-7}	 $& $3.83$               \\
				$0.5$&  $  4.68 \times 10^{-3}	 $ &  $2.00$ &  $  1.41 \times 10^{-7}	 $& $3.85$               \\
				$0.55$&  $  4.55 \times 10^{-3}	 $ &  $2.00$ &  $  1.91 \times 10^{-7}	 $& $3.89$               \\
				$0.6$&  $  4.44 \times 10^{-3}	 $ &  $2.00$ &  $  2.72 \times 10^{-7}	 $& $3.93$               \\
				$0.65$&  $  4.33 \times 10^{-3}	 $ &  $2.00$ &  $  3.75 \times 10^{-7}	 $& $3.95$               \\
				$0.7$&  $  4.22 \times 10^{-3}	 $ &  $2.00$ &  $  4.95 \times 10^{-7}	 $& $3.97$               \\
				$0.75$&  $  4.10 \times 10^{-3}	 $ &  $2.00$ &  $  6.26 \times 10^{-7}	 $& $3.98$               \\
				$0.8$&  $  3.98 \times 10^{-3}	 $ &  $2.00$ &  $  7.6 \times 10^{-7}	 $& $3.99$               \\
				$0.85$&  $  3.84 \times 10^{-3}	 $ &  $2.00$ &  $  8.94 \times 10^{-7}	 $& $3.99$               \\
				$0.9$&  $  3.69 \times 10^{-3}	 $ &  $2.00$ &  $  1.02 \times 10^{-6}	 $& $3.99$               \\
				$0.95$&  $  3.52 \times 10^{-3}	 $ &  $2.00$ &  $  1.14 \times 10^{-6}	 $& $4$               \\
				\hline
				
			\end{tabular}
		\end{center}
		
		From the above table, it is clear that the obtained orders of convergence match well with the theoretical orders of convergence. Also the order of convergence of the extrapolated solution improves upon the discrete iterated Galerkin solution.


		
%
	\end{document}